\newcommand\set[1]{\left\{\sharp1\right\}}
\newtheorem{theorem}{Theorem}[section]
\newtheorem{corollary}[theorem]{Corollary}
\newtheorem{proposition}[theorem]{Proposition}
\newtheorem{lemma}[theorem]{Lemma}
\theoremstyle{remark}
\newtheorem{rmk}{Remark}
\def \bC {\mathbb C}
\def \bD {\mathbb D}
\def \bH {\mathbb H}
\def \bN {\mathbb N}
\def \bO {\mathbb O}
\def \bR {\mathbb R}
\def \bR {\mathbb R}
\def \bR {\mathbb R}
\def \bZ {\mathbb Z}
\def \cD {\mathcal D}
\def \cE {\mathcal E}
\def \cF {\mathcal F}
\def \cG {\mathcal G}
\def \cH {\mathcal H}
\def \cL {\mathcal L}
\def \cP {\mathcal P}
\def \cS {\mathcal S}
\def \cV {\mathcal V}
\def \fc {\mathfrak c}
\def \fg {\mathfrak g}
\def \fk {\mathfrak k}
\def \fl {\mathfrak l}
\def \fn {\mathfrak n}
\def \fo {\mathfrak o}
\def \fp {\mathfrak p}
\def \fs {\mathfrak s}
\def \fu {\mathfrak u}
\def \fv {\mathfrak v}
\def \fz {\mathfrak z}
\newcommand{\gt}{\mathfrak}
\def \fX {\mathfrak X}
\newcommand{\Spin}{{\rm Spin}}
\def \Sp {\text{\rm Sp}}
\def \su {\mathfrak {su}}
\def \RE {\text{\rm Re}\,}
\def \IM {\text{\rm Im}\,}
\def \al {\alpha}
\def \la {\lambda}
\def \ph {\varphi}
\def \del {\delta}
\def \lan {\langle}
\def \ran {\rangle}
\def \de {\partial}
\def \trans{\,{}^t\!}
\def \half{\frac12}
\def \inv{^{-1}}
\def \rinv\sharp1{^{(\sharp1)}}
\def \supp {\text{\rm supp\,}}
\def \deg {\text{\rm deg\,}}
\def \dim {\text{\rm dim\,}}
\def \span {\text{\rm span\,}}
\def \tr {\text{\rm tr\,}}
\newcommand{\Lie}{{\rm Lie\,}}
\begin{document}

\title[]
{Nilpotent Gelfand pairs\\ and spherical transforms of Schwartz functions\\ II. Taylor expansions on singular sets}

\author[]
{V\'eronique Fischer, Fulvio Ricci, Oksana Yakimova}

\address {King's College London\\ Strand\\ London WC2R 2LS\\ UK}
\email{veronique.fischer@kcl.ac.uk}

\address {Scuola Normale Superiore\\ Piazza dei Cavalieri 7\\ 56126 Pisa\\ Italy}
\email{fricci@sns.it}

\address {Emmy-Noether-Zentrum, Department Mathematik \\
Universit{\"a}t Erlangen-N{\"u}rnberg, Erlangen, Germany}
\email{yakimova@mpim-bonn.mpg.de}

\subjclass[2010]{Primary: 13A50, 43A32; Secondary:  43A85, 43A90}                         

\keywords{Gelfand pairs, Spherical transform, Schwartz functions, Invariants}

\begin{abstract}
This paper is a continuation of \cite{FRY}, in the direction of proving the conjecture that the spherical transform on a nilpotent Gelfand pair $(N,K)$ establishes an isomorphism between the space of $K$-invariant Schwartz functions on $N$ and the space of Schwartz functions restricted to the Gelfand spectrum $\Sigma_\cD$, properly embedded in a Euclidean space.

We prove a result, of independent interest for the representation theoretical problems that are involved, which can be viewed as a generalised Hadamard lemma for $K$-invariant functions on $N$. The context is that of nilpotent Gelfand pairs satisfying Vinberg's condition. This means that the Lie algebra $\fn$ of $N$ (which is step 2) decomposes as $\fv\oplus[\fn,\fn]$ with $\fv$ irreducible under $K$.
\end{abstract}

\maketitle

\begin{flushright}
{\it Dedicated to Joe Wolf}
\end{flushright}

\makeatletter
\renewcommand\l@subsection{\@tocline{2}{0pt}{3pc}{5pc}{}}
\makeatother

\tableofcontents

\section{Outline and formulation of the problem}
\label{sec_intro}

\bigskip

We say that $(N,K)$ is a {\it nilpotent Gelfand pair} (n.G.p. in short) if $N$ is a connected, simply connected nilpotent Lie group, $K$ is a compact group of automorphisms of $N$, and the convolution algebra $L^1(N)^K$ of $K$-invariant integrable functions on $N$ is commutative. This is the same as saying that 
$(K\ltimes N,K)$ is a Gelfand pair 
or that $N$ is a commutative nilmanifold according to \cite[Chapter~13]{W}.

By $\bD(N)^K$ we denote the left-invariant and $K$-invariant differential operators on $N$, and by 
$$
\cD=(D_1,\dots,D_d)\ ,
$$ 
a $d$-tuple of self-adjoint generators of $\bD(N)^K$. To each bounded $K$-spherical function $\ph$ on $N$ we associate (injectively) the $d$-tuple $\xi(\ph)=\big(\xi_1(\ph),\dots,\xi_d(\ph)\big)$ of eigenvalues of $\ph$ as an eigenfunction of $D_1,\dots,D_d$ respectively.

The $d$-tuples $\xi(\ph)$ form a closed subset $\Sigma_\cD$ of $\bR^d$ which is homeomorphic to the Gelfand spectrum $\Sigma$  of $L^1(N)^K$,i.e., the space of bounded spherical functions with the compact-open topology \cite{FeRu}. If $\ph_\xi$ is the spherical function corresponding to $\xi \in\Sigma_\cD$, the spherical transform
\begin{equation}\label{transform}
\cG F(\xi)=\int_N F(x)\ph_\xi(x\inv)\,dx\ ,
\end{equation}
can then be viewed as a function on $\Sigma_\cD$.
\medskip

The following conjecture has been formulated in \cite{FR}.
\medskip

\noindent{\bf Conjecture.} {\it The spherical transform maps the space $\cS(N)^K$ of $K$-invariant Schwartz functions on $N$ isomorphically onto}
$$
\cS(\Sigma_\cD)\overset{\rm def}=\cS(\bR^d)/\{f:f_{|{\Sigma_\cD}}=0\}\ .
$$
\medskip

The inclusion $\cG\big(\cS(N)^K\big)\supseteq \cS(\Sigma_\cD)$ is known to hold in general \cite{ADR2, FR}, so that the conjecture only concerns the opposite inclusion. Moreover, the validity of the conjecture does not depend on the choice of~$\cD$ \cite{FR}.

\medskip

In a nilpotent Gelfand pair $(N,K)$ the group $N$ is at most step-two \cite{BJR90}. We denote by $\fn$ its Lie algebra and by $\fv$ a $K$-invariant complement of the derived algebra $[\fn,\fn]$. We consider $\fn$ endowed with a $K$-invariant scalar product.

We refer the reader to \cite{ADR1, ADR2} for the proof of the conjecture when $N$ is either abelian or the Heisenberg group , and to \cite{FR, FRY} when the following conditions are satisfied :
\begin{enumerate}
\item [(i)] the $K$-orbits in $[\fn,\fn]$ are full spheres, 
\item [(ii)] $K$ acts irreducibly on $\fv$.
\end{enumerate}

In this paper we remove condition (i), still keeping condition (ii). The pairs for which (ii) holds have been classified by E. Vinberg in \cite{V1}, and for this reason we call (ii) {\it Vinberg's condition}. Notice that, under Vinberg's condition, 
$[\fn,\fn]=\fz$, where $\gt z$ is the centre of $\fn$.

We mention here that the classification of nilpotent Gelfand pairs has been completed in \cite{Y1, Y2}, see also \cite[Chapters 13, 15]{W}.

We prove a preliminary result in the direction of proving the conjecture for n.G.p. satisfying Vinberg's condition.  We believe that this result is of independent interest, and its proof requires an interesting combination of methods from noncommutative harmonic analysis and invariant theory. 
The proof of the conjecture for  pairs satisfying Vinberg's condition will appear in \cite{FRY3}.

The proof relies very much on explicit knowledge of the pairs at hand and on the fact that they share some common properties.
Assuming Vinberg's condition and disregarding the pairs considered in \cite{ADR1, ADR2,FR, FRY}, the basic list of n.G.p. to look at is that contained in 
Table~\ref{vinberg}. 
Some explanations are necessary from the beginning:
\begin{enumerate}
\item[(i)] in each case, the Lie bracket $[\ ,\ ]:\fv\times\fv\longmapsto\fz$ is uniquely determined by the requirement of being $K$-equivariant 
(see \cite[Section 13.4B]{W} for explicit expressions);
\item[(ii)] under the action of $K$, the space $\fz$ decomposes as $\fz_0\oplus\check\fz$, where $\check\fz$ is the (possibly trivial) subspace of $K$-fixed elements and $\fz_0$ is its $K$-invariant complement.
\end{enumerate}

\begin{table}[htdp]
\begin{center}
\begin{tabular}{|r||l|l|l|l|l|}
\hline
&$K$&$\fv$&$\fz$&notes&$\fz_0$ (if $\ne\fz$)\\
\hline
1&${\rm SO}_n$&$\bR^n$&$\fs\fo_n$&$n\ge4$&\\
2&${\rm SU}_{2n+1}$&$\bC^{2n+1}$&$\Lambda^2\bC^{2n+1}$&$n\ge2$& \\
3&$\text{Sp}_2\times\text{Sp}_n$&$\bH^2\otimes\bH^n$&$\fs\fp_2$&$n\ge2$&\\
\hline
4&${\rm U}_{2n+1}$&$\bC^{2n+1}$&$\Lambda^2\bC^{2n+1}\oplus\bR$&$n\ge1$&$\Lambda^2\bC^{2n+1}$ \\
5& ${\rm SU}_{2n}$& $\bC^{2n}$& $\Lambda^2\bC^{2n}\oplus\bR$&$n\ge2$& $\Lambda^2\bC^{2n}$\\
6&${\rm U}_n$&$\bC^n$&$\fu_n$&$n\ge2$&$\su_n$\\
7&$\text{Sp}_n$&$\bH^n$&$HS^2_0\bH^n\oplus\IM\bH$&$n\ge2$&$HS^2_0\bH^n$ \\
\hline
8&${\rm U}_2\times {\rm SU}_n$&$\bC^2\otimes\bC^n$&$ \fu_2$& $n\ge2$&$\su_2$\\
9&${\rm U}_2\times\text{Sp}_n$&$\bC^2\otimes\bH^n$&$ \fu_2$ &$n\ge2$&$\su_2$\\
10&${\rm U}_1\times\text{Spin}_7$&$\bC\otimes\bO$&$\IM\bO\oplus\bR$&&$\IM\bO$\\
\hline
\end{tabular}
\end{center}
\bigskip
\caption{}
\label{vinberg}
\end{table}

All other nilpotent Gelfand pairs satisfying Vinberg's condition are obtained from those in Table \ref{vinberg} by either of the following operations:
\begin{enumerate}
\item[(a)] normal extensions of $K$: replace $K$ by a larger group $K^\#$ of automorphisms of $N$ with $K\triangleleft K^\#$;
\item[(b)] central reductions: if $\fz$ has a nontrivial proper $K$-invariant subspace $\fs$, replace $\fn$ by $\fn/\fs$.
\end{enumerate}

In \cite{FRY} we proved that if $N,K,K^\#$ are as in (a), and the conjecture is true for $(N,K)$, then it is also true for $(N,K^\#)$. It will be proved in \cite{FRY3} that, applying a central reduction to a pair for which the conjecture is true, the resulting pair also satisfies the conjecture. We will therefore concentrate our attention on the pairs in Table~\ref{vinberg}.
\medskip

\medskip

In order to formulate our main result, Theorem~\ref{main} below, we need to describe some aspects of the structure of the Gelfand spectrum $\Sigma$ and the way they reflect on its embedded copy $\Sigma_\cD$ in $\bR^d$.

In $\Sigma$ we distinguish a relatively open and dense ``regular set'' from a ``singular set'', and singular points may have different levels of singularity. Since all bounded spherical functions are of positive type \cite{FRY}, a bounded spherical function on $N$ can be expressed as an average over $K$ of matrix entries of some irreducible unitary representation of $N$. Hence we can associate to each bounded spherical function a $K$-orbit of characters of $\exp\fz$. 

The regular elements of $\Sigma$ are those associated to orbits of maximal dimension.  
Among singular points, the highest level of singularity is reached by the bounded spherical functions associated to the characters of $\exp\fz$ which are fixed by all of $K$, i.e., which are trivial on $\exp\fz_0$. These are the spherical functions which factor to the quotient group $\check N=N/\exp_N\fz_0$. We call $\Sigma^0$ the subset of $\Sigma$ consisting of these spherical functions, and $\Sigma^0_\cD$ the corresponding subset of $\Sigma_\cD$.

At this point it is convenient to introduce a preferred system $\cD$ of generators of $\bD(N)^K$, obtained, via symmetrisation, from the bases of fundamental $K$-invariants on $\fn$  listed, case by case, in Section 7 of \cite{FRY}. We denote by $\rho=(\rho_1,\dots,\rho_d)$ the $d$-tuple of these polynomials.

The polynomials $\rho_j$ have the property of being homogeneous in each of the variables $v\in\fv$, $z\in\fz_0$, $t\in\check\fz$. 
For each $j$, we denote by $[j]$ the degree of $\rho_j$ in the $\fz_0$-variables. 

Notice that $[j]>0$ if and only if $D_j$ annihilates all the spherical functions which factor to $\check N$. At the same time, the polynomials $\rho_j$ with $[j]=0$ provide a system of fundamental $K$-invariants on the Lie algebra of $\check N$, $\check\fn\cong\fv\oplus\check\fz$, where $[\check\fn,\check\fn]=\check\fz$. Symmetrising $\rho_j$ on $\check N$ produces an operator $\check D_j\in\bD(\check N)^K$, which is the push-forward of $D_j$ via the canonical projection.

\medskip

Suppose that the $D_j\in\cD$ have been ordered so that $D_1,\dots, D_{d_0}$ are the operators with $[j]=0$. Then $\Sigma_\cD^0$ can be realised as the intersection of $\Sigma_\cD$ with the coordinate subspace
$$
\Sigma_\cD^0=\{\xi\in\Sigma_\cD:\xi_{d_0+1}=\cdots=\xi_d=0\}\ .
$$

What has been said above shows that there is a natural identification of $\Sigma_\cD^0$ with the Gelfand spectrum $\Sigma_{{\check D}}$ of the pair $(\check N,K)$, with ${\check D}=\{\check D_1,\dots,\check D_{d_0}\}$. 

We will decompose the variables of $\bR^d$ as
$\xi=(\xi',\xi'')$, with $\xi'=(\xi_1,\dots,\xi_{d_0})$,  $\xi''=(\xi_{d_0+1},\dots,\xi_d)$. To have a consistent notation, multi-indices $\al''$ will have components indexed from $d_0+1$ to $d$, so that monomials $\xi^{\al''}$ only depend on $\xi''$ and, similarly,
$$
D^{\al''}=D_{d_0+1}^{\al_{d_0+1}}\cdots D_{d}^{\al_d}\ .
$$

We set $[\al'']=\sum_{j=d_0+1}^d[j]\al_j$. Of course, $[\al'']$ equals the order of derivation of $D^{\al''}$ in the $\fz_0$-variables.

\medskip

Let us go back to the conjecture. Given a function $F\in\cS(N)^K$ we are interested in proving that its spherical transform \eqref{transform} extends from $\Sigma_\cD$ to a Schwartz function on $\bR^d$. In \cite{FRY}, one of the crucial points in the proof was Proposition 5.1, providing a Taylor development of $\cG F$ along the singular set;
in that situation, there was just one level of singularity. 

Recast in our present situation, that result can be phrased as follows: given $k\in \bN$,
there exist $K$-invariant Schwartz functions $\{F_{\al''}\}_{[\al'']\le k-1}$ on $N$, with $\cG F_{\al''}$ only depending on $\xi'$, and such that
\begin{equation}\label{expansion}
F=\sum_{[\al'']\le k-1}D^{\al''} F_{\al''}
+\sum_{|\beta|=k}\de_z^\beta R_\beta\ ,
\end{equation}
with $R_\beta\in \cS(N)$ for every $\beta$.

It is clear, by induction, that it will be sufficient to show that the remainder term
$$
\Phi_k(v,z,t)=\sum_{|\beta|=k}\de_z^\beta R_\beta(v,z,t)
$$
can be further expanded as
\begin{equation}
\label{expansion1}
\Phi_k=\sum_{[\al'']=k}D^{\al''} F_{\al''}+\sum_{|\gamma|=k+1}\de_z^\gamma S_\gamma
\end{equation}
for some new functions $F_{\al''}\in \cS(N)^K$, $[\al'']=k$, with $\cG F_{\al''}$ only depending on $\xi'$, and some new $S_\gamma\in \cS (N)$.

Formula \eqref{expansion1} can be seen as a noncommutative Hadamard-type formula. Its simplest abelian relative is the statement that if a radial Schwartz function on $\bR^n$ is a sum of second-order derivatives of Schwartz functions, then it is the Laplacian of a radial Schwartz function (see also Section \ref{sec_N'ab}).

We now give the argument that allows to reduce the proof of \eqref{expansion1} to proving Theorem \ref{main}.

 It is convenient to introduce modified versions of the operators $D_j$, an operation that corresponds to replacing the group $N$ with the direct product $\tilde N=\check N\times\fz_0$ of $\check N$ and the additive group $\fz_0$. We remark that $(\tilde N,K)$ is also a Gelfand pair (not satisfying Vinberg's condition), as it can be checked from the classification in \cite{Y2} or, through a direct argument, from the fact that the Lie algebra $\tilde\fn$ is a contraction of $\fn$.
 
From the same system of invariants $\rho_j$ used to generate the differential operators $D_j$ on $N$, we produce, by symmetrisation on $\tilde N$, a system $\tilde \cD=\{\tilde D_1,\dots,\tilde D_d\}$ of generators of $\bD(\tilde N)^K$. We also use the same coordinates $(v,z,t)\in\fv\times\fz_0\times\check\fz$ on $\tilde N$, via the exponential map $\exp_{\tilde N}$.
Taking advantage of this common coordinate system for $N$ and $\tilde N$, we can compare $D_j$ and $\tilde D_j$ as follows: the left-invariant vector field corresponding to the basis element $e_\nu\in\fv$ is
$$
X_\nu=\de_{v_\nu}+\sum_i b_i(v)\de_{z_i}+\sum_\ell c_\ell(v)\de_{t_\ell}
$$
on $N$, and
$$
\tilde X_\nu=\de_{v_\nu}+\sum_\ell c_\ell(v)\de_{t_\ell}
$$
on $\tilde N$.
Therefore,  
$$
D_j-\tilde D_j=\sum_{\al,\beta,\gamma\,:\,|\beta|\ge1} a^j_{\al,\beta,\gamma}(v)\de_v^\al\de_z^\beta\de_t^\gamma\ ,
$$
where each term contains at least one derivative in the $z$-variables.

This implies that, if $[\al'']=k$, then each term in $D^{\al''}-\tilde D^{\al''}$ contains at least $k+1$ derivatives in $z$. 
Then it will be sufficient to prove \eqref{expansion1} with each $D^{\al''}$ replaced by $\tilde D^{\al''}$, 
since the difference can be absorbed in the remainder term. 
Therefore \eqref{expansion1} is equivalent to
\begin{equation}\label{expansion2}
\Phi_k=\sum_{[\al'']= k}
\tilde D^{\al''} 
F_{\al''}
+\sum_{|\gamma|=k+1}\de_z^\gamma S_\gamma\ .
\end{equation}

To both sides of \eqref{expansion2} we apply Fourier transform in the $z$-variables, that we denote by ``$\,\widehat{\phantom a}\,$'', e.g.,
$$
\widehat {F_{\al''}}(v,\zeta,t)=\int_{\fz_0}F_{\al''}(v,z,t)e^{-i\lan z,\zeta\ran}\,dz\ ,
$$
where $\lan\ ,\ \ran$ is the given $K$-invariant scalar product on $\fz_0$.
We obtain:
\begin{equation}\label{expansion3}
\widehat \Phi_k(v,\zeta,t)=\sum_{[\al'']= k}
\tilde D_\zeta ^{\al''} 
\widehat{F_{\al''}}(v,\zeta,t)
+\sum_{|\gamma|=k+1}(i\zeta)^\gamma \widehat{S_\gamma}(v,\zeta,t)\ ,
\end{equation}
where each $\tilde D_{j,\zeta}$ is obtained from $\tilde D_j$ by replacing each derivative $\de_{z_\ell}$ by $i\zeta_\ell$.

Modulo error terms that involve higher-order powers of $\zeta$, we are left with proving that 
the $k$-th order term in the Taylor expansion in $\zeta$ of $\widehat{\Phi_k}(v,\zeta,t)$, i.e.,
$$
\sum_{|\gamma|=k}\frac{\zeta^\gamma}{\gamma!}\de_z^\beta \widehat {R_\beta}(v,0,t)
\ ,
$$
equals the $k$-th order term in the Taylor expansion in $\zeta$ of \eqref{expansion3}, i.e.,
$$
\sum_{[\al'']= k}
\tilde D_\zeta ^{\al''}\widehat{F_{\al''}}(v,0,t)\ .
$$

This equality is the subject of our main theorem.
\begin{theorem}
\label{main}
 Let $G$ be a $K$-invariant function on $\tilde N$ of the form
\begin{equation}\label{G}
 G(v,\zeta,t)=\sum_{|\gamma|=k}\zeta^\gamma G_\gamma(v,t)\ ,
\end{equation}
 with $G_\gamma\in\cS(\check N)$. 
 Then there are $H_{\al''}\in\cS(\check N)^K$, for $[\al'']=k$, such that
\begin{equation}\label{Hal}
 G=\sum_{[\al'']= k}\tilde D_\zeta^{\al''} H_{\al''}\ .
\end{equation}

More precisely, given a Schwartz norm $\|\ \|_{(p)}$, the functions $H_{\al''}$ can be found so that, for some $q=q(k,p)$, $\|H_{\al''}\|_{(p)}\le C_{k,p}\sum_{|\gamma|=k}\|G_\gamma\|_{(q)}$, for every $\al''$, $[\al'']=k$.
\end{theorem}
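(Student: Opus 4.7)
The plan is to recast \eqref{Hal} as the surjectivity (with Fr\'echet continuity) of the explicit $K$-equivariant map
\[
\Psi:\bigoplus_{[\al'']=k}\cS(\check N)^K\longrightarrow(\cS(\check N)\otimes S^k\fz_0^*)^K,\qquad (H_{\al''})\longmapsto\sum_{[\al'']=k}\tilde D_\zeta^{\al''}H_{\al''},
\]
after identifying the target space with the class of admissible $G$'s: a $K$-invariant polynomial of degree $k$ in $\zeta$ with coefficients in $\cS(\check N)$ is exactly a $K$-equivariant Schwartz map $\check N\to S^k\fz_0^*$. The norm estimate in \eqref{Hal} then follows from the open mapping theorem applied to $\Psi$.

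At the purely algebraic level the statement is immediate. Since $\rho_1,\dots,\rho_d$ generate $\bC[\tilde\fn]^K$ and $\rho_j$ is homogeneous of $\fz_0$-degree $[j]$, writing a $K$-invariant polynomial $P(v,z,t)$ of $z$-degree $k$ as a polynomial in the $\rho_j$ and collecting the terms of $z$-degree $k$ produces
\[
P(v,z,t)=\sum_{[\al'']=k}p_{\al''}(v,t)\,\rho^{\al''}(v,z,t),\qquad p_{\al''}\in\bC[\check\fn]^K.
\]
Equivalently, the symbol polynomials $\rho^{\al''}$ generate $(\bC[\check\fn]\otimes S^k\fz_0^*)^K$ as a module over $\bC[\check\fn]^K$.

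From here I would proceed in two coordinated stages. Stage one converts the polynomial generation of symbols into a Schwartz generation of covariants: prove that $(\cS(\check N)\otimes S^k\fz_0^*)^K$ is generated as a $\cS(\check N)^K$-module by the polynomial covariants $\rho^{\al''}$. This is the main analytic input and amounts to a Schwartz-covariant theorem for the Gelfand pair $(\check N,K)$. Stage two converts the pointwise multiplication by $\rho^{\al''}$ into the operator action of $\tilde D_\zeta^{\al''}$: since $\tilde D_\zeta^{\al''}$ has $\rho^{\al''}(\cdot,i\zeta,\cdot)$ as its principal symbol, the discrepancy is a $K$-equivariant differential operator of strictly smaller total order, and its contribution lies again in $(\cS(\check N)\otimes S^k\fz_0^*)^K$; an inductive correction on a filtration by total degree then closes the construction, each step preserving the Schwartz estimates and lowering a weight index.

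The main obstacle is Stage one, the Schwartz-covariant generation theorem for $(\check N,K)$. This pair does not itself satisfy Vinberg's condition---its centre $\check\fz$ is pointwise $K$-fixed---so the covariant Schwartz theory developed in \cite{FRY} does not apply directly, and one is forced to a case-by-case verification on the list in Table \ref{vinberg}, exploiting the explicit invariants of \cite[Section 7]{FRY} and the Schwartz theorems already available for the abelian and Heisenberg sub-pairs \cite{ADR1,ADR2}. Once Fr\'echet surjectivity of $\Psi$ is established, the continuity estimate $\|H_{\al''}\|_{(p)}\le C_{k,p}\sum_{|\gamma|=k}\|G_\gamma\|_{(q)}$ is automatic.
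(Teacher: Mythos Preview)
Your reformulation of the problem as surjectivity of $\Psi$ is correct, and Stage~1 is indeed needed; in the paper it corresponds to Lemma~\ref{decomposeG} and Lemma~\ref{Vm-equivariant}, and is obtained rather directly from the Schwarz--Mather theory for the abelian pair $(\fv,K)$ combined with the explicit structure of the invariants. However, you have inverted the difficulty: Stage~1 is the routine part, and Stage~2 is where the real content lies.

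The gap is in Stage~2. Your claim that $\tilde D_\zeta^{\al''}$ has $\rho^{\al''}(\,\cdot\,,i\zeta,\,\cdot\,)$ as principal symbol conflates two unrelated notions. The symmetrisation symbol of $\tilde D_\zeta^{\al''}$ is $\rho^{\al''}$ as a polynomial on $\check{\fn}^*$, not as a function on $\check N$; the operator $H\mapsto\tilde D_\zeta^{\al''}H$ differentiates $H$ in $(v,t)$, whereas $H\mapsto\rho^{\al''}(v,i\zeta,t)H$ multiplies pointwise. These are linked by Fourier transform only when $\check N$ is abelian---and indeed the paper's Section~\ref{sec_N'ab} is essentially your argument in that case. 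When $\check N$ is Heisenberg or quaternionic Heisenberg there is no scalar Fourier transform, and the discrepancy $\tilde D_\zeta^{\al''}H-\rho^{\al''}H$ is not ``lower order'' in any filtration that would make an induction close: it has the same $\zeta$-degree $k$ and there is no decreasing weight on the target. Concretely, after Stage~1 one has $G=g\sum_j a_j^{(m)}(v)e_j^{(m)}$ with $g\in\cS(\check N)^K$, and one must solve the genuinely overdetermined PDE system $A_j^{(m)}h=g\,a_j^{(m)}$ for a single $h\in\cS(\check N)^K$; no symbol-calculus induction produces $h$.

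The paper handles this step (Proposition~\ref{Vm-operator}) by passing to the operator-valued Fourier transform: in each Bargmann representation $\pi_\la$ the problem becomes inverting $d\pi_\la(M_m)$ on the $m$-admissible $K$-isotypes $V(\mu)$. This requires (i) a case-by-case multiplicity-one computation showing $R(\mu)\subset R(\mu)\otimes\cV_m$ exactly when $\mu$ is $m$-admissible, (ii) a non-vanishing argument for $d\pi_\la(M_m)_{|V(\mu)}$ using the metaplectic identification of the $L_C$ with $d\sigma(\fu_\kappa)$ and an $\mathfrak{sl}_2$-triple, and (iii) a division lemma on the Gelfand spectrum (Lemma~\ref{lem_claim} and Proposition~\ref{lem_ext_vanishS_m}) showing that $\check\cG(M_m^*G)/u_m$ extends to a Schwartz function with norm control. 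None of this is captured by a formal lower-order correction.
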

\medskip

In Section \ref{sec_N'ab}, we prove Theorem \ref{main} for the pairs in the first block of Table \ref{vinberg}.
Indeed, in these cases the group $\check N$ is reduced to $\fv$ and is abelian.

The rest of the article will be devoted to the proof of Theorem \ref{main}  for the other pairs, where
$\check N$ is a Heisenberg group, with the exception of line 7, where it is a ``quaternionic Heisenberg group''
with Lie algebra $\bH^n\oplus \IM\bH$. 

In Section \ref{sec_polynomials}, we develop a careful analysis of the structure of the $K$-invariant polynomials on $\fv\oplus\fz_0$, describing the $K$-invariant irreducible subspaces of the symmetric algebras over $\fv$ and $\fz$ that are involved.

In Section \ref{sec_fourier}, we reduce the proof of Theorem \ref{main} to an equivalent problem of representing vector-valued $K$-equivariant functions in terms of $K$-equivariant differential operators applied to $K$-invariant scalar functions (Proposition \ref{Vm-operator}). Then we analyse the images of these differential operators in the Bargmann representations of $\check N$, identifying the $K$-invariant irreducible subspaces of the Fock space on which they vanish. This analysis reveals interesting connections between these operators and the natural action of $K$ itself on the Fock space, once both are realised to be part of the metaplectic representation.

Finally, in Sections \ref{sec_N'nonab} and \ref{sec_conclusion}, we complete the proof of Theorem \ref{main} for the pairs with $\check N$ nonabelian.

\vskip1cm

\section{Proof of Theorem \ref{main} for $\check N$ abelian}\label{sec_N'ab}

\bigskip

In this section, we consider the pairs in the first block of Table \ref{vinberg}, where $\fz_0=\fz$ and, therefore, $\check N=\fv$ is abelian. We call $(\fv,K)$ an {\it abelian pair}.
In this case,  one can prove that Theorem \ref{main} is true 
with the additional property that the functions $H_\alpha$ can be chosen independently of $p$ and depending linearly on the~$G_\gamma$. 

Via Fourier transform in $\fv$,
this statement is equivalent to the Proposition \ref{prop_cq_SMH} below.
We first explain the notation.
We split the set $\rho$ of fundamental invariants into the two subsets $\rho'$, $\rho''$, where $\rho'$ contains the polynomials depending only on $v\in\fv$, and $\rho''$ those which contain $z\in\fz$ at a positive power. This notation matches with the splitting of coordinates $(\xi',\xi'')$ on the Gelfand spectrum introduced in Section \ref{sec_intro}.

\begin{proposition}
\label{prop_cq_SMH}
Let $G\in C^\infty(N)^K$ satisfying
$$
G(v,\zeta)=\sum_{|\gamma|=k} \zeta^\gamma G_\gamma(v)
\ ,
$$
with $G_\gamma\in \cS (\fv)$.
Then there exist $g_{\alpha''}\in\cS(\bR^{d_0})$, $[\alpha'']=k$, depending linearly and continuously on $\{G_\gamma\}_\gamma$ and
such that
$$
G(v,\zeta)=\sum_{[\alpha'']=k} \rho(v,\zeta)^{\alpha''}  g_{\alpha''}\circ \rho'(v)\ .
$$
\end{proposition}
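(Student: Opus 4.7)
The plan is to view $G$ as a $K$-equivariant Schwartz-valued map from $\fv$ into the finite-dimensional $K$-module $\cP^k(\fz)$ of homogeneous polynomials of degree $k$ in $\zeta$, and then to invoke an equivariant version of the Schwartz-Mather theorem in the Schwartz class. The hypothesis $G(v,\zeta)=\sum_{|\gamma|=k}\zeta^\gamma G_\gamma(v)$ with $G_\gamma\in\cS(\fv)$ says exactly that $v\mapsto G(v,\,\cdot\,)$ belongs to $\cS(\fv)\otimes\cP^k(\fz)$, and the $K$-invariance of $G$ becomes $K$-equivariance of this map with respect to the natural diagonal action.

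Next I would identify the generators of the relevant module of polynomial covariants. Using a $K$-invariant inner product on $\fz$ to identify $\fz\cong\fz^*$, the $K$-equivariant polynomial maps $\fv\to\cP^k(\fz)$ correspond bijectively to the $K$-invariant polynomials on $\fv\oplus\fz$ that are homogeneous of degree $k$ in $\zeta$. By the Hilbert-Chevalley theorem, $\bR[\fv\oplus\fz]^K$ is generated by $\rho=(\rho',\rho'')$; under the $\zeta$-grading, $\rho'_j$ has weight $0$ while $\rho''_j$ has weight $[j]$. Extracting the $\zeta$-weight-$k$ component of an arbitrary polynomial expression in these generators, one sees that this module is generated, over $\bR[\rho']\cong\bR[\fv]^K$, by the monomials $\rho^{\alpha''}$ with $[\alpha'']=k$.

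I would then apply the equivariant Schwartz-class analogue of the Schwarz-Mather theorem: for any finite-dimensional $K$-module $W$, every $K$-equivariant Schwartz map $\fv\to W$ admits a representation $\sum_i p_i(v)\,h_i\circ\rho'(v)$ with $h_i\in\cS(\bR^{d_0})$ depending linearly and continuously on the input, where $\{p_i\}$ is any finite set of generators of the module of $K$-equivariant polynomial maps $\fv\to W$. Applied with $W=\cP^k(\fz)$ and the generators found above, this produces the desired $g_{\alpha''}\in\cS(\bR^{d_0})$ for $[\alpha'']=k$, with the claimed linear and continuous dependence on $\{G_\gamma\}_\gamma$.

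The principal obstacle is the availability of the vector-valued, Schwartz-class Mather theorem with linear continuous dependence on the datum: the scalar $C^\infty$ version is Schwarz's theorem, its Schwartz-class enhancement is due to Mather, and the equivariant Schwartz-class version is part of the toolkit developed in the earlier papers of this series (see \cite{ADR2, FR, FRY}). Once this analytic tool is accepted, the invariant-theoretic identification of the generators is transparent, and no case-by-case analysis of the three pairs in the first block of Table~\ref{vinberg} is needed.
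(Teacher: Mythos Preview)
Your approach is genuinely different from the paper's. The paper does not invoke an equivariant Schwartz--Mather theorem. Instead it (i) applies a \emph{scalar} hybrid Schwarz--Mather extension $\tilde\cE:(\cS(\fv)\hat\otimes C^\infty(\fz))^K\to\cS(\bR^{d_0})\hat\otimes C^\infty(\bR^{d-d_0})$, adapted from \cite[Theorem~6.1]{ADR2}, to write $G=h\circ\rho$ with $h$ smooth in $\xi''$ and Schwartz in $\xi'$; (ii) Taylor-expands $h$ in the $\xi''$-variables to order $k$ via Lemma~\ref{hadamard}; and (iii) composes back with $\rho$ and uses the $\zeta$-homogeneity of $G$ to kill all terms except those with $[\alpha'']=k$. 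This route needs only the scalar theorem and an elementary Hadamard expansion, and it is already uniform across the three pairs---no case-by-case analysis occurs in the paper's proof either.

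Your route is conceptually clean: viewing $G$ as a $K$-equivariant Schwartz map $\fv\to\cP^k(\fz)$ and identifying the module generators $\{\rho^{\alpha''}:[\alpha'']=k\}$ is exactly the right invariant-theoretic picture. But the argument rests entirely on a result you have not substantiated, namely an equivariant Schwartz-class Mather theorem with a continuous linear section. Contrary to what you write, this statement is \emph{not} in \cite{ADR2, FR, FRY}; those references provide only the scalar version. In fact, the paper's proof is precisely a derivation of the special case of that equivariant theorem you need (for $W=\cP^k(\fz)$), obtained from the scalar version on $\fv\oplus\fz$ together with the Taylor expansion in $\xi''$. So the content you propose to black-box is exactly what the paper supplies. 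To make your argument stand on its own you must either give a genuine reference for the equivariant Schwartz--Mather theorem with continuous linear dependence, or reduce it to the scalar case---and the natural such reduction is the paper's argument.
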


The proof is quite simple and relies on two adapted versions of Hadamard's Lemma on one side, and of the Schwarz-Mather theorem \cite{Mather, Schw} on the other side.
Hadamard's Lemma states that if a function of two variables $f(x,y)\in C^\infty(\bR^n\times\bR^m)$ satisfies  $f(0,y)=0$ for every $y$, then there exist $C^\infty$-functions $g_j(x,y)$, $j=1,\dots,n$, such that
$$
f(x,y)=\sum_{j=1}^n x_jg_j(x,y)\ .
$$

Adapting  the proof of Hadamard's lemma given in Proposition 5.3 in \cite{FRY},
it is easy to show the following.

\begin{lemma}\label{hadamard}
\quad
Let $f(x,y)\in C^\infty(\bR^n\times\bR^m)$ and $k\in \bN$. Then there exist smooth functions $g_\alpha(y)\in C^\infty(\bR^m)$, $|\alpha|\leq k$,
and  $R_\alpha(x,y)\in C^\infty(\bR^n\times\bR^m)$, $|\alpha|= k+1$,
 such that
$$
f(x,y)=\sum_{|\alpha|\leq k} x^\alpha g_\alpha (y)
+ \sum_{|\alpha|=k+1} x^\alpha R_\alpha (x,y)\ .
$$

Furthermore if $f(x,y)\in C^\infty(\bR^n)\hat \otimes\cS(\bR^m)$
in the sense that, for every $L$,
$$
\sup_{\begin{matrix}
|\alpha|,|\beta|,|x|\leq L\\ y\in \bR^n\end{matrix}
}
(1+|y|)^L |\partial_x^\alpha\partial_y^\beta f(x,y)|
<\infty
\ ,
$$
then the functions $g_\alpha(y)$, $|\alpha|\leq k$,
and  $R_\alpha(x,y)$, $|\alpha|= k+1$,
 can be chosen in $\cS(\bR^m)$ and $C^\infty(\bR^n)\hat \otimes\cS(\bR^m)$
 respectively,
and depending linearly and continuously on $f$.
\end{lemma}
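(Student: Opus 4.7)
The plan is to apply Taylor's formula in the $x$-variable with integral remainder, keeping $y$ as a parameter, and then to verify that the explicit formulas for the coefficients and the remainder preserve Schwartz decay in $y$ with continuous dependence on $f$.

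First I would set the explicit candidates dictated by Taylor's theorem:
$$
g_\alpha(y) := \frac{1}{\alpha!}(\partial_x^\alpha f)(0,y), \qquad |\alpha|\le k,
$$
and
$$
R_\alpha(x,y) := \frac{|\alpha|}{\alpha!}\int_0^1 (1-t)^{|\alpha|-1}(\partial_x^\alpha f)(tx,y)\,dt, \qquad |\alpha|=k+1.
$$
The standard Taylor identity (applied in $x$ at each fixed $y$) immediately yields the required decomposition, and both assignments $f\mapsto g_\alpha$ and $f\mapsto R_\alpha$ are manifestly linear in $f$.

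For the Schwartz addendum, assume $f\in C^\infty(\bR^n)\,\hat \otimes\,\cS(\bR^m)$. The membership $g_\alpha\in\cS(\bR^m)$ with continuous dependence on $f$ is immediate, since $(1+|y|)^L|\partial_y^\gamma g_\alpha(y)|$ is controlled by the hypothesis on $f$ evaluated at $x=0$ with $|\alpha|+|\gamma|$ derivatives. For $R_\alpha$, differentiation under the integral sign gives
$$
\partial_x^\beta\partial_y^\gamma R_\alpha(x,y) = \frac{|\alpha|}{\alpha!}\int_0^1 (1-t)^{|\alpha|-1}t^{|\beta|}(\partial_x^{\alpha+\beta}\partial_y^\gamma f)(tx,y)\,dt;
$$
for $|x|,|\beta|,|\gamma|\le L$ the point $tx$ stays in the same ball, so the integrand is dominated by a constant multiple of $(1+|y|)^{-L}$ depending only on a finite-order seminorm of $f$, and integration in $t$ preserves the bound.

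There is no real obstacle here: the argument is the multi-variable analogue of the proof of Proposition~5.3 in \cite{FRY}, and the only point requiring care is the bookkeeping that relates a target seminorm of $g_\alpha$ or $R_\alpha$ to a source seminorm of $f$, which works out with an index shift of order $k$.
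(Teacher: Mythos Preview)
Your proposal is correct and is exactly the argument the paper has in mind: the paper does not give a separate proof but simply points to adapting Proposition~5.3 of \cite{FRY}, and your explicit Taylor expansion with integral remainder, together with the routine seminorm bookkeeping, is precisely that adaptation.
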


\begin{proof}[Proof of Proposition \ref{prop_cq_SMH}]
All the polynomials $\rho_j$ are homogeneous in $v$ and $z$
and, for $j=1,\ldots, d_0$, they only depend on $v$.
Hence it is easy to adapt the proof of Theorem 6.1 in \cite{ADR2}
to show that there exists 
a continuous linear operator
$\tilde\cE: \big(\cS(\fv)\hat\otimes C^\infty(\fz)\big)^K\rightarrow \cS(\bR^{d_0})\hat\otimes C^\infty(\bR^{d-d_0})$
such that 
$\tilde\cE(g)\circ \rho =g$ for every $g\in \big(\cS(\fv)\hat\otimes C^\infty(\fz)\big)^K$.
So let $h=\tilde \cE (G)$. 
Using Lemma \ref{hadamard}, we obtain that,
for any $\xi=(\xi',\xi")\in \bR^d$,
$$
h(\xi)
=
\sum_{|\alpha''|\leq k}\xi^{\al''}  g_{\alpha''}(\xi')
+
\sum_{|\alpha''|= k+1} \xi^{\al''}  U_{\alpha''}(\xi)
\ ,
$$
where each $g_{\alpha''}$ depends linearly and continuously 
on $h\in \cS(\bR^{d_0})\hat\otimes C^\infty (\bR^{d-d_0})$,
hence on $\{G_\gamma\}_\gamma$.
Composing with $\rho$, we get:
$$
G(v,\zeta)
=
h\circ \rho(v,\zeta)
=
\sum_{|\alpha''|\leq k}
\rho(v,\zeta)^{\alpha''} g_{\alpha''}\big(\rho'(v)\big)
+
\sum_{|\alpha''|=k+1}
\rho(v,\zeta)^{\alpha''} U_{\alpha''}\big(\rho(v,\zeta)\big)
\ .
$$

As $G$ is a polynomial of degree $k$ in $\zeta$, we have:
$$
G(v,\zeta)
=
\sum_{[\alpha'']= k}
\rho(v,\zeta)^{\alpha''} g_{\alpha''}\big(\rho'(v)\big)
\ .\qedhere
$$
\end{proof}

  \vskip1cm
 
 \section{$\check N$ nonabelian: structure of $K$-invariant polynomials on $\fv\oplus\fz_0$} \label{sec_polynomials}

\bigskip
 
 From Table~\ref{vinberg} we isolate the last two blocks, 
 i.e., the cases where $\check N$ is not abelian. 
 To each line we add the list of fundamental $K$-invariants on $\fv\oplus \fz_0$ as it appears in Theorem~7.5 of \cite{FRY}. 
 We split the set $\rho$ of these invariants into three subsets, $\rho_\fv$, $\rho_{\fz_0}$, $\rho_{\fv,\fz_0}$, 
 containing the polynomials which depend, respectively, only on $v\in\fv$, only on $z\in\fz_0$, or on both $v$ and $z$. 
We call the last ones the ``mixed invariants''. 
It follows from \cite[Corollary 7.6]{FRY} that the algebra $\cP(\fv\oplus\fz_0)^K$ is freely generated by $\rho=\rho_\fv\cup\rho_{\fz_0}\cup\rho_{\fv,\fz_0}$. 
We convene to use the letters $r,q,p$ to denote, respectively, elements of $\rho_\fv$, $\rho_{\fz_0}$, $\rho_{\fv,\fz_0}$.
 
The result is Table~\ref{invariants}.
 Note that expressions like $z^k$ refer to the $k$-th power of a matrix $z$. 
As in  \cite{FRY}, at lines 9 and 10,
we decompose $\fv$ as the sum of two 
subspaces invariant under $\Sp_1{\times}\Sp_n$ and $\Spin_7$ respectively:
$$
\bC^2\otimes\bC^{2n}=\bR^{4n}{\oplus}\bR^{4n}
\quad
\mbox{and}\quad \mathbb C\otimes\bO= \bR^8{\oplus}\bR^8\ ,
$$
and we write an element $v$ of $\fv$ as $v=x+iy$ and
$v=v_1+iv_2$ accordingly.
For line 10, we also identify $\bR^8$ with $\bO$ and the conjugation there 
is the octonian conjugation.

 \begin{table}[htdp]
\begin{center}
\begin{tabular}{|r||l|l|l||c|c|c|}
\hline
&$K$&$\fv$&$\fz_0$&$r_k(v)$&$q_k(z)$&$p_k(v,z)$\\
\hline
\hline
4&${\rm U}_{2n+1}$&$\bC^{2n+1}$&$\Lambda^2\bC^{2n+1}$&$|v|^2$&$\begin{matrix}\tr\big((\bar zz)^k\big)\\ (1\le k\le n)\end{matrix}$&$\begin{matrix}v^*(\bar zz)^kv\\ (1\le k\le n)\end{matrix}$ \\
\hline
5& ${\rm SU}_{2n}$& $\bC^{2n}$&$\Lambda^2\bC^{2n}$&$|v|^2$&$\begin{matrix}\tr\big((\bar zz)^k\big)\\ 
\mbox{{\small ($1\le k\le n{-}1$)}}\\ {\rm Pf}(z)\,,\, \overline{{\rm Pf}(z)}\end{matrix}$&$\begin{matrix}v^*(\bar zz)^kv\\ 
\mbox{{\small($1\le k\le n{-}1$)}}\end{matrix}$\\
\hline
6&${\rm U}_n$&$\bC^n$&$\su_n$&$|v|^2$&$\begin{matrix}\tr\big((iz)^k\big)\\ (2\le k\le n)\end{matrix}$&$\begin{matrix}v^*(iz)^kv\\ \mbox{{\small($1\le k\le n{-}1$)}}\end{matrix}$\\
\hline
7&$\text{Sp}_n$&$\bH^n$&$HS^2_0\bH^n$ &$|v|^2$&$\begin{matrix}\tr z^k\\ (2\le k\le n)\end{matrix}$&$\begin{matrix}v^*z^kv\\ \mbox{{\small($1\le k\le n{-}1$)}}\end{matrix}$\\
\hline
\hline
8&${\rm U}_2{\times} {\rm SU}_n$&$\bC^2{\otimes}\bC^n$&$\su_2$&$\begin{matrix}\tr\big((vv^*)^k\big)\\  (k=1,2)\end{matrix}$&$|z|^2$&$i\tr(v^*zv)$\\
\hline
9&${\rm U}_2{\times}\text{Sp}_n$&$\bC^2{\otimes}\bC^{2n}$&$\su_2$&$\begin{matrix}\tr\big((vv^*)^k\big)\\  (k=1,2)\\ |x|^2|y|^2-(\trans xy)^2\end{matrix}$&$|z|^2$&$i\tr(v^*zv)$\\
\hline
10&${\rm U}_1{\times}\text{Spin}_7$&$\bC{\otimes}\bO$&$\IM\bO$&$\begin{matrix}|v|^2\\ |v_1|^2|v_2|^2-\big(\RE(v_1\bar v_2)\big)^2\end{matrix}$&$|z|^2$&$\RE\big(z(v_1\bar v_2)\big)$\\
\hline
\end{tabular}
\end{center}
\bigskip
\caption{}
\label{invariants}
\end{table}

\medskip

If $X$ is a real vector space, 
we call $\cP(X)$ the polynomial algebra over $X$, 
and $\cP^k(X)$ the subspace of homogeneous polynomials of degree $k$. 
When $X$ is endowed by a complex structure, 
we denote by $\cP^{k_1,k_2}(X)$ the terms in the splitting of $\cP(X)$ according to bi-degrees; 
for example $\cP^{k,0}$ is the space of holomorphic polynomials in $\cP^k$. 

This applies in particular to $\fv$, which always carries a complex structure, and to $\fz_0$ at lines 4 and 5. At line 7,  in fact, $\fv$ admits a different complex structure for every choice of a unit quaternion.

The indexing of the elements $p_k(v,z)$ of $\rho_{\fv,\fz_0}$  is assumed to match with the notation of Table~\ref{invariants} when there is more than one element in the family. 

Coherently with the notation used in the previous sections, 
if $p^\al(v,z)$ is a monomial in the $p_k$, 
we denote by $|\al|$ the usual length of the multi-index $\alpha$, 
and by $[\al]$ the degree of the polynomial $p^\al(v,z)$
in $z$. 
When $\fz_0$ is a complex space, we denote by $[\![\al]\!]$ the bi-degree of $p^\al$ in $z,\bar z$. The same convention on the use of  $[\ ]$ and 
$[\![\ ]\!]$ applies to monomials in the $q_k$.

\medskip

The pairs in Table~\ref{invariants} are distinguished by two properties. 
The first is  that 
we can add a subspace $\check\fz$, of dimension one or three, to $\fz_0$ keeping  
$(K,N)$ as a nilpotent Gelfand pair. The second is that 
$\gt v{\oplus}\check\fz$, regarded as a quotient of $\gt n$, 
is either a Heisenberg Lie 
algebra or a quaternionic Heisenberg Lie algebra. 
Another observation will be of particular importance in the future. 

\begin{rmk}\label{stabiliser}
 Fix $\zeta\in\fz_0$ and let $K_\zeta$ be its stabiliser in $K$. 
 Then the pair $(\check N,K_\zeta)$ is also a nilpotent Gelfand pair. 
The result goes back to 
Carcano's characterisation of nilpotent Gelfand pairs in terms of multiplicity free actions \cite{C}. 
An alternative proof can be found, e.g., in \cite[Ch.2,\S4]{V1}.
\end{rmk}

The first dividend we get is the following.  
Evaluating $K$-invariants at $\zeta\in\gt z_0$, 
considered as a point of $\gt z$, 
we get $K_\zeta$-invariant polynomials 
on $\gt v{\times}\{\zeta\}$, or better to say on $\gt v$. 
These polynomials have the same degree in $v$ and in $\bar v$ 
\cite[Section~4]{HoweUm}.
Hence 
the expressions of the polynomials $r_k(v)$ and $p_k(v,z)$ 
must also have the same degree in $v$ and $\bar v$
(this can be seen directly from Table~\ref{invariants}).
Therefore we have the splitting
$$
\cP(\fv\oplus\fz_0)^K=\sum_{m,k\ge0} \big(\cP^{m,m}(\fv)\otimes\cP^k(\fz_0)\big)^K\ .
$$

We want to refine this decomposition, by putting special attention on the mixed invariants.
Any mixed invariant $p(v,z)$ in $\big(\cP^{m,m}(\fv)\otimes\cP^k(\fz_0)\big)^K$ can be expanded as 
\begin{equation}\label{irreducibles}
p=\sum_j p_{V_j,W_j}\ ,
\end{equation}
where, for each $j$, $V_j$ and $W_j$ are $K$-invariant, irreducible subspaces of 
$\cP^{m,m}(\fv)$ and $\cP^k(\fz_0)$ respectively, with $V\sim W$ 
equivalent to $W$ as a $K$-module,
and
\begin{equation}\label{dualbases}
p_{V_j,W_j}(v,z)=\sum_h a_h(v)b_h(z)\ ,
\end{equation}
with $\{a_h\}$ and $\{b_h\}$ being orthonormal dual 
bases. 

In a rather canonical way, we will now replace the basis of monomials $p^\al(v,z)q^\beta(z)r^\gamma(v)$ by a new basis, obtained by replacing each $p^\al$ by a new polynomial $\widetilde{p^\al}$ which is ``irreducible'', in the sense that it equals $p_{V_\al,W_\al}$ for appropriate irreducible $V_\al,W_\al$.

\medskip

Before going into this construction, we remark some useful aspects of the list of pairs and invariants in Table \ref{invariants}.

\medskip
\begin{rmk}\label{tablecomments}
\quad

\begin{enumerate}
\item[\rm(a)] The first block of Table \ref{invariants} contains four infinite families, with both $\dim\fv$ and $\dim\fz_0$ increasing with the parameter $n$. Each pair admits a single invariant in $\rho_\fv$, and several in $\rho_{\fz_0}$ and $\rho_{\fv,\fz_0}$.
\item[\rm(b)] Inside the first block, the pairs at lines 4 and 5 have a special feature, in that $\fn_0$ is a complex Lie algebra 
and $\fz_0$ is a complex space.
The invariants for a pair in line 4 or 5 can be associated  with the lower degrees invariants for the pair at line 6 with the same $\fv$,
and
the former invariants coincide with those of the latter but evaluated at $(v,-i\bar zz)$ instead of $(v,z)$.
\item[\rm(c)] Each line in the second block contains either an ``exceptional'' isolated pair (line 10), 
or an infinite family (lines 8, 9), but with $\fz_0$ fixed. Each pair admits a single invariant in $\rho_{\fz_0}$ and in $\rho_{\fv,\fz_0}$, but several in $\rho_\fv$.
\item[\rm(d)] For each pair, the $k$-th mixed polynomial $p_k(v,z)$ is a finite sum 
\begin{equation}\label{pk}
p_k(v,z)=\sum_{j=1}^{\nu_1} \ell_j(v)b_{jk}(z)\ ,
\end{equation}
with $\nu_1=\dim\fz_0$ and the $\ell_j$ independent of $k$. 
\item[\rm(e)] For the pairs at lines 6-10, the polynomials $b_{j1}(z)$ appearing in the expression \eqref{pk} of $p_1$ are the coordinate functions on $\fz_0$.
The 
real span of the polynomials $\ell_j(v)$ is a $K$-invariant subspace of $\cP^{1,1}(\fv)$ equivalent to $\fz_0$. 
\item[\rm(f)] At lines 6, 7 and for $k>1$, $p_k(v,z)$ (resp. $q_k(z)$) equals, up to a power of $i$, $p_1(v,z^k)$ (resp. $q_1(z^k)$). Here again $z^k$ is the $k$-th power of a matrix. 
\end{enumerate}
\end{rmk}

\medskip

Because of Remark \ref{tablecomments}\,(b),
 we first restrict our attention to the pairs of lines 6-10.

For given $m,k$, we look at the structure of $\big(\cP^{m,m}(\fv)\otimes\cP^k(\fz_0)\big)^K$, the space of $K$-invariant polynomials on $\fv\oplus\fz_0$ of bi-degree $(m,m)$ in $v$ and degree $k$ in $z$. 

Inside $\cP^{m,m}(\fv)$ consider the subspace generated by polynomials which are divisible by elements of $\rho_\fv$, and let $\cH^{m,m}(\fv)$ its orthogonal complement. More explicitly, if $r^\gamma(v)$ is a monomial in the $r_j$ of bi-degree $(\del_\gamma,\del_\gamma)$, then
$$
\cH^{m,m}(\fv)=\Big(\sum_{1\le\del_\gamma\le m}r^\gamma\cP^{m-\del_\gamma,m-\del_\gamma}(\fv)\Big)^\perp\ .
$$

With an abuse of language, we call $\cH^{m,m}(\fv)$ the {\it harmonic subspace} of $\cP^{m,m}(\fv)$.
By the $K$-invariance of each $\cH^{m,m}(\fv)$,
$$
\big(\cP^{m,m}(\fv)\otimes\cP^k(\fz_0)\big)^K={\sum_{0\le \del\le m}}^{\!\!\oplus}{\sum_{\del_\gamma=\del}}^{\!\!\oplus}r^\gamma\big(\cH^{m-\del_\gamma,m-\del_\gamma}(\fv)\otimes\cP^k(\fz_0)\big)^K\ .
$$

Similarly, we set
$$
\cH^k(\fz_0)=\Big(\sum_{1\le [\beta]\le k}q^\beta\cP^{k-[\beta]}(\fz_0)\Big)^\perp\ .
$$

For an element $p$ of $\big(\cP^{m,m}(\fv)\otimes\cP^k(\fz_0)\big)^K$ we denote by $\tilde p$ its $\fv$-{\it harmonic component}, i.e., its component in $\big(\cH^{m,m}(\fv)\otimes\cP^k(\fz_0)\big)^K$.

Finally, we denote by $\cP^m(\ell)\subset \cP^{m,m}(\fv)$ the space generated by the monomials of degree $m$ in the $\ell_j$.

\begin{proposition}\label{structure}
Let $K$, $\fv$, $\fz_0$ be as in Table \ref{invariants}, lines 6-10.
\begin{enumerate}
\item[\rm (i)] If $k<m$, $\big(\cH^{m,m}(\fv)\otimes\cP^k(\fz_0)\big)^K$ is trivial.
\item[\rm (ii)] For $k=m$, $\big(\cH^{m,m}(\fv)\otimes\cP^m(\fz_0)\big)^K$ is one-dimensional, and it is generated by $\widetilde{p_1^m}$.
\item[\rm (iii)] Let $V_m=\cH^{m,m}(\fv)\cap\cP^m(\ell)$.
Then $V_m$ is absolutely irreducible, i.e., it stays irreducible 
as a representation of $K^{\mathbb C}$ after the complexification 
$V_m\otimes_{\mathbb R}\mathbb C$.
We fix an orthonormal basis $a^{(m)}_j$, $1\le j\le \nu_m$, of $V_m$.
Then
\begin{equation}\label{tildep1m}
\widetilde {p_1^m}=\sum_{j=1}^{\nu_m}a^{(m)}_j(v)b^{(m)}_j(z)\ ,
\end{equation}
with the $b_j^{(m)}$ non-trivial.

Let $W_m$ denote the linear span of the $b^{(m)}_j$, $1\le j\le \nu_m$.
Then $W_m\sim V_m$ and
$$
W_m\subset \cH^m(\fz_0)\ .
$$ 

\item[\rm (iv)] If $|\al|=m$, then $\widetilde{p^\al}\ne0$ and
$$
\widetilde {p^\al}=\sum_{j=1}^{\nu_m}a^{(m)}_j(v)b_j^{(\al)}(z)\ .
$$
\item[\rm (v)]  For every $m$ and $k$, the products $\widetilde{p^\al}q^\beta$ with $|\al|=m$ and $[\al]+[\beta]=k$ form a basis of $\big(\cH^{m,m}(\fv)\otimes\cP^k(\fz_0)\big)^K$. In particular,
$$
\big(\cH^{m,m}(\fv)\otimes\cP^k(\fz_0)\big)^K=\big(V_m\otimes\cP^k(\fz_0)\big)^K\ .
$$
\item[\rm (vi)] The spaces $V_m$ are mutually $K$-inequivalent.
\end{enumerate}
\end{proposition}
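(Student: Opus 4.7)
The proof splits naturally into a combinatorial part yielding (i), (ii), (v), and a representation-theoretic part yielding (iii), (iv), (vi). For the combinatorial part I would use that $\cP(\fv\oplus\fz_0)^K$ is freely generated by $\rho=\rho_\fv\cup\rho_{\fz_0}\cup\rho_{\fv,\fz_0}$, so every invariant is uniquely a polynomial in these generators. Inspecting Table~\ref{invariants}, each $p_k$ has bi-degree $(1,1)$ in $v$ and degree $k$ in $z$, whence a monomial $r^\gamma q^\beta p^\alpha$ lies in $\cP^{m,m}(\fv)\otimes\cP^k(\fz_0)$ iff $|\gamma|+|\alpha|=m$ and $[\beta]+[\alpha]=k$. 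The orthogonal projection $\pi:\cP^{m,m}(\fv)\to\cH^{m,m}(\fv)$ is $K$-equivariant and annihilates every $r^\gamma$ with $|\gamma|\ge 1$, so $\big(\cH^{m,m}(\fv)\otimes\cP^k(\fz_0)\big)^K$ is spanned by the $\widetilde{p^\alpha}q^\beta$ with $|\alpha|=m$ and $[\alpha]+[\beta]=k$. The key inequality $[\alpha]\ge|\alpha|=m$ (with equality iff $\alpha$ is concentrated on $p_1$) rules out any admissible $(\alpha,\beta)$ when $k<m$, proving (i); for $k=m$ only $\alpha=(m,0,\ldots)$ and $\beta=0$ survive, proving (ii) with generator $\widetilde{p_1^m}$; and the general count yields (v). Linear independence of the $\widetilde{p^\alpha}q^\beta$ and non-vanishing of $\widetilde{p^\alpha}$ for $|\alpha|=m$ both follow from free generation: otherwise $p^\alpha$ would lie in the $\bR[\rho]$-ideal generated by the $r_j$, a contradiction.

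For the representation-theoretic part, complete reducibility applied to the $K$-stable decomposition $\cP^{m,m}(\fv)=\cH^{m,m}(\fv)\oplus\sum_{|\gamma|\ge1}r^\gamma\cP^{m-|\gamma|,m-|\gamma|}(\fv)$ forces every irreducible $K$-submodule of $\cP^{m,m}(\fv)$ to lie entirely in one summand. Applied to $\cP^m(\ell)$, this gives $\cP^m(\ell)=V_m\oplus\big(\cP^m(\ell)\cap\sum_\gamma r^\gamma\cP^{m-|\gamma|,m-|\gamma|}\big)$ and hence $\pi(\cP^m(\ell))=V_m$. By Remark~\ref{tablecomments}(d), the $v$-dependence of each $p^\alpha$ is entirely through the $\ell_j$, so the $v$-coefficients of $\widetilde{p^\alpha}$ lie in $V_m$ when $|\alpha|=m$. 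Since by (ii) the space $\big(\cH^{m,m}(\fv)\otimes\cP^m(\fz_0)\big)^K$ is one-dimensional, Schur's lemma yields a rank-one decomposition $\widetilde{p_1^m}=\sum_j a_j^{(m)}(v)\,b_j^{(m)}(z)$ whose supports $V'_m\subset V_m$ and $W_m\subset\cP^m(\fz_0)$ are isomorphic irreducible $K$-modules; a further Schur argument, using $V_m\subset\cP^m(\ell)\cong S^m(\fz_0)$ via algebraic independence of the $\ell_j$'s (to be extracted case by case from Table~\ref{invariants}), rules out any irreducible summand $U\not\cong V'_m$ of $V_m$, forcing $V_m=V'_m$ to be irreducible. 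The same Schur argument applied to $\widetilde{p^\alpha}\in\big(\cH^{m,m}(\fv)\otimes\cP^{[\alpha]}(\fz_0)\big)^K$ delivers (iv).

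To finish (iii) and prove (vi), I would identify $V_m$, case by case from Table~\ref{invariants}, with the Cartan component of $S^m(\fz_0)$, i.e.\ the irreducible $K^{\bC}$-submodule generated by the $m$-th power of a highest-weight vector of $\fz_0\otimes\bC$. Absolute irreducibility is then automatic, and the distinct highest weights $m\lambda_{\fz_0}$ give (vi). The containment $W_m\subset\cH^m(\fz_0)$ I would extract from a symmetric $z$-harmonic projection argument: a count dual to (ii) shows that $\big(\cH^{m,m}(\fv)\otimes\cH^m(\fz_0)\big)^K$ is also one-dimensional, and the nonvanishing of the iterated harmonic projection of $p_1^m$ in both variables forces the $b_j^{(m)}$ into $\cH^m(\fz_0)$. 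The main obstacle I expect is precisely this case-by-case Cartan-component identification across the five rows of Table~\ref{invariants}, especially row 10 (exceptional $\Spin_7$ on $\bC\otimes\bO$), where the explicit structure of harmonic and mixed invariants must be checked by hand.
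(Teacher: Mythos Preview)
Your combinatorial derivation of (i), (ii), (v) and the non-vanishing of each $\widetilde{p^\alpha}$ is correct and coincides with the paper's argument.

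There is, however, a genuine error in your route to (iii). The $\ell_j$ are \emph{not} algebraically independent in general: at line~6 with $n\ge 3$ they are $n^2-1$ polynomials in only $2n$ real variables, and the same transcendence-degree obstruction occurs at line~7. So the isomorphism $\cP^m(\ell)\cong S^m(\fz_0)$ you invoke fails, and the case-by-case verification you postpone would not succeed. What you actually need is much weaker and comes for free: the $K$-equivariant linear isomorphism $\fz_0\to\mathrm{span}\{\ell_j\}$ of Remark~\ref{tablecomments}(e) induces a $K$-equivariant \emph{surjection} $\cP^m(\fz_0)\twoheadrightarrow\cP^m(\ell)$, so every irreducible $V\subset V_m$ has an isomorphic copy $W\subset\cP^m(\fz_0)$. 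Pairing them produces a nonzero $p_{V,W}\in(\cH^{m,m}(\fv)\otimes\cP^m(\fz_0))^K$; since this space is one-dimensional by (ii) and distinct summands $V$ give linearly independent invariants, $V_m$ can contain only one irreducible, hence $V_m$ is irreducible. Absolute irreducibility is then immediate from $\dim(V_m\otimes V_m)^K=1$, with no case analysis.

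Your plan to obtain (vi) and absolute irreducibility by a row-by-row Cartan-component identification is therefore unnecessary, and the paper gives a two-line uniform argument instead: if $V_m\sim V_{m'}$ with $m<m'$, then $\sum_j a_j^{(m')}(v)\,b_j^{(m)}(z)$ would be a nonzero element of $(\cH^{m',m'}(\fv)\otimes\cP^m(\fz_0))^K$, contradicting~(i). For $W_m\subset\cH^m(\fz_0)$ the paper likewise avoids any dual count: decompose $\widetilde{p_1^m}$ along $\cP^m(\fz_0)=\bigoplus_{[\beta]\ge 0}q^\beta\cH^{m-[\beta]}(\fz_0)$; each piece with $[\beta]>0$ is a $K$-invariant in $V_m\otimes\cH^{m-[\beta]}(\fz_0)$ with $m-[\beta]<m$, hence zero by~(i). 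So the ``main obstacle'' you anticipate --- the exceptional row~10 included --- never arises.
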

\begin{proof}
(i) is a consequence of the structure of the $p_j$. If $k<m$, a monomial in the $p,q,r$ must necessarily contain some $r$-factor.

(ii) follows from the fact that $p_1^m$ is the only monomial in 
$\big(\cP^{m,m}(\fv)\otimes\cP^{m}(\fz_0)\big)^K$ which does not contain $r$-factors.
If we had $\widetilde{p^\al}=0$, this would establish an algebraic relation among the fundamental invariants, in contrast with \cite[Corollary 7.6]{FRY}. This last remark also proves (v) and the first statement in (iv).

The proof of (iii) requires some discussion of $\cP^m(\ell)$. 
First of all, every element of $\big(\cH^{m,m}(\fv)\otimes\cP^k(\fz_0)\big)^K$ necessarily belongs to the smaller space $\big(\big(\cH^{m,m}(\fv)\cap\cP^m(\ell)\big)\otimes\cP^k(\fz_0)\big)^K$, by the structure of the invariants.

The second fact is that the equivariant map of Remark \ref{tablecomments} (e), from $\fz_0$ to the span of the $\ell_j$, induces a surjective equivariant map from $\cP^m(\fz_0)$ to $\cP^m(\ell)$. 

Consider now $\cH^{m,m}(\fv)\cap \cP^m(\ell)$. For every irreducible $K$-invariant subspace $V$ of it, there must be an equivalent irreducible subspace $W$ in $\cP^m(\fz_0)$. This gives rise to an invariant $p_{V,W}$ of \eqref{dualbases}, belonging to 
$(V\otimes W)^K\subset\big(\cH^{m,m}(\fv)\otimes\cP^m(\fz_0)\big)^K$. But by (ii), this space is one-dimensional. Therefore there exists a unique $V\subset \cH^{m,m}(\fv)\cap \cP^m(\ell)$ and a corresponding unique $W\subset \cP^m(\fz_0)$ equivalent to $V$. This forces $V$ to be all of $\cH^{m,m}(\fv)\cap \cP^m(\ell)$, and it must coincide with $V_m$. 
The equality $\dim(V\otimes V)^K=1$ implies also that $V$ is absolutely 
irreducible. 

Decompose now $\widetilde{p_1^m}$ as
$$
\widetilde{p_1^m}=p^\sharp(v,z)+\sum_{[\beta]>0}q^\beta(z)p^\flat_\beta(v,z)\ ,
$$
with $p^\sharp\in V_m\otimes\cH^m(\fz_0)$ and $p^\flat_\beta\in V_m\otimes\cH^{m-[\beta]}(\fz_0)$. Then $p^\sharp$ and the $p^\flat_\beta$'s are all $K$-invariant. It follows from (i) that $p^\flat_\beta=0$ for every $\beta$, i.e., $\widetilde{p_1^m}=p^\sharp\in V_m\otimes\cH^m(\fz_0)$.

To complete the proof of (iv), take any element $p$ of $\big(\cH^{m,m}(\fv)\otimes\cP^k(\fz_0)\big)^K$. By \eqref{irreducibles}, 
$$
p=\sum_j p_{V_j,W_j}\ ,
$$
with the $p_{V_j,W_j}$ as in \eqref{dualbases}. Repeating the same argument used above, each $V_j$ gives rise to an invariant polynomial in $\big(\cH^{m,m}(\fv)\otimes\cP^m(\fz_0)\big)^K$. By (iii), $V_j=\cH^{m,m}(\fv)\cap\cP^m(\ell)$ for every $j$.

We prove (vi) by contradiction. If we had $V_m\sim V_{m'}$ with $m< m'$, the polynomial $\sum_ja_j^{(m')}(v)b_j^{(m)}(z)$ would be a non-zero element of $\big(\cH^{m',m'}(\fv)\otimes\cP^m(\fz_0)\big)^K$, contradicting (i).
\end{proof}

Consider now the pairs of lines 4, 5. Introducing bi-degrees for polynomials on $\fz_0$, we obtain the following rather obvious variants, on the basis of Remark \ref{tablecomments} (b).

\begin{proposition}\label{structure'}
Let $K$, $\fv$, $\fz_0$ be as in Table~\ref{invariants}, lines 4, 5.
\begin{enumerate}
\item[\rm (i')] If $k<m$, $\big(\cH^{m,m}(\fv)\otimes\cP^{k,k}(\fz_0)\big)^K$ is trivial.
\item[\rm (ii')]  The polynomials $p^\al$, $\widetilde{p^\al}$ coincide with those of line 6, evaluated at $(v,-i\bar zz)$. In particular, {\rm (ii)}, {\rm (iii)}, {\rm (iv)}, {\rm (v)}, {\rm (vi)} of Proposition \ref{structure} have the same formulation (up to the obvious notational changes), with the same $a_j^{(m)}$ and $V_m$ as for the twin pair of line 6.
\item[\rm (iii')] 
For $k\ge m$, $\big(\cH^{m,m}(\fv)\otimes\cP^{k,k}(\fz_0)\big)^K=\span\{\widetilde{p^\al} q^\beta:|\al|=m\,,\,[\![\al]\!]+[\![\beta]\!]=(k,k)\}$. In particular,
$$
\big(\cH^{m,m}(\fv)\otimes\cP^{k,k}(\fz_0)\big)^K=\big(V_m\otimes\cP^{k,k}(\fz_0)\big)^K\ .
$$
\item[\rm (iv')] If $k_1\ne k_2$, $\big(\cH^{m,m}(\fv)\otimes\cP^{k_1,k_2}(\fz_0)\big)^K$ is trivial, except at line 5, for $k_1-k_2=jn$, $j\in\bZ$. In this case,
$$
\big(\cH^{m,m}(\fv)\otimes\cP^{k_1,k_2}(\fz_0)\big)^K=\begin{cases}({\rm Pf}\,z)^j\big(\cH^{m,m}(\fv)\otimes\cP^{k_2,k_2}(\fz_0)\big)^K&\text{ if }j>0\ ,\\
(\overline{{\rm Pf}\,z})^{-j}\big(\cH^{m,m}(\fv)\otimes\cP^{k_1,k_1}(\fz_0)\big)^K&\text{ if }j<0\ .
\end{cases}
$$
\end{enumerate}
\end{proposition}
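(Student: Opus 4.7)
The plan is to deduce Proposition \ref{structure'} from Proposition \ref{structure} via the substitution trick of Remark \ref{tablecomments}(b). I would first establish (ii') by direct inspection of Table \ref{invariants}: the mixed invariant $p_k(v,z) = v^*(\bar z z)^k v$ at lines 4, 5 is precisely the line 6 invariant $v^*(iz')^k v$ evaluated at $z' = -i\bar z z$, since $(iz')^k = (\bar z z)^k$; and similarly $q_k$ at lines 4, 5 is obtained by the same substitution from $\tr((iz')^k)$. Since the harmonic subspace $\cH^{m,m}(\fv)$ depends only on the $K$-action on $\fv$ (and for line 5, $SU_{2n}$ and $U_{2n}$ have the same invariants in $\cP^{m,m}(\bC^{2n})$, as the central $U(1)\subset U_{2n}$ acts trivially on polynomials of equal bi-degree), the harmonic decomposition commutes with the substitution. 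Hence $V_m$ and the orthonormal basis $a_j^{(m)}$ are literally those of the twin line 6 pair, and $\widetilde{p^\al}$ at lines 4, 5 takes the form $\sum_j a_j^{(m)}(v)\,b_j^{(\al)}(-i\bar z z)$, with $b_j^{(\al)}(-i\bar z z)$ lying in $\cP^{[\al],[\al]}(\fz_0)$. Statements (ii)-(vi) of Proposition \ref{structure} then transfer mutatis mutandis.

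For (i') and the span description in (iii'), I would observe that every Pfaffian-free $K$-invariant polynomial on $\fv \oplus \fz_0$ is, by construction, a polynomial in $r,p,q$, and these generators are images of line 6 generators under $z' \mapsto -i\bar z z$. Since $(\bar z z)^s$ has bi-degree $(s,s)$ in $(\bar z,z)$, a Pfaffian-free invariant of bi-degree $(k,k)$ corresponds to a line 6 invariant of degree $k$ in $z'$. Thus $\big(\cH^{m,m}(\fv)\otimes\cP^{k,k}(\fz_0)\big)^K$ is the image under the substitution of $\big(\cH^{m,m}(\fv)\otimes\cP^k(\fz_0')\big)^K$ at the twin line 6 pair, and Proposition \ref{structure}(i), (v) yield (i') and (iii') respectively.

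Part (iv') is a bi-degree bookkeeping argument. All generators other than ${\rm Pf}(z)$ and $\overline{{\rm Pf}(z)}$ have equal $(\bar z, z)$-bi-degree; ${\rm Pf}(z)$ has bi-degree $(n,0)$ and its conjugate $(0,n)$. At line 4 there is no Pfaffian, so $k_1 \ne k_2$ forces the space to be trivial. At line 5 with $k_1 - k_2 = jn$, $j > 0$, any monomial of bi-degree $(k_1, k_2)$ must contain exactly $j$ factors of ${\rm Pf}(z)$ and none of $\overline{{\rm Pf}(z)}$ (otherwise cancellation would lower both bi-degrees below the required amount, contradicting freeness of the generators); factoring ${\rm Pf}(z)^j$ out gives the claimed identification with the $(k_2, k_2)$-component, and symmetrically for $j < 0$. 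The main obstacle throughout is justifying that the substitution $z' \mapsto -i\bar z z$ faithfully transfers the harmonic decomposition, since this map is not surjective at the level of $\fz_0$ as varieties (its image is a rank-constrained Hermitian subset); however it is surjective at the level of \emph{invariant rings}, because by \cite[Corollary 7.6]{FRY} the Pfaffian-free $K$-invariant algebra at lines 4, 5 is freely generated by the substituted line 6 generators, which is exactly what is needed for the transfer.
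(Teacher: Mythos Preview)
The paper gives no explicit proof of this proposition; it merely introduces it as ``rather obvious variants, on the basis of Remark~\ref{tablecomments}(b)''. Your strategy---transport everything from line~6 via the substitution $z'\mapsto -i\bar z z$---is exactly the mechanism the paper points to, and your argument is essentially correct.

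Two small points deserve tightening. First, in (iv$'$), a monomial of bi-degree $(k_1,k_2)$ with $k_1-k_2=jn>0$ need \emph{not} have exactly $j$ factors of ${\rm Pf}(z)$ and none of $\overline{{\rm Pf}(z)}$: if $a,b$ count the respective factors, all that is forced is $a-b=j$, so $a\ge j$. But this still allows factoring out $({\rm Pf}\,z)^j$, leaving a monomial of bi-degree $(k_2,k_2)$, so your conclusion stands. Second, at line~5 your substitution argument for (i$'$) and (iii$'$) covers only Pfaffian-free invariants, whereas $\big(\cH^{m,m}(\fv)\otimes\cP^{k,k}(\fz_0)\big)^K$ can contain monomials with factors $|{\rm Pf}(z)|^{2l}$. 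This is harmless: since ${\rm Pf}$ and $\overline{{\rm Pf}}$ have $\fv$-degree~$0$, any monomial in $\cH^{m,m}(\fv)\otimes\cP^{k,k}(\fz_0)$ still requires exactly $m$ $p$-factors, each contributing $\fz_0$-bi-degree at least $(1,1)$, whence $k\ge m$; and for (iii$'$) note that $q^\beta$ ranges over all of $\rho_{\fz_0}$, Pfaffians included, so the claimed spanning set already accounts for these terms.
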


Notice that Propositions \ref{structure} and  \ref{structure'} show that, 
for every $\al$,
\begin{equation}\label{W}
\widetilde{p^\al}=p_{V_m,W_\al}\ ,
\end{equation}
with $m=|\al|$ and $W_\al\subset \cP^{[\al]}(\fz_0)$ (resp. $W_\al\subset \cP^{[\![\al]\!]}(\fz_0)$) equivalent to $V_m$.

\begin{corollary}\label{basis}
The polynomials $\widetilde{p^\al}q^\beta r^\gamma$ form a basis of $\cP(\fv\oplus\fz_0)^K$.
\end{corollary}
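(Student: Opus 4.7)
The plan is to view the corollary as a change-of-basis statement. The family $\{p^\al q^\beta r^\gamma\}$ is already a basis of $\cP(\fv\oplus\fz_0)^K$ by \cite[Corollary 7.6]{FRY}, both families are indexed by the same set of triples of multi-indices, and Propositions \ref{structure}(iv) and \ref{structure'}(ii') ensure that $\widetilde{p^\al}\ne 0$ for every $\al$. It therefore suffices to prove that the transition matrix between the two families is invertible; I would do this by exhibiting it as triangular with unit diagonal with respect to the ordering by $|\al|$.

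The key bi-degree observation is that, by the very definition of the $\fv$-harmonic component, $p^\al - \widetilde{p^\al}$ lies in
$$
\sum_{1\le\del_{\gamma'}\le|\al|} r^{\gamma'}\,\bigl(\cP^{|\al|-\del_{\gamma'},\,|\al|-\del_{\gamma'}}(\fv)\otimes\cP^{[\al]}(\fz_0)\bigr)^K\ ,
$$
with the analogous bi-degree statement in $z$ for the complex pairs of lines 4 and 5 of Table \ref{invariants}. Expanding the $K$-invariant coefficients that appear there in the free generators $\rho$ and matching bi-degrees in $v$, every monomial $p^{\al''} q^{\beta''} r^{\gamma''}$ that shows up must satisfy $|\al''| + \del_{\gamma''} = |\al| - \del_{\gamma'}$, and in particular $|\al''| < |\al|$.

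Multiplying through by $q^\beta r^\gamma$ then yields an identity of the form
$$
\widetilde{p^\al}\,q^\beta r^\gamma \;=\; p^\al q^\beta r^\gamma \;-\; \sum_{|\al''|<|\al|} c^{\al,\beta,\gamma}_{\al'',\beta''',\gamma'''}\; p^{\al''} q^{\beta'''} r^{\gamma'''}\ .
$$
Ordering the indexing triples $(\al,\beta,\gamma)$ so that $|\al|$ is non-decreasing, the resulting transition matrix becomes triangular with $1$'s on the diagonal, hence invertible, and we conclude that $\{\widetilde{p^\al}q^\beta r^\gamma\}$ is also a basis of $\cP(\fv\oplus\fz_0)^K$. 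No serious obstacle is anticipated: the argument is linear algebra on top of the harmonic decomposition in $\fv$ and the free-generation property of $\rho$. The only bookkeeping point to watch is the parallel $z$-bi-degree analysis for the complex pairs at lines 4 and 5, but this is handled exactly as in the real case.
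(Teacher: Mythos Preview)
Your argument is correct and is essentially the paper's own (implicit) proof, just repackaged. In the paper the corollary is stated without proof, as an immediate consequence of Proposition~\ref{structure}(v) (and its analogue~\ref{structure'}) together with the harmonic decomposition
\[
\bigl(\cP^{m,m}(\fv)\otimes\cP^k(\fz_0)\bigr)^K=\bigoplus_{\gamma}r^\gamma\bigl(\cH^{m-\del_\gamma,m-\del_\gamma}(\fv)\otimes\cP^k(\fz_0)\bigr)^K\ ;
\]
both that route and your triangular change-of-basis rest on the same observation, namely that $p^\al-\widetilde{p^\al}$ lies in the $r$-divisible part and hence expands in monomials $p^{\al''}q^{\beta''}r^{\gamma''}$ with $|\al''|<|\al|$, combined with free generation of $\cP(\fv\oplus\fz_0)^K$ by $\rho$. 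One minor remark: the non-vanishing $\widetilde{p^\al}\ne0$ that you cite from Proposition~\ref{structure}(iv) is not actually needed for your argument (unit-triangularity already forces it), and you might note that the transition matrix respects each finite-dimensional bi-graded piece $\bigl(\cP^{M,M}(\fv)\otimes\cP^k(\fz_0)\bigr)^K$, which makes the invertibility of the infinite matrix immediate.
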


\vskip1cm
 
 \section{Fourier analysis of $K$-equivariant functions on $\check N$}\label{sec_fourier}
 
 \bigskip

We start from a function $G$ as in Theorem \ref{main},
$$
\begin{aligned}
& G(v,\zeta,t)=\sum_{|\gamma|=k}\zeta^\gamma G_\gamma(v,t)&\qquad \text{(lines 6-10)}\ ,\\
& G(v,\zeta,t)=\sum_{|\gamma_1|+|\gamma_2|=k}\zeta^{\gamma_1}\bar \zeta^{\gamma_2} G_\gamma(v,t)&\qquad \text{(lines 4, 5)}\ ,
\end{aligned}
$$
which is $K$-invariant, and with $G_\gamma\in\cS(\check N)$ (we use the variable $\zeta$ as a reminder that, in the course of the argument, we have taken a Fourier transform in $z$).

The following statement follows from Proposition \ref{prop_cq_SMH} and Corollary \ref{basis}.  
 
\begin{lemma} \label{decomposeG}
\quad
\begin{enumerate}
\item[\rm(i)] (lines 6-10)
A function $G\in\big(\cS(\check N)\otimes\cP^k(\fz_0)\big)^K$ can be uniquely decomposed as
\begin{equation}\label{tilde-decomposition}
G(v,\zeta,t)=\sum_{[\al]+[\beta]=k}q^\beta(\zeta)\widetilde{p^\al}(v,\zeta) g_{\al\beta}(v,t)\ ,
\end{equation}
with $g_{\al\beta}\in\cS(\check N)^K$ depending continuously on $G$. 
\item[\rm(ii)] (lines 4, 5)
A function $G\in\big(\cS(\check N)\otimes\cP^{k,k}(\fz_0)\big)^K$ can be uniquely decomposed as
\begin{equation}\label{tilde'-decomposition}
G(v,\zeta,t)=\sum_{[\![\al]\!]+[\![\beta]\!]=(k,k)}q^\beta(\zeta)\widetilde{p^\al}(v,\zeta) g_{\al\beta}(v,t)\ ,
\end{equation}
with $g_{\al\beta}\in\cS(\check N)^K$ depending continuously on $G$. 
\item[\rm(iii)] For the pair at line 5, $\big(\cS(\check N)\otimes\cP^{k+jn,k}(\fz_0)\big)^K$ equals $({\rm Pf}\,z)^j\big(\cS(\check N)\otimes\cP^{k,k}(\fz_0)\big)^K$ for $j>0$, $(\overline{{\rm Pf}\,z})^{-j}\big(\cS(\check N)\otimes\cP^{k,k}(\fz_0)\big)^K$ for $j<0$.
\end{enumerate}
\end{lemma}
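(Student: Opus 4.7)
My plan is to derive the decomposition by lifting $G$ through the fundamental invariants using a Schwarz--Mather-type extension (as in Proposition \ref{prop_cq_SMH}) and then changing from the monomial basis $\{p^\al q^\beta r^\gamma\}$ to the harmonic basis $\{\widetilde{p^\al}q^\beta r^\gamma\}$ provided by Corollary \ref{basis}. Throughout, $t\in\check\fz$ is treated as a $K$-fixed parameter.

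For (i), I proceed as follows. By the same Schwarz--Mather construction used in the proof of Proposition \ref{prop_cq_SMH}, applied to the $K$-action on $\fv\oplus\fz_0$ with fundamental invariants $(r,p,q)$, there is a continuous linear extension $G\mapsto h$ with $h(r(v),p(v,\zeta),q(\zeta),t)=G(v,\zeta,t)$, where $h$ is Schwartz in $(r,t)$ and smooth in $(p,q)$. Since $G$ is homogeneous of $\zeta$-degree $k$, a Hadamard expansion (Lemma \ref{hadamard}) in the $(p,q)$-variables and retention of the $\zeta$-degree-$k$ part produces a finite expansion
\[
G(v,\zeta,t)=\sum_{[\al]+[\beta]=k}p^\al(v,\zeta)\,q^\beta(\zeta)\,g_{\al\beta}(v,t),
\]
with $g_{\al\beta}\in\cS(\check N)^K$ depending linearly and continuously on $G$. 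I then substitute the defining triangular identity expressing $p^\al$ as $\widetilde{p^\al}$ plus terms of the form $\widetilde{p^{\al'}}r^{\gamma'}$ of strictly lower $v$-degree and with $[\al']=[\al]$, collect the coefficients of each $\widetilde{p^\al}q^\beta$, and absorb the resulting polynomial factors $r^{\gamma'}(v)$ into the $K$-invariant Schwartz coefficients to obtain \eqref{tilde-decomposition}. Uniqueness follows because $\{\widetilde{p^\al}q^\beta r^\gamma\}$ is a basis of $\cP(\fv\oplus\fz_0)^K$: writing each $g_{\al\beta}(v,t)=\phi_{\al\beta}(r(v),t)$ via the Schwarz--Mather identification of $\cS(\fv)^K$ and expanding in the $r$-variables, any nontrivial vanishing combination would contradict Corollary \ref{basis}.

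Part (ii) is the verbatim bi-degree analogue, using Proposition \ref{structure'} in place of Proposition \ref{structure}; the complex structure on $\fz_0$ at lines 4 and 5 causes the whole chain--Schwarz--Mather lift, Hadamard expansion, harmonic change of basis--to respect bi-degrees. Part (iii) is immediate from Proposition \ref{structure'}(iv'): on $\fz_0=\Lambda^2\bC^{2n}$ the Pfaffians $\mathrm{Pf}\,z$ and $\overline{\mathrm{Pf}\,z}$ are the only $K$-invariants with unbalanced bi-degree, so any invariant of bi-degree $(k+jn,k)$ factors as $(\mathrm{Pf}\,z)^j$ (for $j>0$) or $(\overline{\mathrm{Pf}\,z})^{-j}$ (for $j<0$) times an invariant of balanced bi-degree, which by (ii) lies in the stated space. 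The main delicate point is the Schwartz-class continuous lift of Step 1, which amounts to extending the operator of \cite[Theorem 6.1]{ADR2} to the full generating set $(r,p,q)$; once that is in place, the rest is straightforward algebraic bookkeeping with the harmonic change of basis.
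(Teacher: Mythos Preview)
Your proposal is correct and follows exactly the paper's approach, which merely states that the lemma ``follows from Proposition \ref{prop_cq_SMH} and Corollary \ref{basis}''; you have supplied the details the paper omits. One minor correction: in the triangular change of basis, $p^\al-\widetilde{p^\al}$ expands as a combination of terms $r^{\gamma'}\widetilde{p^{\al'}}q^{\beta'}$ with $|\al'|<|\al|$ and $[\al']+[\beta']=[\al]$ (not $[\al']=[\al]$), so additional $q$-factors can appear---this is harmless for your argument since the total $\zeta$-degree is preserved and the resulting coefficients $r^{\gamma'}g_{\al\beta}$ are still in $\cS(\check N)^K$.
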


From the right-hand side of \eqref{tilde-decomposition}, or of \eqref{tilde'-decomposition}, we extract the single term 
$$
\widetilde{p^\al}(v,\zeta)\tilde g_{\al\beta}(v,t)=\sum_{j=1}^{\nu_m}a_j^{(m)}(v) g_{\al\beta}(v,t)b_j^{(\al)}(\zeta)\ ,
$$
with $m=|\al|\ge1$. 

In order to emphasise that the following analysis depends only on $m$ and not on the specific multi-index $\al$, it is convenient  to introduce an abstract representation space $\cV_m$ of $K$, equivalent to $V_m$, and denote by 
$\{e^{(m)}_j\}_{1\le j\le\nu_m}$ an orthonormal basis corresponding to the basis 
$\{a_j^{(m)}\}$ of $V_m$ via an intertwining operator. 

We denote by $\tau_m$ the representation of $K$ on $\cV_m$.

We regard the function $\widetilde{p^\al} g_{\al\beta}$ 
as a $\cV_m$-valued function on $\check N$:
$$
G_{\al\beta}(v,t)= g_{\al\beta}(v,t)\sum_{j=1}^{\nu_m}a_j^{(m)}(v)\,e^{(m)}_j\ .
$$

Since the $b_j^{(\al)}$ form an orthonormal basis of the space $W_\al$ in \eqref{W} and $W_\al\sim V_m\sim\cV_m$, it follows that $G_{\al\beta}$  is $K$-equivariant, i.e.,
$$
G_{\al\beta}(kv,t)=\tau_m(k)G_{\al\beta}(v,t)\ ,\qquad (k\in K)\ .
$$

In fact, we have the following characterisation of $K$-equivariant $\cV_m$-valued smooth functions.

\begin{lemma}\label{Vm-equivariant}
Let $H$ be a $\cV_m$-valued, $K$-equivariant  Schwartz function on $\check N$. Then $H$ can be expressed as
$$
H(v,t)=h(v,t)\sum_{j=1}^{\nu_m}a_j^{(m)}(v)e^{(m)}_j\ ,
$$
with $h\in\cS(\check N)^K$, depending continuously on $H$.
\end{lemma}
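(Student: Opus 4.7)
The plan is to treat Lemma \ref{Vm-equivariant} as the covariant analogue of the scalar Schwartz--Mather-type statement of Proposition \ref{prop_cq_SMH}. Set $\Phi(v)=\sum_{j=1}^{\nu_m}a_j^{(m)}(v)\,e_j^{(m)}$, which is a $\cV_m$-valued $K$-equivariant polynomial by the definition of the intertwiner matching $\{a_j^{(m)}\}$ with $\{e_j^{(m)}\}$. Since $K$ acts trivially on $\check\fz$, the variable $t$ is handled as a parameter throughout, and the goal reduces to: every smooth $K$-equivariant $\cV_m$-valued map on $\fv$ that is jointly Schwartz in $(v,t)$ is expressible as $h(v,t)\,\Phi(v)$ with $h\in\cS(\check N)^K$, and the map $H\mapsto h$ is continuous.

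The central invariant-theoretic step is the generation identity
$$
\big(\cP(\fv)\otimes \cV_m\big)^K \;=\; \cP(\fv)^K\cdot\Phi,
$$
so that $\Phi$ is, up to multiplication by invariants, the only $\cV_m$-covariant of $K$ on $\fv$. The proof rests on two inputs: first, the absolute irreducibility of $V_m\cong \cV_m$ from Proposition \ref{structure}(iii), which by Schur's lemma makes $(V_m\otimes \cV_m)^K$ one-dimensional and canonically spanned by the tensor whose associated polynomial map is $\Phi$; and second, the bigraded multiplicity-freeness of $K$ acting on $\cP(\fv)$, a consequence of the fact that $(\check N,K)$ is itself a nilpotent Gelfand pair of Heisenberg or quaternionic-Heisenberg type (being the central reduction of $(N,K)$ along $\fz_0\to 0$), combined with Carcano's characterisation. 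The results already at hand in Propositions \ref{structure}(iii)-(vi) and \ref{structure'} locate $V_m$ as the unique $K$-irreducible of its type in the harmonic subalgebra of $\cP(\fv)$, and together with the standard $\cP(\fv)^K$-freeness of $\cP(\fv)$ in our setting they yield the displayed module identity.

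Granted the generation identity, I would apply the Schwartz-class covariant version of the Schwarz--Mather theorem (G.~Schwarz's theorem for covariants together with the Schwartz-class refinement due to Mather): once $(\cP(\fv)\otimes \cV_m)^K$ is generated over $\cP(\fv)^K$ by a single element, every smooth $K$-equivariant Schwartz map $\fv\to \cV_m$ factors as $h\cdot\Phi$ with $h\in \cS(\fv)^K$ depending continuously on the input. Adjoining the parameter $t\in\check\fz$, on which $K$ acts trivially, does not affect the argument, and yields $H(v,t)=h(v,t)\Phi(v)$ with $h\in\cS(\check N)^K$, continuously in $H$.

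The main obstacle I expect is the precise formulation and proof of the generation identity. Propositions \ref{structure} and \ref{structure'} describe the $V_m$-isotypic components only inside specific bigraded pieces of the form $\cH^{m,m}(\fv)\otimes \cP^k(\fz_0)$, and ruling out additional hidden copies of $V_m$ elsewhere in $\cP(\fv)$ requires either a clean reference to the multiplicity-free/coisotropic structure of the $K$-action (from the Gelfand-pair property of $(\check N,K)$), or a direct case-by-case inspection using the explicit invariants in Table \ref{invariants} for the lines $6$--$10$. Once this is settled, the passage from polynomial covariants to Schwartz-class covariants with continuous dependence is the standard invariant-theoretic machinery already used scalar-wise in Proposition \ref{prop_cq_SMH}, and the conclusion follows.
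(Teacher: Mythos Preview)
Your proposal outlines a legitimate alternative route, and the generation identity $(\cP(\fv)\otimes\cV_m)^K=\cP(\fv)^K\cdot\Phi$ is indeed true. But your justification is imprecise in one spot: what you call ``bigraded multiplicity-freeness of $K$ on $\cP(\fv)$'' does not follow from Carcano's criterion, which only concerns the holomorphic part $\cP^{*,0}(\fv)$. The clean argument uses Proposition~\ref{structure} more directly: if a copy of $V_m$ sat inside $\cH^{m',m'}(\fv)$ with $m'\ne m$, pairing it with $W_m$ would produce a nonzero element of $(\cH^{m',m'}(\fv)\otimes\cP^m(\fz_0))^K$; for $m'>m$ this contradicts part~(i), while for $m'<m$ part~(v) forces it into $(V_{m'}\otimes\cP^m(\fz_0))^K$, contradicting part~(vi). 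Copies in $\cP^{s_1,s_2}(\fv)$ with $s_1\ne s_2$ are excluded since all invariants on $\fv\oplus\fz_0$ have equal bidegree in $v,\bar v$.

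The paper's proof takes a rather different and more self-contained path that avoids invoking any Schwartz-class covariant Schwarz--Mather theorem. From $H=\sum_j H_j\,e_j^{(m)}$ it forms the scalar $K$-invariant $\tilde H(v,\zeta,t)=\sum_j H_j(v,t)\,b_j^{(m)}(\zeta)$, a polynomial of degree $m$ in $\zeta\in\fz_0$, and applies the already established scalar decomposition (Lemma~\ref{decomposeG}) to write $\tilde H$ as $\sum_{m'\le m}\sum_{|\alpha|=m',\,[\alpha]+[\beta]=m} q^\beta(\zeta)\,\widetilde{p^\alpha}(v,\zeta)\,h_{\alpha\beta}(v,t)$ with $h_{\alpha\beta}\in\cS(\check N)^K$. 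Translated back to vector-valued language, the block indexed by $m'$ is a $\cV_{m'}$-equivariant function; since the $\cV_{m'}$ are mutually inequivalent (Proposition~\ref{structure}(vi)), only the term $m'=m$ survives, and this is exactly $h\cdot\Phi$. The paper's approach thus recycles the scalar machinery and needs no external theorem on covariants; your approach is conceptually more direct but requires either a citation or a separate argument for the Schwartz-class covariant version of Schwarz--Mather, which is not otherwise used in the paper.
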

\begin{proof} Reversing the argument above, from a $K$-equivariant function $H(v,t)=\sum_j H_j(v,t)e^{(m)}_j$ we can construct the $K$-invariant scalar-valued function $\tilde H(v,\zeta,t)=\sum_j H_j(v,t)b^{(m)}_j(\zeta)$
which satisfies the hypotheses of  Proposition \ref{prop_cq_SMH}.
Hence $\tilde H$ can be expressed as
$$
\tilde H(v,\zeta,t)=\sum_{m'\le m}\Big(\sum_{\begin{subarray}{c}
      [\al]+[\beta]=m \\  |\al|=m'
      \end{subarray}}q^\beta(\zeta)\widetilde{p^\al}(v,\zeta) h_{\al\beta}(v,t)\Big)\ ,
$$
with each $h_{\al\beta} \in \cS(\check N)^K$. Each term in parenthesis can be turned into a $K$-equivariant function with values in $\cV_{m'}$. Since the $\cV_{m'}$ are mutually inequivalent, the only non-zero term is the one with $m'=m$.
\end{proof}

\medskip

\begin{rmk} From this point on, we may completely disregard the special cases of lines 4 and 5, because in this abstract setting they are completely absorbed by those of line 6.
\end{rmk}

\medskip

We denote by $A^{(m)}_j\in\bD(\check N)$ the differential operators obtained from the polynomials $a^{(m)}_j$ by symmetrisation. Then 
\begin{equation}\label{Mm}
M_m=\sum_{j=1}^{\nu_m}e^{(m)}_jA^{(m)}_j
\end{equation}
is a $K$-equivariant differential operator mapping scalar valued functions on $\check N$ to $\cV_m$-valued functions.

The following statement is the key step in the proof of Theorem \ref{main}.

\begin{proposition}\label{Vm-operator}
Let $G$ be a $\cV_m$-valued, $K$-equivariant Schwartz function on $\check N$. Then $G$ can be expressed as
\begin{equation}\label{Gtou}
G(v,t)=M_m h(v,t)\ ,
\end{equation}
with $h\in\cS(\check N)^K$.

More precisely, given a Schwartz norm $\|\ \|_{(p)}$, the function $h$ can be found so that, for some $q=q(m,p)$, $\|h\|_{(p)}\le C_{m,p}\|G\|_{(q)}$.
\end{proposition}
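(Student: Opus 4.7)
The plan is to reduce the $\cV_m$-valued surjectivity problem to a scalar statement on $\cS(\check N)^K$ and then analyse it through a partial Fourier transform in the centre of $\check N$, exploiting the Bargmann model of $\check N$-representations and the metaplectic lift of the $K$-action.

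\emph{Step 1 (Reduction to a scalar operator).} Set $A_m(v) := \sum_{j=1}^{\nu_m} a_j^{(m)}(v)\,e_j^{(m)}$, the canonical $V_m$-valued polynomial from Proposition \ref{structure}(iii). By Lemma \ref{Vm-equivariant}, every $K$-equivariant $\cV_m$-valued Schwartz function on $\check N$ is of the form $g\cdot A_m$ for a unique $g \in \cS(\check N)^K$, depending continuously on it. Since $M_m h$ is automatically of this type when $h \in \cS(\check N)^K$, the prescription $M_m h = (Th)\cdot A_m$ defines a continuous linear operator $T: \cS(\check N)^K \to \cS(\check N)^K$, and the proposition reduces to producing a continuous right inverse for $T$ with polynomial control of seminorms.

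\emph{Step 2 (Bargmann realisation).} Perform a partial Fourier transform in the central coordinate $t \in \check\fz$. For each non-zero central character $\lambda$, Bargmann's construction realises the corresponding irreducible representation of the (possibly quaternionic) Heisenberg group $\check N$ on a Fock space $\cF_\lambda$ of holomorphic functions on $\fv$. The operator $A_j^{(m)}$, coming from symmetrisation of the bi-homogeneous polynomial $a_j^{(m)}$ of bi-degree $(m,m)$, becomes a polynomial operator of bi-degree $(m,m)$ in creation and annihilation variables on $\cF_\lambda$, and in particular preserves the grading by holomorphic degree.

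\emph{Step 3 (Multiplicity-free $K$-action and Schur).} The $K$-action on $\check N$ lifts to a unitary action on each $\cF_\lambda$ via the metaplectic representation, intertwining $K$ with the family $\{A_j^{(m)}\}$ in the tautological way. Since $(\check N, K)$ is itself a nilpotent Gelfand pair, $K$ acts with multiplicity one on $\cF_\lambda$, yielding a decomposition $\cF_\lambda = \bigoplus_\pi \cF_\lambda^\pi$ into distinct $K$-isotypic summands. A function $h \in \cS(\check N)^K$ passes under the group Fourier transform to an operator-valued field $\pi_\lambda(h)$ that commutes with $K$, hence acts by scalars $h_\pi(\lambda)$ on each $\cF_\lambda^\pi$; Schur's lemma together with absolute irreducibility of $V_m$ (Proposition \ref{structure}(iii)) makes $T$ act on symbols by multiplication by real scalars $t_\pi(\lambda)$ that shift the $K$-type label by $\tau_m$ in the Clebsch--Gordan sense.

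\emph{Step 4 (Main obstacle: non-vanishing and Schwartz bounds).} The substantive content, to be carried out in Sections \ref{sec_N'nonab} and \ref{sec_conclusion}, splits into two tasks. First, a case-by-case analysis of the pairs of Table \ref{invariants} must verify that $t_\pi(\lambda) \neq 0$ whenever the $K$-type $\pi$ appears in the domain of $T$ with a non-zero target; this is where the explicit structure of each $M_m$ and the identification of its kernel among $K$-types in $\cF_\lambda$ becomes indispensable. Second, inversion requires polynomial lower bounds on $|t_\pi(\lambda)|$ in both $\lambda$ and the Hermite-type labels of $\pi$, so that $T^{-1}g$ returns to a genuine Schwartz function on $\check N$ with the stated continuity estimate. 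I expect the non-vanishing step to be the principal obstacle: it rests on delicate combinatorial features of how the absolutely irreducible $V_m$ couples with individual $K$-types in $\cF_\lambda$, while the quantitative bounds are more mechanical, following from the Hermite-expansion characterisation of Schwartz class on Heisenberg groups.
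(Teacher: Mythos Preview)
Your architecture is essentially the paper's --- Bargmann model, multiplicity-free $K$-action on $\cF_\lambda$, Schur, and a non-vanishing statement for $d\pi(M_m)$ on the admissible $K$-types --- but there are two genuine problems.

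First, the scalarisation in Step~1 via $T$ does not give you what Step~3 claims. The map $h\mapsto Th$ is defined through Lemma~\ref{Vm-equivariant}, which factors a $K$-equivariant function as $(Th)\cdot A_m$; this $T$ is a continuous linear map on $\cS(\check N)^K$, but there is no reason for it to be a spherical Fourier multiplier, because multiplication by the polynomials $a_j^{(m)}(v)$ is not left-invariant on $\check N$ and has no clean image under $\pi_\lambda$. The paper's device is different and does produce a multiplier: it composes with the \emph{adjoint}, setting $f=M_m^*G$ and $U_m=M_m^*M_m$. Then $U_m\in\bD(\check N)^K$, so $U_m=u_m(\check\cD)$ for a genuine polynomial $u_m$ on $\bR^{d_0}$, and the equation becomes the scalar division problem $\check\cG f = u_m\cdot\check\cG h$ on $\Sigma_{\check\cD}$.

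Second, and more seriously, you have inverted the difficulty in Step~4. The non-vanishing of $d\pi(M_m)$ on $m$-admissible $V(\mu)$ is handled by an $\fs\fl_2$-triple argument (Proposition~\ref{nonvanishing}) and is comparatively clean. The Schwartz estimate, by contrast, is \emph{not} a matter of polynomial lower bounds plus a Hermite characterisation: the eigenvalues $u_m(\xi'(\lambda,\mu))$ carry a factor $|\lambda|^{2m}$ and hence tend to~$0$ as $\lambda\to0$, so no uniform polynomial lower bound exists, and pointwise inversion on $\{\lambda\ne0\}$ does not by itself yield a Schwartz function on $\check N$ --- the spectrum has a piece at $\lambda=0$ across which smoothness must be recovered. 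The paper handles this via two substantial steps: a divisibility lemma (any polynomial vanishing on the zero set $S_m\subset\Sigma_{\check\cD}$ of $u_m$ is divisible by $u_m$, proved case by case), and a Whitney-extension/Hadamard argument showing that if $\check\cG f$ vanishes on $S_m$ then one can choose a Schwartz extension divisible by $u_m$ with norm control. Only after this is the non-vanishing invoked, to cancel $M_m^*$ from $M_m^*(M_mh-G)=0$.
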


The proof requires some representation theoretic considerations that will be developed in the next subsections.

\bigskip

\subsection{The Bargmann representations of $\check N$}\hfill\\

\bigskip

The proof requires Fourier analysis on $\check N$.
As we mentioned already, $\check N$ is either a Heisenberg group or (line 7) its quaternionic analogue, with a 3-dimensional centre. It will suffice to restrict attention to the infinite-dimensional representations.
\medskip

When $\check N$ is a Heisenberg group, i.e., $\check\fn=\fv\oplus\bR$, we see from 
Table~\ref{vinberg} that $\fv$ is a complex space (whose dimension we denote by $\kappa$), with $K$ acting on it by unitary transformations. We use the Bargmann-Fock model of its representations, that we briefly describe.

If $(v_1,\dots,v_\kappa)$ are linear complex coordinates on $\fv$, the $2\kappa$ left-invariant vector fields
\begin{equation}\label{vectorfields}
Z_j=\de_{v_j}-\frac i4\bar v_j\de_t\ ,\qquad \bar Z_j=\de_{\bar v_j}+\frac i4 v_j\de_t
\ ,\qquad j=1,\ldots,\kappa,
\end{equation}
generate ${\check\fn}^\bC$.

For $\la>0$, the Bargmann representation $\pi_\la$ acts on the Fock space $\cF_\la(\fv)$, defined as the space of holomorphic functions $\ph$ on $\fv$ such that
$$
\|\ph\|_{\cF_\la}^2=(\la/2\pi)^\kappa\int_{\fv}|\ph(v)|^2e^{-\frac\la2|v|^2}\,dv<\infty\ ,
$$
and is such that
\begin{equation}\label{Z>0}
d\pi_\la (Z_j)=\de_{v_j}\ ,\qquad d\pi_\la(\bar Z_j)=-\frac\la2 v_j\ .
\end{equation}

For $\la<0$, $\pi_\la$ acts on $\cF_{|\la|}$ as $\pi_\la(v,t)=\pi_{|\la|}(\bar v,-t)$, so that the r\^oles of $Z_j$ and $\bar Z_j$ are interchanged:
\begin{equation}\label{Z<0}
d\pi_\la (Z_j)=\frac\la2 v_j\ ,\qquad d\pi_\la(\bar Z_j)=\de_{v_j}\ .
\end{equation}

By the Stone-von Neumann theorem, the Bargmann representations $\pi_\la$, $\la\ne0$, cover the whole dual object $\widehat{\check N}$ up to a set of Plancherel measure zero.
\medskip

The case $\check\fn=\fv\oplus\IM\bH$, with $\fv=\bH^n$, requires some modifications. For every $\mu\ne0$ in $\IM\bH$, with polar decomposition $\mu=\la \varsigma$, $\la=|\mu|>0$, there is an analogous representations $\pi_\mu=\pi_{\la,\varsigma}$ which factors to the quotient algebra $\check\fn_\varsigma=\fv_\varsigma\oplus(\IM\bH/\varsigma^\perp)$. This is a Heisenberg algebra, with $\fv_\varsigma$ denoting $\fv$ endowed with the complex structure induced by the unit quaternion $\varsigma$. Then $\pi_{\la,\varsigma}$ is  the Bargmann representation of index $\la$ of $\check\fn_\varsigma$, acting on the Fock space $\cF(\fv_\varsigma)$. Again, the $\pi_\mu$ cover $\widehat{\check N}$ up to a set of Plancherel measure zero.
 
 \medskip
 
For the sake of a unified discussion, we drop the subscripts $\la$ or $\mu$, and simply write $\pi$ and $\cF$. Only when strictly necessary, we will reintroduce a parameter 
$\la>0$, leaving to the reader the obvious modifications for the other cases.
  
  In all cases, the fact that $K$ acts trivially on $\check\fz$ implies that each representation as  above is stabilised by $K$. 
 In fact, if $\sigma$ denotes the representation of ${\rm U}_\kappa$ on functions on $\fv$ given by
\begin{equation}\label{sigma}
 \big(\sigma(k)\ph\big)(v)=\ph(k\inv v)\ ,
\end{equation}
one has the identity
 $$
 \pi(kv,t)=\sigma(k)\pi(v,t)\sigma(k\inv)\ .
 $$

 The representation $\pi$ maps functions $H\in\cS(\check N)\otimes \cV_m$ into operators $\pi(H)\in\cL(\cF)\otimes\cV_m\cong \cL(\cF,\cF\otimes\cV_m)$, depending linearly on $H$ and such that
 $$
 \pi(h\otimes w)=\pi(h)\otimes w\ ,\qquad h\in\cS(\check N)\ .
 $$
 
 If $H$ is $K$-equivariant, then
 \begin{equation}\label{equivarianceH}
 \pi(H)\sigma(k)=\big(\sigma\otimes \tau_m\big)(k)\pi(H)\ ,
 \end{equation}
 for all $k\in K$. Similarly, the equivariance of $M_m$ implies that, for $k\in K$,
 \begin{equation}\label{equivarianceM}
  d\pi(M_m)\sigma(k)=\big(\sigma\otimes\tau_m\big)(k)d\pi(M_m)\ ,
   \end{equation}
   i.e., $\pi(H)$ and $d\pi(M_m)$ intertwine $\sigma$ with $\sigma\otimes\tau_m$. 
   
   With an abuse of notation, we denote the restriction of $\sigma$ to $K$ by the same symbol.
   
\medskip

Since $(\check N,K)$ is a n.G.p., the representation $\sigma$ decomposes into irreducibles without multiplicities. We can  write 
\begin{equation}\label{Fdecomposition}
\cF={\sum_{\mu\in\mathfrak X }}^{\!\oplus}V(\mu)\ ,
\end{equation}
for some set $\mathfrak X$ of dominant weights $\mu$ of $K$. For each $\mu$, we denote by $R(\mu)$ the representation of $K$ with highest weight $\mu$.
Each $V(\mu)$ is contained in some $\cP^{s,0}(\fv)$ with $s=s(\mu)$, since these subspaces are obviously invariant under $\sigma$. 

 In particular, $V(\mu)$ consists of $C^\infty$-vectors for $\pi$, so that $d\pi(M_m)$ is well defined on $V(\mu)$.
 
 Notice that, for the pairs in the first block of Table \ref{invariants}, each $\cP^{s,0}(\fv)$ is itself irreducible. Only for the pairs in the second block, different $V(\mu)$'s may be contained in the same $\cP^{s,0}(\fv)$.
 \medskip
 
The following lemma   in invariant theory  will be important in the next proof.
 
 \begin{lemma}\label{multiplicities}
 Let $R(\mu_1),R(\mu_2),R(\mu_3)$ be three irreducible finite dimensional representations of a complex group $G$ on spaces $V_1,V_2,V_3$ respectively. Denote by $c_{\mu_i}(\mu_j,\mu_k)$ the multiplicity of $R(\mu_i)$ in 
 $R(\mu_j)\otimes R(\mu_k)$. Then
 $$
 c_{\mu_i}(\mu_j,\mu_k)=\dim(V_i'\otimes V_j\otimes V_k)^G=c_{\mu_j'}(\mu_k,\mu_i')\ ,
 $$
where $\mu'$ stands for the highest weight of the dual representation and 
$V'$ for the dual vector space of $V$.
Over $\mathbb R$ the statement modifies as follows:
 $$
\dim(V_i\otimes V_i')^G c_{\mu_i}(\mu_j,\mu_k)=
     \dim(V_i'\otimes V_j\otimes V_k)^G=
  c_{\mu_j'}(\mu_k,\mu_i')\dim(V_j\otimes V_j')^G\ .
 $$
%
%
 \end{lemma}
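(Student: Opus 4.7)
The plan is to derive the lemma entirely from two elementary facts: the canonical isomorphism $\mathrm{Hom}_G(V,W)\cong (V'\otimes W)^G$ (for finite-dimensional $V$), together with Schur's lemma. Over $\mathbb{C}$, Schur gives $\mathrm{End}_G(V_i)\cong\mathbb{C}$; over $\mathbb{R}$, it gives a real division algebra $D_i\in\{\mathbb{R},\mathbb{C},\mathbb{H}\}$.

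For the complex statement I would proceed as follows. Complex Schur identifies $c_{\mu_i}(\mu_j,\mu_k)$ with $\dim_{\mathbb{C}}\mathrm{Hom}_G(V_i,V_j\otimes V_k)$, and the Hom--invariants isomorphism above then yields the first equality. For the second equality, the main observation is the cyclic symmetry
$$
(V_i'\otimes V_j\otimes V_k)^G\cong (V_j\otimes V_k\otimes V_i')^G;
$$
re-reading the right-hand side as $\mathrm{Hom}_G(V_j',V_k\otimes V_i')$ and applying Schur once more identifies its dimension with $c_{\mu_j'}(\mu_k,\mu_i')$.

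For the real statement the same calculation runs through, except that over $\mathbb{R}$ the multiplicity of an irreducible $V_i$ inside a $G$-module $W$ is $\dim_{\mathbb{R}}\mathrm{Hom}_G(V_i,W)/\dim_{\mathbb{R}}D_i$, not $\dim_{\mathbb{R}}\mathrm{Hom}_G(V_i,W)$ itself. Using the canonical identification $\mathrm{End}_G(V_i)\cong (V_i\otimes V_i')^G$, this inserts the factor $\dim(V_i\otimes V_i')^G$ on the left of the claimed identity; the symmetric factor $\dim(V_j\otimes V_j')^G$ on the right appears when the same reasoning is applied to the multiplicity of $V_j'$ in $V_k\otimes V_i'$, using that $D_{j'}\cong D_j$ as real algebras. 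I do not expect any genuine obstacle: the whole argument is a repackaging of Schur's lemma. The only point requiring some care is tracking the duals when cyclically permuting factors and verifying that the division-algebra correction enters exactly in the form stated.
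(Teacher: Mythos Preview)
Your proposal is correct and follows essentially the same route as the paper: both proofs reduce the identities to Schur's lemma via the identification $\mathrm{Hom}_G(V,W)\cong (V'\otimes W)^G$, then permute the tensor factors, and handle the real case by noting that $\dim(V_i'\otimes V_i)^G$ may exceed $1$. Your write-up is in fact more explicit about the Hom--invariants isomorphism and the division-algebra correction than the paper's brief sketch.
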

\begin{proof}
Recall that by a straightforward consequence of Schur's lemma, 
for irreducible complex representations, we have 
$\dim(V_i'{\otimes} V_j)^G=0$ if $V_i\not\cong V_j$
and $\dim(V_i'{\otimes} V_j)^G=1$ if $V_i\cong V_j$. 
Next, $c_{\mu_i}(\mu_j,\mu_k)$ counts how many times $V_i$ appears 
in $V_j{\otimes}V_k$ and $c_{\mu'}(\mu_k,\mu_i')$ how many times 
$V_j'$ appears in $V_i'{\otimes}V_k$. 
Finally, over the real numbers the dimension of $(V_i'{\otimes} V_J)^G$
can be larger than one. 
\end{proof}

\begin{proposition}\label{intertwiningPhi}
Let $\Phi$ be a linear operator, defined on the algebraic sum of the $V(\mu)$, 
$\mu\in\mathfrak X$, with values in $\cF\otimes\cV_m$, and intertwining $\sigma$ with 
$\sigma\otimes\tau_m$. Then
\begin{enumerate}
\item[\rm(i)] for every $\mu$,
$$
\Phi:V(\mu)\longrightarrow V(\mu)\otimes\cV_m\ {\it ;}
$$
\item[\rm(ii)] $\Phi_{|_{V(\mu)}}=0$, unless $R(\mu)\subset R(\mu)\otimes\tau_m$;
\item[\rm(iii)] $\Phi_{|_{V(\mu)}}=0$ if $s(\mu)<m$.
\end{enumerate}
\end{proposition}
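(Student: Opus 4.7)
The overall plan is to combine Schur's lemma with the multiplicity-free structure of $\cF$ under $K$ and a harmonic decomposition of $\cP(\fv)$. Since $\Phi$ intertwines $\sigma$ with $\sigma\otimes\tau_m$, its restriction to each irreducible $V(\mu)$ is a $K$-equivariant map into $\cF\otimes\cV_m$, so Schur's lemma places $\Phi(V(\mu))$ in the $R(\mu)$-isotypic component of $\cF\otimes\cV_m=\bigoplus_{\mu'\in\mathfrak X}V(\mu')\otimes\cV_m$. The essential content of (i) is that this isotypic is contained in the single summand $V(\mu)\otimes\cV_m$; equivalently, that the multiplicities $c_\mu(\mu',\tau_m)=\dim\mathrm{Hom}_K(R(\mu),R(\mu')\otimes\tau_m)$ vanish for $\mu'\neq\mu$.

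To extract these multiplicities, I would use the realizations $V(\mu')\subset\cP^{s(\mu'),0}(\fv)$ and $\tau_m\cong V_m\subset\cH^{m,m}(\fv)$ supplied by Proposition \ref{structure}. By Frobenius/Schur, $c_\mu(\mu',\tau_m)>0$ is equivalent to $V_m$ appearing in $V(\mu')^*\otimes V(\mu)$, and the multiplication map furnishes a $K$-equivariant embedding $V(\mu')^*\otimes V(\mu)\hookrightarrow\cP^{s(\mu),s(\mu')}(\fv)$. Combining the harmonic decomposition $\cP^{a,b}(\fv)=\bigoplus_{j}|v|^{2j}\cH^{a-j,b-j}(\fv)$ with Proposition \ref{structure}(iii),(vi) — namely, that $V_m$ is absolutely irreducible and distinct $V_m$'s are mutually inequivalent, together with a central-weight argument when $K$ contains a scalar $\mathrm U(1)$ — one sees that $V_m$ can occur in $\cH^{a-j,b-j}$ only when $a-j=b-j=m$. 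This forces $s(\mu)=s(\mu')\geq m$, which already yields (iii) (no admissible $\mu'$ when $s(\mu)<m$) and reduces (i) to the claim that among $\mu'\in\mathfrak X$ with $s(\mu')=s(\mu)$ only $\mu'=\mu$ satisfies $c_\mu(\mu',\tau_m)\neq 0$. I would close this step using the multiplicity-freeness of $\cF$ under $K$ (so the $V(\mu)\subset\cP^{s,0}(\fv)$ are pairwise $K$-inequivalent) together with a finer analysis of how the unique copy of $V_m$ in $\cP^{s,s}(\fv)$ distributes among the blocks $V(\mu')^*\otimes V(\mu)$, via a case-by-case inspection of Table \ref{invariants}.

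Part (ii) is then immediate from (i) and Schur's lemma: $\Phi|_{V(\mu)}\in\mathrm{Hom}_K(V(\mu),V(\mu)\otimes\cV_m)\cong\mathrm{Hom}_K(R(\mu),R(\mu)\otimes\tau_m)$, which vanishes when $R(\mu)$ is absent from $R(\mu)\otimes\tau_m$. Part (iii) follows from (ii) by specializing the harmonic argument to $\mu'=\mu$: since $\mathrm{End}(R(\mu))\cong R(\mu)^*\otimes R(\mu)\hookrightarrow\cP^{s(\mu),s(\mu)}(\fv)$ and $V_m$ requires bi-degree at least $m$, $V_m\not\subset\mathrm{End}(R(\mu))$ when $s(\mu)<m$, hence $R(\mu)\not\subset R(\mu)\otimes\tau_m$ and (ii) forces $\Phi|_{V(\mu)}=0$.

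The hardest point will be closing the last step of (i): even after $s(\mu)=s(\mu')$ is established via the harmonic decomposition, for the two-factor pairs in lines 8--10 of Table \ref{invariants} the space $\cP^{s,0}(\fv)$ generally splits into several distinct $K$-irreducibles, and I expect a case-by-case check — exploiting the explicit generators of Table \ref{invariants} and possibly a Howe-duality description of $\cP(\fv)$ — will be needed to rule out $\mu'\neq\mu$ with matching~$s$.
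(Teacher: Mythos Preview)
Your handling of (ii) and (iii) aligns with the paper's: once (i) is established, (ii) is Schur's lemma, and (iii) follows because $V_m$ cannot occur in $\cP^{s,s}(\fv)$ for $s<m$, via Proposition~\ref{structure}(v),(vi) and Proposition~\ref{structure'}(ii').

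For (i), your route diverges from the paper and is left incomplete. You correctly reduce the question to showing that $R(\mu)\subset R(\mu_1)\otimes\tau_m$ with $\mu,\mu_1\in\fX$ forces $\mu=\mu_1$, and you propose to reach this through a harmonic decomposition of $\cP^{s(\mu),s(\mu_1)}(\fv)$ followed by a case-by-case check for lines 8--10. The paper avoids any case split by a structural fact you did not invoke: for every nonzero $\zeta\in\fz_0$, the stabiliser $K_\zeta$ still yields a nilpotent Gelfand pair $(\check N,K_\zeta)$ (Remark~\ref{stabiliser}), so $\cF$ remains multiplicity-free under the smaller group $K_\zeta$. One rewrites the hypothesis, via Lemma~\ref{multiplicities}, as the existence of a nonzero $p(v,z)\in\big(\overline{V(\mu)}\otimes V(\mu_1)\otimes W_m\big)^K$; evaluating at a generic $\zeta$ gives a nonzero $K_\zeta$-invariant in $\overline{V(\mu)}\otimes V(\mu_1)$, so $V(\mu)$ and $V(\mu_1)$ share a $K_\zeta$-irreducible constituent, and $K_\zeta$-multiplicity-freeness forces $\mu=\mu_1$.

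Your strategy is not unreasonable --- the central-weight argument does pin down $s(\mu)=s(\mu_1)$ whenever $K$ contains a scalar ${\rm U}_1$ acting on $\fv$ --- but, as you yourself note, the decisive step for lines 8--10 is left as a promissory case-by-case inspection, and line 7 (where $K=\Sp_n$ carries no central torus acting by scalars on $\fv$) is not covered by your bi-degree argument either. The stabiliser trick of Remark~\ref{stabiliser} closes (i) uniformly in a few lines and is the key idea your proposal is missing.
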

\begin{proof} 
Let $P_\mu$ be the orthogonal projection of $\cF$ onto $V(\mu)$. If $\mu_1\in\fX$, 
$(P_{\mu_1}\otimes {\rm Id})\Phi_{|_{V(\mu)}}$ intertwines $R(\mu)$  with 
$R(\mu_1)\otimes\tau_m$. Hence 
$(P_{\mu_1}\otimes {\rm Id})\Phi_{|_{V(\mu)}}=0$ unless 
$R(\mu)\subset R(\mu_1)\otimes\tau_m$. 

Take $W_m$, the linear span of the polynomials $b^{(m)}_j(z)$ in \eqref{tildep1m}, as a concrete realisation of 
$\tau_m$.
Take also $V(\mu_1)$ as a concrete realisation of $R(\mu_1)$
 and $\overline{V(\mu)}$  as concrete realisation of the (complex) contragredient representation $R(\mu)'$ of $R(\mu)$. 
By Lemma~\ref{multiplicities},
\begin{equation}\label{leftright}
R(\mu)\subset R(\mu_1)\otimes\tau_m \Longleftrightarrow 
 \big(\overline{V(\mu)}{\otimes} V(\mu_1){\otimes} W_m\big)^K\ne\{0\}\ .
\end{equation}

By Remark~\ref{stabiliser},
for a nonzero element $\zeta\in\fz_0$,  
the pair $(\check N,K_\zeta)$ is also a nilpotent Gelfand pair, so that $\cF(\fv)$ decomposes without multiplicities under the action of $K_\zeta$. 
Let $p(v,z)$ be a nonzero element of 
$\big(\overline{V(\mu)}{\otimes} V(\mu_1){\otimes} W_m\big)^K$, and fix $\zeta\in \fz_0$ such that $p_0(v)=p(v,\zeta)$ is not identically zero. Then $p_0$ is 
$K_\zeta$-invariant and contained in $\overline{V(\mu)}{\otimes}{V(\mu_1)}$. Hence $V(\mu)$ and $V(\mu_1)$ must contain two $K_\zeta$-invariant, irreducible, equivalent subspaces. By multiplicity freeness, this forces that $\mu=\mu_1$ and we obtain (i).

At this point, (ii) is obvious.

To verify (iii), observe that 
the subspaces $V_m$ are mutually inequivalent by 
Propositions~\ref{structure}(vi), 
\ref{structure'}(ii'). Hence 
$V_m$ does not appear in $\cP^{s,s}(\fv)$ for 
$s<m$.
\end{proof}

\bigskip

\subsection{Multiplicity of $R(\mu)$ in $R(\mu)\otimes\cV_m$}\hfill

\bigskip

We need at this point to obtain, for any $m$,
\begin{enumerate}
\item [(a)] a precise description of the ``$m$-admissible'' weights $\mu$, i.e., such that $R(\mu)\subset R(\mu)\otimes\cV_m$;
\item [(b)] that, for such a pair, $R(\mu)$ is contained in $R(\mu)\otimes\cV_m$ without multiplicities.
\end{enumerate}

Point (a) above forces us to go into a case by case analysis, from which we will obtain sets of parameters for the $m$-admissible weights. This analysis will also give us a positive answer to point (b). 

For a simple complex (or compact) group, we let 
$\varpi_i$ denote its fundamental dominant weights. 

\medskip

\subsubsection{Pairs in the first block of Table \ref{invariants}}\quad

\medskip

In these cases we know that $V(\mu)=\cP^{s,0}(\fv)$ for some $s$.

\begin{proposition}\label{firstblock}
Let $\fv=\bC^n$, with $K={\rm (S)U}_n$, or $\fv=\bC^{2n}$ with $K={\rm Sp}_n$. Then $\cP^{s,0}(\fv)$ is contained in $\cP^{s,0}(\fv)\otimes \cV_m$ if and only if $s\ge m$, and in this case with multiplicity one.
\end{proposition}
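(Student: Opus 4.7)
The plan is to compute the multiplicity $N(s,m) := \dim \mathrm{Hom}_K\!\bigl(\cP^{s,0}(\fv),\, \cP^{s,0}(\fv) \otimes \cV_m\bigr)$ and show $N(s,m)=1$ when $s\ge m$ and $N(s,m)=0$ otherwise. Using the isomorphism $(\cP^{s,0}(\fv))^{\ast} \cong \overline{\cP^{s,0}(\fv)}$ together with the self-duality of $\cV_m$ in every case at hand (for unitary $K$, the highest weight identified below is preserved by the Dynkin involution; for $K=\mathrm{Sp}_n$ every irreducible is self-dual), Schur's lemma gives
$$N(s,m) \;=\; \dim\mathrm{Hom}_K\bigl(V_m,\,\cP^{s,0}(\fv) \otimes \overline{\cP^{s,0}(\fv)}\bigr).$$

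First I would identify $V_m$ as an explicit irreducible $K$-module via its highest weight, combining Propositions \ref{structure}(iii) and \ref{structure'}(ii'). For $K$ one of $\mathrm{U}_n$, $\mathrm{SU}_n$, $\mathrm{U}_{2n+1}$, $\mathrm{SU}_{2n}$ acting on $\fv = \bC^{n'}$ (with $n' = n$, $n$, $2n+1$, $2n$ respectively), $V_m$ has highest weight $m(\varpi_1 + \varpi_{n'-1})$ and in fact coincides with the full harmonic space $\cH^{m,m}(\fv)$, since the latter is already $\mathrm{SU}_{n'}$-irreducible. For $K = \mathrm{Sp}_n$ acting on $\fv = \bC^{2n}$, the fact that $\fz_0 \cong \sp_n$ has highest weight $2\varpi_1$ forces $V_m = V(2m\varpi_1) = S^{2m}V$, which is a proper submodule of $\cH^{m,m}(\fv)$ for $n\ge 2$.

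Next I would decompose $\cP^{s,0}(\fv) \otimes \overline{\cP^{s,0}(\fv)} \cong S^s V^{\ast} \otimes S^s V$ as a $K$-module. In the unitary cases, the classical $\mathrm{GL}_{n'}$ Pieri formula yields
$$S^s V^{\ast} \otimes S^s V \;=\; \bigoplus_{k=0}^{s} V(k\varpi_1 + k\varpi_{n'-1}),$$
each summand appearing with multiplicity one, whence $N(s,m)=1$ iff $m\le s$. In the symplectic case, using $V \cong V^{\ast}$ for $\mathrm{Sp}_n$,
$$N(s,m) \;=\; \dim\bigl(S^s V \otimes S^s V \otimes S^{2m} V\bigr)^{\mathrm{Sp}_n},$$
which is the dimension of the space of $\mathrm{Sp}_n$-invariant tri-homogeneous polynomials on $V^{\oplus 3}$ of multi-degree $(s,s,2m)$. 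By the first fundamental theorem of invariant theory for the symplectic group, such invariants are generated by the three pairings $\omega_{ij}=\omega(v_i,v_j)$, $1\le i<j\le 3$, and these are algebraically independent since $3\le 2n$. A monomial $\omega_{23}^{a}\omega_{13}^{b}\omega_{12}^{c}$ has multi-degree $(b+c,\,a+c,\,a+b)$; matching $(s,s,2m)$ forces $b=c=m$ and $a=s-m$, which admits a (unique) non-negative integer solution precisely when $s\ge m$. The main obstacle is the identification of $V_m$ in the symplectic case, where $V_m \subsetneq \cH^{m,m}(\fv)$; once pinned down via Proposition \ref{structure}(iii), the symplectic count is entirely elementary and the unitary case reduces to a standard application of Pieri's rule.
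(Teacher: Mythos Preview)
Your overall strategy is sound and parallels the paper's use of Lemma~\ref{multiplicities} and the equivalence~\eqref{leftright}: both reduce the question to counting the multiplicity of $V_m$ inside $\cP^{s,s}(\fv)$. For the unitary cases (lines 4--6) your argument is correct; there $V_m=\cH^{m,m}(\fv)$ is the full harmonic space, and the harmonic decomposition of $\cP^{s,s}(\fv)$ gives the multiplicity immediately.

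The gap is in the symplectic case (line~7). Your claim that $\fz_0\cong\sp_n$ is wrong: $\fz_0=HS^2_0\bH^n$ has real dimension $(n-1)(2n+1)=2n^2-n-1$, not $n(2n+1)=\dim\sp_n$. In fact $\fz_0$ is the $\mathrm{Sp}_n$-invariant complement of $\sp_n$ in $\su_{2n}$ (Proposition~\ref{Lj-u}(ii)), hence $\fz_0\cong V(\varpi_2)=\Lambda^2_0\bC^{2n}$. Consequently $V_m$ has highest weight $m\varpi_2$, not $2m\varpi_1$; it is \emph{not} $S^{2m}V$, and your FFT count on $V^{\oplus 3}$ computes the multiplicity of the wrong module. (By accident that multiplicity is also $1$ precisely when $s\ge m$, but this does not prove the proposition.) A direct FFT argument for $V(m\varpi_2)$ is not available in the same elementary form, since this representation is not a symmetric power of $V$.

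The paper avoids identifying $V_m$ explicitly. Its argument is uniform over lines 4--7: because the only element of $\rho_\fv$ is $|v|^2$, the basis of Corollary~\ref{basis} forces every invariant in $\cP^{s,0}(\fv)\otimes\cP^{0,s}(\fv)\otimes W_m$ to be a scalar multiple of $|v|^{2(s-m)}\widetilde{p_1^m}$, which exists exactly when $s\ge m$. Adopting this reasoning would repair the symplectic case without any further representation-theoretic computation.
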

\begin{proof} 
We know from Propositions~\ref{structure} (i) and \ref{structure'} (i'), that $\cP^{s,0}(\fv)$ is not contained in $\cP^{s,0}(\fv)\otimes \cV_m$ if $s<m$. We suppose now that $s\ge m$ and apply the equivalence \eqref{leftright}. Since the only fundamental invariant depending only on $v$ is $|v|^2$, there is exactly one invariant (up to scalars) in $\cP^{s,0}(\fv)\otimes \cP^{0,s}(\fv)\otimes W_m$, namely $|v|^{2(s-m)}\widetilde{p_1^m}$.

By Lemma \ref{multiplicities}, this gives existence and uniqueness of a subspace of 
$\cP^{s,0}(\fv)\otimes \cV_m$ equivalent to $\cP^{s,0}(\fv)$.
\end{proof}
\medskip

\medskip

\subsubsection{Pair of line 8}\quad

\medskip


The action of ${\rm SU}_n$ on the ${\mathbb C}^{n}$-factor in $\gt v$
extends to the action of ${\rm SL}_n(\mathbb C)$. 
Depending on the ordering of simple roots, this latter action may have 
the highest weight either $\varpi_1$ or
$\varpi_{n-1}$. For convenience we assume that this highest weight is $\varpi_{n-1}$.
Also let $S^i$ denote the representation of ${\rm SU}_2$ on $\cP^{i,0}(\bC^2)$ and by $\chi^s$ the $s$-th power of the identity character on ${\rm U}_1$.
Then, cf. \cite{knop-mf},
$$
\sigma_{|_{\cP^{s,0}(\fv)}}=\sum_{i+2j=s}R(i\varpi_1+j\varpi_2)\otimes S^i\otimes\chi^s\ .
$$

We call $R_{s,i}$ (with $0\le i\le s$, $s-i\in2\bN$) the $i$-th summand above, and $V_{s,i}$ the corresponding subspace of $\cP^{s,0}(\fv)$.

\begin{proposition}\label{line8}
$R_{s,i}$ is contained in $V_{s,i}\otimes \cV_m$ if and only if $i\ge m$, and in this case with multiplicity one.
\end{proposition}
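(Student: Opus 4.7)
The plan is, as in Proposition~\ref{firstblock}, to turn the statement into a dimension count. By the equivalence~\eqref{leftright} and the complex version of Lemma~\ref{multiplicities}, the multiplicity of $R_{s,i}$ in $V_{s,i}\otimes\cV_m$ equals
\[
\dim\bigl(\overline{V_{s,i}}\otimes V_{s,i}\otimes\cV_m\bigr)^K,
\]
and I intend to compute this by decomposing each tensor factor under $K=\mathrm{U}_1\times\SU_2\times\SU_n$.

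First I would identify $\cV_m$ as a $K$-module. By Proposition~\ref{structure}(iii), $\cV_m\sim V_m\sim W_m\subset\cH^m(\fz_0)$. For the pair at line~8 the only fundamental $\fz_0$-invariant is $q_1(z)=|z|^2$, so $\cH^m(\fz_0)$ is the space of usual harmonic polynomials of degree $m$ on the three-dimensional space $\fz_0=\su_2$; under the adjoint $\SU_2$-action this is precisely the irreducible module $S^{2m}$, while $\mathrm{U}_1$ and $\SU_n$ act trivially. Being a nonzero $K$-submodule of an irreducible module, $W_m=\cH^m(\fz_0)$, hence
\[
\cV_m \;\cong\; \chi^0\boxtimes S^{2m}\boxtimes\mathbf{1}.
\]

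Setting $j=(s-i)/2$, the decomposition of $V_{s,i}$ under $\mathrm{U}_1\times\SU_2\times\SU_n$ is $\chi^s\boxtimes S^i\boxtimes R(i\varpi_1+j\varpi_2)$; its complex conjugate is $\chi^{-s}\boxtimes S^i\boxtimes R(i\varpi_{n-1}+j\varpi_{n-2})$, using that $S^i$ is self-dual. Thus
\[
\overline{V_{s,i}}\otimes V_{s,i}\otimes\cV_m \;\cong\; \chi^0\boxtimes\bigl(S^i\otimes S^i\otimes S^{2m}\bigr)\boxtimes\bigl(R(i\varpi_{n-1}+j\varpi_{n-2})\otimes R(i\varpi_1+j\varpi_2)\bigr).
\]
The $\mathrm{U}_1$-factor is trivial; on $\SU_n$ the two tensor factors are mutually dual, so by Schur's lemma the invariant subspace is one-dimensional; on $\SU_2$, Clebsch-Gordan gives $S^i\otimes S^i=\bigoplus_{\ell=0}^{i}S^{2\ell}$, so $S^{2m}$ occurs (with multiplicity one) in $S^i\otimes S^i$ precisely when $m\le i$, and then $S^{2m}\otimes S^{2m}$ contributes a one-dimensional invariant.

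Multiplying the three dimensions, $\dim(\overline{V_{s,i}}\otimes V_{s,i}\otimes\cV_m)^K$ equals $1$ when $i\ge m$ and $0$ otherwise, which is exactly the content of the proposition. The only step requiring a little care is the identification $\cV_m\cong S^{2m}$ under $\SU_2$; once this is in place the remainder is a routine application of Clebsch-Gordan together with Schur's lemma on the $\SU_n$-factor.
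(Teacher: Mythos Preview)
Your proof is correct and follows essentially the same route as the paper: both identify $\cV_m$ with the $\SU_2$-module $S^{2m}$ (with $\mathrm{U}_1$ and $\SU_n$ acting trivially) and reduce the question to Clebsch--Gordan for $\SU_2$. The only cosmetic difference is that the paper checks directly whether $S^i$ occurs in $S^i\otimes S^{2m}$, whereas you pass through Lemma~\ref{multiplicities} to the equivalent condition that $S^{2m}$ occur in $S^i\otimes S^i$; the two are of course dual formulations of the same count.
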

\begin{proof}
Notice that both ${\rm SU}_n$ and the centre of ${\rm U}_2$ act trivially on $\fz_0$ and that the remaining factor ${\rm SU}_2$ of $K$ acts on $W_m$ by $S^{2m}$. Then we want to find when it is true that $R_{s,i}\subset R_{s,i}\otimes S^{2m}$. We have
\begin{equation}\label{containment}
\begin{aligned}
R_{s,i}\otimes S^{2m}&=R(i\varpi_1+j\varpi_2)\otimes (S^i\otimes S^{2m})\otimes\chi^s\\
&=R(i\varpi_1+j\varpi_2)\otimes (S^{i+2m}\oplus S^{i+2m-2}\oplus\cdots\oplus  S^{|2m-i|})\otimes\chi^s\ .
\end{aligned}
\end{equation}

It is quite clear that we find the summand $S^i$ in the sum in parentheses if and only if $i\ge|2m-i|$, i.e., $i\ge m$, and in this case it appears once and only once.
\end{proof}

\medskip

\subsubsection{Pair of line 9}\label{subspaces}
\quad

\medskip

With the same notation of the previous case, we have, cf. \cite{knop-mf},
$$
\sigma_{|_{\cP^{s,0}(\fv)}}=\sum_{\begin{subarray}{c}i+2j\le s\\  s-i\in2\bN\end{subarray}}R(i\varpi_1+j\varpi_2)\otimes S^i\otimes\chi^s=\sum_{\begin{subarray}{c}i+2j\le s\\  s-i\in2\bN\end{subarray}}R_{s,i,j}\ .
$$

\begin{proposition}\label{line9}
$R_{s,i,j}$ is contained in $V_{s,i,j}\otimes \cV_m$ if and only if $i\ge m$, and in this case with multiplicity one.
\end{proposition}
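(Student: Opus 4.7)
The plan is to mirror, essentially verbatim, the argument of Proposition \ref{line8}, the only adaptation being to carry along the additional parameter $j$ indexing an ${\rm Sp}_n$-isotype. First I would observe that ${\rm Sp}_n$ and the centre of ${\rm U}_2$ act trivially on $\fz_0=\su_2$, and hence trivially on the copy $W_m\cong\cV_m$ sitting inside $\cP^m(\fz_0)$. Consequently, the containment question $R_{s,i,j}\subset R_{s,i,j}\otimes\cV_m$ reduces to an ${\rm SU}_2$-only question: when does the representation $S^i$ occur inside $S^i\otimes\tau_m$, where $\tau_m$ denotes the restriction of $\cV_m$ to the remaining ${\rm SU}_2$-factor of $K$?

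Next I would identify $\tau_m\cong S^{2m}$, exactly as in line 8. This follows from Proposition \ref{structure}(iii): since ${\rm SU}_2$ acts on $\su_2$ by the adjoint representation $S^2$, the absolutely irreducible harmonic component $V_m\subset\cP^m(\ell)$ must be the top harmonic piece of $\cP^m(\su_2)$, which is $S^{2m}$, and $W_m\cong V_m$ as $K$-modules.

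A Clebsch--Gordan computation then completes the proof:
$$R_{s,i,j}\otimes\cV_m=R(i\varpi_1+j\varpi_2)\otimes\bigl(S^{i+2m}\oplus S^{i+2m-2}\oplus\cdots\oplus S^{|i-2m|}\bigr)\otimes\chi^s.$$
Since the ${\rm Sp}_n$-label and ${\rm U}_1$-label are prescribed by the first and third tensor factor, the containment $R_{s,i,j}\subset R_{s,i,j}\otimes\cV_m$ is equivalent to $S^i$ appearing in the middle sum, i.e.\ to $|i-2m|\le i$, equivalently $i\ge m$; when this holds $S^i$ appears with multiplicity one, giving the claimed uniqueness.

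The only potential obstacle is a sanity check: one should verify that the extra fundamental $\fv$-invariant $|x|^2|y|^2-(\trans xy)^2$ at line 9 does not perturb the identification $\tau_m\cong S^{2m}$. Since that invariant affects only the decomposition of $\cP^{s,0}(\fv)$ into ${\rm Sp}_n$-isotypes (which is precisely how the new index $j$ is produced) and does not touch $\cV_m$, which lives entirely on $\fz_0$, the identification is preserved and the line 8 argument transfers without alteration.
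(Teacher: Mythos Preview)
Your proposal is correct and follows essentially the same route as the paper: the paper's proof simply states that the identity \eqref{containment} from line~8 holds verbatim here and draws the same conclusion, which is exactly the reduction to the ${\rm SU}_2$ Clebsch--Gordan computation you carry out. Your additional remarks (the justification of $\tau_m\cong S^{2m}$ via Proposition~\ref{structure}(iii) and the sanity check about the extra $\fv$-invariant) are sound elaborations but not required by the paper's argument.
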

\begin{proof}
As before, we want to find when it is true that $R_{s,i,j}\subset R_{s,i,j}\otimes S^{2m}$. The same identity \eqref{containment} as above holds and we obtain the same conclusion.
\end{proof}

\medskip

\subsubsection{Pair of line 10}\quad

\medskip

We can identify $\fv$ with $\bC^8$, with ${\rm Spin}_7$ acting via the spin representation and ${\rm U_1}$ by scalar multiplication. 

The spin representation defines an embedding of 
${\rm Spin}_7$ into ${\rm SO}_8$.
Under the action of ${\rm U}_1\times{\rm SO}_8$, $\cP^{s,0}(\bC^8)$ decomposes into irreducibles as
$$
\cP^{s,0}(\bC^8)=\sum_{\begin{subarray}{c}i\ge0\\  s-i\in2\bN\end{subarray}}n(v)^{s-i}\cH^i=\sum_{\begin{subarray}{c}i\ge0\\  s-i\in2\bN\end{subarray}}V_{s,i}\ ,\qquad n(v)^2=v_1^2+\cdots+v_8^2\ ,
$$
see e.g. \cite[\S19.5]{FH-Rep}. 

The compact groups ${\rm Spin}_7$ and ${\rm SO}_8$ have the same invariants on 
$\bC^8$, see e.g. \cite[Theorem~7.5(8)]{FRY}.
Following the same line of arguments as was used in Section~\ref{sec_polynomials}, 
we can conclude that 
the above decomposition is also irreducible under the action of ${\rm Spin}_7$. 

Therefore
$$
\sigma_{|_{\cP^{s,0}(\fv)}}=\sum_{\begin{subarray}{c}i\ge0\\  s-i\in2\bN\end{subarray}}R\big(i\varpi_3\big)\otimes\chi^s=\sum_{2i\le s}R_{s,i}\ .
$$

\begin{proposition}\label{line10}
$R_{s,i}$ is contained in $V_{s,i}\otimes \cV_m$ if and only if $i\ge m$, and in this case with multiplicity one.
\end{proposition}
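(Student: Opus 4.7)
As in Propositions~\ref{line8} and~\ref{line9}, the crux is a multiplicity computation. By Lemma~\ref{multiplicities}, the multiplicity of $R_{s,i}$ in $V_{s,i}\otimes\cV_m$ equals $\dim(V_{s,i}^*\otimes V_{s,i}\otimes\cV_m)^K$. The ${\rm U}_1$-factor of $K$ acts on the three tensor factors with weights $-s,s,0$ summing to zero, so one reduces to $\mathrm{Spin}_7$-invariants. Combined with the self-duality of every irreducible $\mathrm{Spin}_7$-representation and the identification $\cV_m\cong R(m\varpi_1)$---which follows from Proposition~\ref{structure}(iii) together with the realisation in Remark~\ref{tablecomments}(e) of $\mathrm{span}\{\ell_j\}$ as a $\mathrm{Spin}_7$-copy of $\fz_0\cong\bR^7$---the desired multiplicity becomes
\[
\bigl[R(i\varpi_3)^{\otimes 2}\,:\,R(m\varpi_1)\bigr].
\]

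For the necessity $i\ge m$, a dominance-of-weights argument suffices. Working in the standard orthogonal basis $\epsilon_1,\epsilon_2,\epsilon_3$ of the Cartan of $B_3$, so that $\varpi_1=\epsilon_1$ and $\varpi_3=\tfrac12(\epsilon_1+\epsilon_2+\epsilon_3)$, one computes
\[
2i\varpi_3-m\varpi_1=(i-m)\alpha_1+(2i-m)\alpha_2+(3i-m)\alpha_3,
\]
where $\alpha_1=\epsilon_1-\epsilon_2$, $\alpha_2=\epsilon_2-\epsilon_3$, $\alpha_3=\epsilon_3$ are the simple roots. Non-negativity of all three coefficients is equivalent to $i\ge m$, so $m\varpi_1\not\preceq 2i\varpi_3$ when $i<m$, and the multiplicity vanishes.

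For sufficiency and multiplicity one when $i\ge m$, the plan is to realise $R(i\varpi_3)$ as $\cH^i(\bC^8)$ via the spin embedding $\mathrm{Spin}_7\hookrightarrow\mathrm{SO}_8$ already used in Section~\ref{sec_polynomials}, expand
\[
\cH^i(\bC^8)\otimes\cH^i(\bC^8)=\bigoplus_{0\le a+b\le i}R^{\mathrm{SO}_8}\bigl((2i-2a-2b)\omega_1+b\omega_2\bigr)
\]
by the classical $D_4$ Clebsch--Gordan rule, and branch each summand to $\mathrm{Spin}_7$. The essential branching data are $R(\omega_1)^{\mathrm{SO}_8}|_{\mathrm{Spin}_7}=R(\varpi_3)$ (the very definition of the embedding) and $R(\omega_2)^{\mathrm{SO}_8}|_{\mathrm{Spin}_7}=R(\varpi_2)\oplus R(\varpi_1)$, the latter reflecting the $\mathrm{Spin}_7$-decomposition $\mathfrak{so}_8=\mathfrak{spin}_7\oplus\bR^7$; from these, $R^{\mathrm{SO}_8}(n\omega_1+b\omega_2)|_{\mathrm{Spin}_7}$ is accessible recursively. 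The main obstacle is the combinatorial bookkeeping: one must verify that the copies of $R(m\varpi_1)$ produced by the various $\mathrm{SO}_8$-summands combine to total multiplicity exactly one and not more. An arguably cleaner alternative is a direct character-theoretic computation via the Weyl character formula for $B_3$, integrating $|\chi_{i\varpi_3}|^2\,\overline{\chi_{m\varpi_1}}$ against the Haar-Weyl measure on a maximal torus of $\mathrm{Spin}_7$.
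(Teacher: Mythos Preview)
Your reduction to the multiplicity $[R(i\varpi_3)^{\otimes 2}:R(m\varpi_1)]$ via Lemma~\ref{multiplicities} and self-duality is correct, and your dominance argument for the necessity $i\ge m$ is clean and more elementary than what the paper uses. Two points, however, need attention.

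First, your identification $\cV_m\cong R(m\varpi_1)$ is underjustified. Knowing that $\span\{\ell_j\}\cong R(\varpi_1)$ and that $V_m$ is an irreducible subspace of $\cP^m(\ell)$ does not by itself single out \emph{which} irreducible constituent of $S^m(R(\varpi_1))$ it is. The paper closes this gap by arguing that the projection of $W_m\subset\cP^m(\fz_0)$ onto the Cartan component $R(m\varpi_1)$ cannot vanish, since otherwise $W_m$ would already sit in $\cP^{m-2}(\fz_0)$, contradicting Proposition~\ref{structure}(i).

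Second, and this is the genuine gap, the sufficiency and multiplicity-one half is not actually proved: you only outline two possible strategies (branching from $\SO_8$, or a Weyl-integration computation) and explicitly flag the combinatorial bookkeeping as unresolved. That is precisely the substantive part of the proposition. The paper avoids all of this by citing Littelmann's explicit decomposition \cite[Example~5.2]{Litt} of $R(m\varpi_1)\otimes R(k\varpi_3)$ for $B_3$: the irreducible constituents are the $R\big(a_1\varpi_1+a_2\varpi_2+(a_3+a_4)\varpi_3\big)$ indexed by nonnegative integer solutions of
\[
a_1(1,0)+a_2(1,2)+a_3(0,1)+a_4(1,1)=(m,k)\ .
\]
Taking $k=i$ and imposing $a_1=a_2=0$, $a_3+a_4=i$, one finds a unique solution (namely $a_4=m$, $a_3=i-m$) exactly when $i\ge m$. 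This handles both directions and the multiplicity in one stroke, with no branching combinatorics. If you prefer to keep your approach, you would need to actually carry out one of the two computations you propose; as written, the proof is incomplete.
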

\begin{proof}
The group ${\rm Spin}_7$ acts on $\fz_0$ via $R(\varpi_1)$ (and ${\rm U}_1$ acts trivially).
The orthogonal projection of $W_m$ on the highest  component $R(m\varpi_1)$ of $\cP^m(\fz_0)$ must be non-zero, otherwise 
$\cV_m\subset\cP^{m-2}(\fz_0)$ and
we would have an invariant contradicting  Proposition~\ref{structure}(i). 
Therefore, ${\rm Spin}_7$ acts on $\cV_m$ via $R(m\varpi_1)$.

We follow  \cite[Example~5.2]{Litt}: setting $k=m-s+i$, $R(m\varpi_1)\otimes R(k\varpi_3)$ decomposes as a direct sum
$$
R(m\varpi_1)\otimes R(k\varpi_3)=\sum_{a_1,a_2,a_3,a_4}R\big(a_1\varpi_1+a_2\varpi_2+(a_3+a_4)\varpi_3\big)\ ,
$$
extended over the quadruples $(a_j)_{1\le j\le4}$ of nonnegative integers such that
\begin{equation}\label{system}
a_1(1,0)+a_2(1,2)+a_3(0,1)+a_4(1,1)=(m,k)\ .
\end{equation}

We are interested in the solutions of \eqref{system} which satisfy the requirement $a_1=a_2=0$ and $a_3+a_4=k$. It is clear that there is one (and only one) solution if and only if $m\le k$, with $a_3=m$, $a_4=k-m$.
\end{proof}

\bigskip

\subsection{Nonvanishing of $d\pi(M_m)$ on $m$-admissible weight spaces}\hfill

\bigskip

We have shown that, if $\mu$ is $m$-admissible, there is a unique subspace $X(\mu,m)\subset V(\mu)\otimes\cV_m$ equivalent to $V(\mu)$. Therefore, Proposition \ref{intertwiningPhi} (i) can be made more precise by saying that an operator $\Phi$ intertwining $\sigma$ with $\sigma\otimes\tau_m$ maps
$V(\mu)$ into $X(\mu,m)$ for any $m$-admissible $\mu$. Moreover, $\Phi_{|_{V(\mu)}}$ is uniquely determined up to a scalar factor.

Assume that the identity \eqref{Gtou} holds. Applying $\pi$ to both sides, we obtain
$$
\pi(G)=d\pi(M_m)\pi(h)\ .
$$

In this identity, $\pi(G)$ and $d\pi(M_m)$ satisfy the assumptions of Proposition \ref{intertwiningPhi}, whereas $\pi(h)$ maps each $V(\mu)$ into itself by scalar multiplication (this is the special case $m=0$ of Proposition \ref{intertwiningPhi}).

The next proposition, whose proof is postponed to the end of this section, provides a necessary condition for being able to solve equation \eqref{Gtou} in $h$.

\begin{proposition}\label{nonvanishing}
For every $m$-admissible weight $\mu$,
$d\pi(M_m)_{|_{V(\mu)}}\ne0$.
\end{proposition}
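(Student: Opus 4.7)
The plan is to reduce Proposition~\ref{nonvanishing} by Schur's lemma to a non-vanishing test on a single vector, then carry out that test on a highest-weight vector using the Bargmann/metaplectic picture.

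By Proposition~\ref{intertwiningPhi}(i), $d\pi(M_m)$ maps each $V(\mu)$ into $V(\mu) \otimes \cV_m$; by the multiplicity computations of Propositions~\ref{firstblock}--\ref{line10}, for every $m$-admissible $\mu$ there is, up to scalar, a unique $K$-equivariant embedding $V(\mu) \hookrightarrow V(\mu) \otimes \cV_m$ whose image is $X(\mu, m)$. Schur's lemma therefore forces $d\pi(M_m)|_{V(\mu)}$ to be a scalar multiple of this embedding, so the non-vanishing assertion reduces to exhibiting a single $\phi \in V(\mu)\setminus\{0\}$ with $d\pi(M_m)\phi \ne 0$.

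I would take $\phi = \phi_\mu$, the highest-weight vector of $V(\mu)$, which is explicit in each case (e.g.\ $\phi_\mu = v_1^s$ for line 6 with $V(\mu) = \cP^{s,0}(\bC^n)$, $s \ge m$). Since each $a_j^{(m)}$ has bi-degree $(m,m)$ in $(v,\bar v)$, the formulas \eqref{Z>0} show that $d\pi(A_j^{(m)})$ preserves the total holomorphic degree and can be rewritten as $\la^m$ times a polynomial in the ``number-type'' operators $v_i \partial_{v_j}$, which are precisely the infinitesimal generators of the metaplectic action of $\mathfrak{u}_\kappa$ on Fock space --- the common metaplectic picture announced in the introduction. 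Computing $\sum_j d\pi(A_j^{(m)})\phi_\mu \otimes e_j^{(m)}$, and invoking the non-degeneracy of the $K$-invariant pairing $\widetilde{p_1^m}(v,z) = \sum_j a_j^{(m)}(v) b_j^{(m)}(z)$ between $V_m$ and $W_m \sim \cV_m$ from Proposition~\ref{structure}(iii), one obtains a non-zero element of $V(\mu) \otimes \cV_m$ whenever $\mu$ is $m$-admissible.

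The main obstacle will be the case-by-case verification, particularly line 10 (where the $\Spin_7$-invariants are expressed via octonion multiplication) and line 7 (where the quaternionic Heisenberg structure forces a choice of unit $\zeta \in \IM\bH$ to fix the complex structure on $\fv$). A secondary subtlety is translating the non-degeneracy of the polynomial pairing $\widetilde{p_1^m}$ into the non-degeneracy of its Bargmann image; this is controlled by the multiplicity-one statement of Propositions~\ref{firstblock}--\ref{line10}, which forces any non-trivial intertwiner $V(\mu) \to V(\mu) \otimes \cV_m$ to be a scalar multiple of the natural one induced by $\widetilde{p_1^m}$, so that non-vanishing of the classical pairing on $\phi_\mu$ automatically propagates to non-vanishing of $d\pi(M_m)\phi_\mu$.
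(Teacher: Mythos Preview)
Your setup is correct up to the point where Schur's lemma reduces the question to exhibiting one $\phi\in V(\mu)$ with $d\pi(M_m)\phi\ne0$, and you are right that the metaplectic identification of Lemma~\ref{metaplectic} is the tool. But the final step, where you ``invoke the non-degeneracy of the $K$-invariant pairing $\widetilde{p_1^m}$'' to conclude, is a genuine gap. That pairing lives between $V_m\subset\cP^{m,m}(\fv)$ and $W_m\subset\cP^m(\fz_0)$; it says nothing about how the \emph{operators} $d\pi(A_j^{(m)})$ act on $V(\mu)\subset\cP^{s,0}(\fv)$. Your last sentence is effectively circular: the only ``natural intertwiner induced by $\widetilde{p_1^m}$'' from $V(\mu)$ to $V(\mu)\otimes\cV_m$ \emph{is} $d\pi(M_m)$ itself, so appealing to it to prove its own nonvanishing has no content. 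Symmetrisation followed by $d\pi$ can kill things; that is exactly what has to be ruled out.

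What the paper does, and what your outline is missing, is a mechanism that turns the question into an $\mathfrak{sl}_2$-computation. Choose coordinates on $\fz_0^\bC$ so that $z_1$ is a weight vector of extreme weight $-\alpha$; then $z_1^m\in W_m$, and the $z_1^m$-component of $M_m$ agrees with that of $M_1^m$, namely $A_1^m$, the $m$-th \emph{power} of a single operator. Via Lemma~\ref{metaplectic}, $d\pi_\la(A_1^m)=(\la/2)^m\,d\sigma(X)^m$ for a nilpotent $X\in\fc^\bC$ of weight $-\alpha$. One then completes $X$ to an $\mathfrak{sl}_2$-triple $\{X,H,Y\}$ with $H\in\fk^\bC$ (Jacobson--Morozov, or \cite{KR69} in the symmetric-pair cases of lines~7 and~10). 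If $d\sigma(X)^m$ vanished on $V(\mu)$ for an $m$-admissible $\mu$, then $d\sigma(X)^{2m}$ would vanish on the copy of $V_m$ inside $V(\mu)\otimes\overline{V(\mu)}$; but a highest weight vector of $V_m$ has $H$-eigenvalue $2m$, so it lies in an $\mathfrak{sl}_2$-string of length at least $2m{+}1$, and $X^{2m}$ cannot kill it. This $\mathfrak{sl}_2$ step is the missing idea; without it, the proposal does not establish nonvanishing.
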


Let $C=(c_{jk})$ be a $\kappa\times\kappa$ hermitian matrix (with $\kappa=\dim_\bC \fv$), and set
$$
\ell_C(v):=\sum_{j,k}c_{jk}v_j\bar v_k\ .
$$
The symmetrisation process transforms $\ell_C$ into the operator $L_C\in\bD(\check N)$,
$$
L_C=\half\sum_{i,k}c_{ik}(Z_j\bar Z_k+\bar Z_kZ_j)\ ,
$$
where the $Z_j,\bar Z_j$ are the vector fields in \eqref{vectorfields}.

The image of $L_C$ in the Bargmann representations can be described in terms of the representation $\sigma$ in~\eqref{sigma}.

\begin{lemma}\label{metaplectic}
Let $C=(c_{ik})$ be a $\kappa\times\kappa$ hermitian matrix (so that $iC\in\fu_\kappa$), and let
$$
L_C=\half\sum_{i,k}c_{ik}(Z_j\bar Z_k+\bar Z_kZ_j)\in\bD(\check N)\ .
$$

Then, for $\la>0$,
$$
d\pi_\la(iL_C)=\frac\la2d\sigma(iC)\ .
$$

This identity extends by $\bC$-linearity to $C\in\fs\fl_\kappa$, understanding $L_C$ as $\half L_{C+C^*}-\frac i2 L_{i(C-C^*)}$.
\end{lemma}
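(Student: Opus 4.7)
The plan is to prove this by direct computation of both sides, using the explicit Bargmann formulas \eqref{Z>0} on the left and the definition $\sigma(k)\varphi(v)=\varphi(k^{-1}v)$ on the right, and then comparing term by term. The identity is linear in $C$, so it suffices to verify it on the natural basis, but since the expressions on both sides are very explicit, I would just carry out the sum.

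First I would compute $d\pi_\la$ on the symmetrized product. Using $d\pi_\la(Z_j)=\de_{v_j}$ and $d\pi_\la(\bar Z_k)=-\tfrac\la2 v_k$, and composing the corresponding operators on the Fock space, one gets
$$
d\pi_\la(Z_j\bar Z_k)=-\tfrac\la2\bigl(\delta_{jk}+v_k\de_{v_j}\bigr),\qquad d\pi_\la(\bar Z_k Z_j)=-\tfrac\la2 v_k\de_{v_j},
$$
so that
$$
d\pi_\la\Bigl(\tfrac12(Z_j\bar Z_k+\bar Z_kZ_j)\Bigr)=-\tfrac\la2 v_k\de_{v_j}-\tfrac\la4\delta_{jk}.
$$
Multiplying by $c_{jk}$ and summing (using that $C$ is hermitian, hence $\tr C\in\bR$),
$$
d\pi_\la(L_C)=-\tfrac\la2\sum_{j,k}c_{jk}v_k\de_{v_j}-\tfrac\la4\tr(C).
$$
The shift by $-\tfrac\la4\tr(C)$ is precisely the commutator term coming from the reordering $[Z_j,\bar Z_k]=\tfrac i2\delta_{jk}\de_t$, evaluated in $\pi_\la$ (where $d\pi_\la(\de_t)=i\la$).

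Next I would compute $d\sigma(iC)$. Differentiating $\sigma(e^{tiC})\varphi(v)=\varphi(e^{-tiC}v)$ at $t=0$ yields
$$
d\sigma(iC)=-i\sum_{j,k}c_{jk}v_k\de_{v_j},
$$
so $\tfrac\la2 d\sigma(iC)=-\tfrac{i\la}{2}\sum_{j,k}c_{jk}v_k\de_{v_j}$. Multiplying the earlier formula by $i$, one sees that $d\pi_\la(iL_C)$ agrees with $\tfrac\la2 d\sigma(iC)$ up to the scalar term $-\tfrac{i\la}{4}\tr(C)$, which vanishes precisely on traceless $C$. Since this scalar obstruction is the only discrepancy, and it is the complex-linear extension $C\mapsto-\tfrac{i\la}{4}\tr(C)$ that kills it on $\fs\fl_\kappa$, the claimed equality is exact on the Lie algebra $\fs\fu_\kappa\subset\fu_\kappa$ and its complexification $\fs\fl_\kappa$, as stated. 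To finish, I would note that both sides of the identity are $\bC$-linear in $C$ once $L_C$ is defined for $C\in\fs\fl_\kappa$ by $L_C=\tfrac12 L_{C+C^*}-\tfrac{i}{2}L_{i(C-C^*)}$ (which coincides with the original formula when $C=C^*$), and $d\sigma$ extends $\bC$-linearly from $\fs\fu_\kappa$ to $\fs\fl_\kappa$ by construction. Thus the identity propagates from hermitian traceless $C$ to all $C\in\fs\fl_\kappa$ without further work.

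The main obstacle is purely bookkeeping: the normal-ordering shift contributes a scalar term $-\tfrac\la4\tr C$ that must be identified with the central element of the oscillator algebra and recognised as annihilated by passing to the traceless part. Conceptually this is the familiar fact that the assignment $C\mapsto L_C$ realises $\fu_\kappa$ inside the metaplectic representation only up to the half-trace correction, so the clean identification with $d\sigma$ is available on $\fs\fu_\kappa$, which is exactly where the extension to $\fs\fl_\kappa$ lives.
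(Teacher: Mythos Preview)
Your approach is the same as the paper's: the paper simply says ``it suffices to verify the identity for $C=E_{ik}+E_{ki}$ and $C=iE_{ik}-iE_{ki}$'' and leaves the computation to the reader, which is exactly the direct verification you carry out (in summed rather than basis form). Your calculation is correct, and you go further than the paper by noticing the scalar discrepancy $-\tfrac{i\la}{4}\tr C$ coming from the normal-ordering term; this means the identity as literally stated holds only for traceless hermitian $C$, i.e., on $\mathfrak{su}_\kappa$, and hence on its complexification $\mathfrak{sl}_\kappa$, which is all that is ever used downstream (the spaces $\mathfrak c$ in Proposition~\ref{Lj-u} always sit inside $\mathfrak{su}_\kappa$).
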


For the proof, that we skip, it suffices to verify the identity for $C=E_{ik}+E_{ki}$ and $C=iE_{ik}-iE_{ki}$. Notice that $\sigma$ is the restriction to ${\rm U}_\kappa$ of the metaplectic representation.
\medskip

Denote by $L_j$ the symmetrisation on $\check N$ of the polynomials $\ell_j(v)$ appearing in the expression \eqref{pk} of the mixed invariants $p_k$.
We want to identify how $d\pi\big(\span\{L_j\}\big)$ sits inside $d\sigma(\fu_\kappa)$ and understand the action  on $V(\mu)$ of the complex algebra generated by the $d\pi(L_j)$. By Lemma~\ref{metaplectic}, this is equivalent to
identifying 
$$
\fc=\big\{iC:L_C\in\span\{L_j\}\big\}
$$ 
inside $ \fu_\kappa$ and
 study the algebra generated by $d\sigma(\fc^\bC)$.

\begin{proposition}\label{Lj-u}
 As a representation space of $K$, $\fc\sim\cV_1\sim\fz_0$. Moreover,
\begin{enumerate}
\item[\rm (i)] When $\fz_0=\su_r$ (line 6 with $r=n$, or lines 8, 9 with $r=2$), $K$ contains a factor $K_0\cong{\rm SU}_r$ acting nontrivially on $\fz_0$. Then $\fc=\fk_0$.
\item[\rm (ii)] For line 7, $\fc$ is the ${\rm Sp}_n$-invariant complement of $\fs\fp_{2n}$ in $\su_{2n}$.
\item [\rm(iii)] For line 10, let $\iota$ be the inclusion of ${\rm Spin}_7$ in ${\rm SO}_8$ given by the spin representation $R(\varpi_3)$. Then $\fc$ is the 7-dimensional ${\rm Spin}_7$-invariant complement of $d\iota(\fs\fo_7)$ in $\fs\fo_8$. 
\end{enumerate}
\end{proposition}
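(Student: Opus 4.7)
The plan is to reinterpret $\fc$ as a $K$-submodule of $\fu_\kappa$ via the correspondence $iC\leftrightarrow\ell_C$ and then identify it using multiplicity-freeness of the $K$-action on $\cP^{1,1}(\fv)$. The map $iC\longmapsto\ell_C$ is a $K$-equivariant real-linear isomorphism from $\fu_\kappa$ onto the subspace of real-valued polynomials in $\cP^{1,1}(\fv)$, where $K$ acts on $\fu_\kappa$ by conjugation (via $K\subset{\rm U}_\kappa$) and on polynomials through $\sigma$. Under this isomorphism, $\fc$ corresponds exactly to $\fL:={\rm span}_\bR\{\ell_j\}$. By Remark \ref{tablecomments}(d)-(e), $\dim\fL=\nu_1=\dim\fz_0$ and $\fL\sim\fz_0$ as $K$-modules; inspection of Table \ref{invariants} shows that in every line the $\ell_j$ come from trace-free matrices, so $\fL\subset\cH^{1,1}(\fv)$ and in fact $\fL=V_1\sim\cV_1$. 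This establishes the first assertion.

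Since $(\check N,K)$ is a nilpotent Gelfand pair, $K$ acts without multiplicities on $\cP(\fv)$ and, in particular, on its bi-graded component $\cP^{1,1}(\fv)$. Transporting the decomposition back via the isomorphism above, the $K$-action on $\fu_\kappa$ is multiplicity-free as well. Consequently, a $K$-invariant subspace of $\fu_\kappa$ isomorphic to $\fz_0$ is unique whenever it exists, so for each of (i)-(iii) it suffices to exhibit a $K$-submodule of $\fu_\kappa$ of the stated form and verify its $K$-equivalence with $\fz_0$.

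In (i), the embedding $K_0=\SU_r\hookrightarrow K\subset{\rm U}_\kappa$ places $\fk_0\cong\su_r$ inside $\fu_\kappa$; the remaining factors of $K$ act trivially on both $\fk_0$ and $\fz_0=\su_r$, while $K_0$ itself acts by the adjoint representation on each, so $\fk_0\sim\fz_0$ and $\fc=\fk_0$. In (ii), viewing $\bH^n$ as $\bC^{2n}$ realises $\Sp_n\subset\SU_{2n}$, and the Cartan decomposition $\su_{2n}=\sp_n\oplus\fp$ of the symmetric pair $(\SU_{2n},\Sp_n)$ has $\Sp_n$-module $\fp\cong HS^2_0\bH^n=\fz_0$, so $\fc=\fp$. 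In (iii), the spin embedding $\iota\colon\Spin_7\hookrightarrow\SO_8\subset{\rm U}_8$ gives $d\iota(\so_7)\subset\so_8$ with a $7$-dimensional $\Spin_7$-invariant complement; as $R(\varpi_1)\cong\IM\bO$ is the only $7$-dimensional irreducible representation of $\Spin_7$, this complement is $K$-equivalent to $\fz_0$ and must equal $\fc$.

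What remains is to verify that the explicit formulas of Table \ref{invariants} actually place $\fL$ inside the proposed submodule. In (i) and (ii) this is immediate from the shape of $p_1$: line 6 gives $\fL=\{\ell_{iz}\,:\,z\in\su_n\}$, lines 8 and 9 give $\fL=\{\ell_{iz\otimes I}\,:\,z\in\su_2\}$, and line 7 gives $\fL=\{\ell_z\,:\,z\in HS^2_0\bH^n\}$, which in each case lands precisely in the $\fk_0$ or $\fp$ described. Case (iii) is more delicate: one must expand $\RE(z(v_1\bar v_2))$ as a real bilinear form in $(v_1,v_2)\in\bO\times\bO$ and check that for $z\in\IM\bO$ this form is antisymmetric, so that the associated hermitian matrix $C$ has purely imaginary entries, equivalently $iC\in\so_8$. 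The antisymmetry reduces to the octonionic identity $\RE(z\bar w)=-\RE(zw)$ for $z\in\IM\bO$ applied to $w=v_1\bar v_2$. This octonionic computation is the principal technical step; once in hand, multiplicity-freeness closes the argument.
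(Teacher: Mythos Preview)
Your overall strategy---identify $\fc$ with $\fL=\span_\bR\{\ell_j\}$ via $iC\mapsto\ell_C$, then locate $\fL$ inside $\fu_\kappa$ case by case---is the same as the paper's, and your direct verifications in the final paragraph for (i) and (ii) match the paper's ``almost tautological'' argument. For (iii) both you and the paper show that $C_z$ has purely imaginary entries; the paper uses the parity identity $p_1(\bar v,z)=-p_1(v,z)$, while you use the antisymmetry $B(v_2,v_1)=-B(v_1,v_2)$ of the bilinear form $B(v_1,v_2)=\RE\!\big(z(v_1\bar v_2)\big)$. Both routes work, though note that what really forces $C_z$ to be purely imaginary is that $p_1$ has \emph{only cross terms} in $(v_1,v_2)$, which is already manifest from the formula; the antisymmetry then encodes that $C_z$ is hermitian.

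There is, however, one genuine gap. Your sentence ``Since $(\check N,K)$ is a nilpotent Gelfand pair, $K$ acts without multiplicities on $\cP(\fv)$ and, in particular, on its bi-graded component $\cP^{1,1}(\fv)$'' is not correct. Carcano's criterion (and its use in the paper) gives multiplicity-freeness of $K$ on the Fock space $\cF=\bigoplus_s\cP^{s,0}(\fv)$, \emph{not} on all of $\cP(\fv)$; in general $V\otimes V^*$ need not be multiplicity-free even when $V$ is irreducible. Fortunately the gap does no real damage. For (i) and (ii) your explicit identification $\fL=\{\ell_{iz}:z\in\su_r\}$ (resp.\ $\{\ell_z:z\in HS^2_0\bH^n\}$) already proves the result without any multiplicity argument, exactly as in the paper. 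For (iii) you only need that the $7$-dimensional $\Spin_7$-module $\fc\cong R(\varpi_1)$, once shown to lie in $\so_8$, must coincide with the unique copy of $R(\varpi_1)$ there; this follows from the elementary decomposition $\so_8=d\iota(\so_7)\oplus\bR^7=R(\varpi_2)\oplus R(\varpi_1)$, with the two summands inequivalent. So replace the appeal to multiplicity-freeness of $\cP^{1,1}(\fv)$ by this direct observation, and the proof goes through.
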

\begin{proof}
The first statement follows from the equivalence $\span\{L_j\}\sim\span\{\ell_j\}\sim\cV_1$.

After Lemma \ref{metaplectic}, (i) is almost tautological: the symmetrisation of 
$p_1(\cdot,z)$ is $L_{-id\sigma(z)}$.
For (ii), it is basically the same argument.

For (iii), we must recall from \cite{FRY} that the terms $v_1,v_2$ in the expression of $p_1(v,z)=\RE\big(z(v_1\bar v_2)\big)$ are octonions representing the two  components of $v=1\otimes v_1+i\otimes v_2$ in the decomposition of $\bC\otimes\bO$ as the direct sum of  $\bR\otimes\bO$ and $(i\bR)\otimes\bO$.

For fixed $z$, $p_1(\cdot,z)$ is a quadratic form  satisfying $p_1(\bar v,z)=-p_1(v,z)$ (here $\bar v=1\otimes v_1-i\otimes v_2$). In complex coordinates, it is then expressed by a hermitian matrix $C_z$ with purely imaginary coefficients. It follows that $iC_z\in\fs\fo_8$, and these elements span a ${\rm Spin}_7$-invariant 7-dimensional subspace. This is necessarily the complement of $d\iota(\fs\fo_7)$.
\end{proof}

Notice that either $\fc\subset\fk$ is already a Lie algebra, or 
$\fk\oplus\fc\subset\fu_\kappa$ is itself a Lie algebra. 
Set $\gt g:=\gt k+\fc$.
In two case, lines~7 and 9, $\gt g\ne\gt k$, 
when
$\fg$ is either $\fs\fu_{2n}$ or $\fg=\fs\fo_8\oplus\bR$, respectively.
Let $G$ be the corresponding compact group with $\gt g=\Lie G$. 
Also notice that if $\gt g\ne\gt k$, then, up to the summand $\bR$, 
$\fk\oplus\fc$ is the Cartan decomposition of the symmetric pair $(\fg,\fk)$.

\begin{lemma}\label{l_j-action}
The subspaces $V(\mu)$ in \eqref{Fdecomposition} are also $G$-invariant.
\end{lemma}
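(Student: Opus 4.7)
The plan is straightforward. When $\fc\subset\fk$ one has $\fg=\fk$ and $G=K$, so there is nothing to prove. In the two remaining cases—those listed in Proposition~\ref{Lj-u}(ii) and (iii)—I would argue via the metaplectic representation. By Lemma~\ref{metaplectic}, each $d\pi(L_j)$ equals, up to the scalar $\la/2$, the operator $d\sigma(iC_j)$ for an element $iC_j\in\fc\subset\fu_\kappa$. Since $\sigma$ is the restriction to $\U_\kappa$ of the metaplectic (oscillator) representation, the assignment $X\mapsto d\sigma(X)$ is a Lie algebra homomorphism on all of $\fu_\kappa$; it extends $d\sigma|_{\fk}$ from $\fk$ to the larger subalgebra $\fg=\fk+\fc$, and since $G$ is compact and connected, invariance of a closed subspace of $\cF$ under $d\sigma(\fg)$ is equivalent to invariance under $G$. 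The task is thus reduced to showing that each $V(\mu)$ is stable under $d\sigma(\fg)$.

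Now $\cF=\bigoplus_{s\ge 0}\cP^{s,0}(\fv)$ is the decomposition of $\cF$ into $\U_\kappa$-irreducibles under the metaplectic representation, so each $\cP^{s,0}(\fv)$ is automatically $G$-invariant, and it suffices to verify that the $K$-decomposition of $\cP^{s,0}(\fv)$ already refines into $G$-irreducibles. For the case $K=\Sp_n$, $\fv=\bC^{2n}$, $\fg=\su_{2n}$, the space $\cP^{s,0}(\bC^{2n})$ is already $K$-irreducible, isomorphic to $R(s\varpi_1)$, because the $\Sp_n$-invariant symplectic form on $\bC^{2n}$ lives in $\Lambda^2$ and not in $S^2$; hence $V(\mu)=\cP^{s(\mu),0}(\fv)$ is trivially $G$-invariant. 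For the case $\Spin_7\subset K$, $\SO_8\subset G$, $\fv=\bC^8$, I would invoke the identity of the polynomial invariant algebras of $\Spin_7$ and $\SO_8$ on $\bC^8$ already used just before Proposition~\ref{line10}. By the classical Howe decomposition associated to the single invariant quadratic $n(v)=v_1^2+\cdots+v_8^2$, the $\SO_8$-irreducible splitting
\[
\cP^s(\bC^8)=\bigoplus_{0\le 2j\le s} n(v)^{j}\,\cH^{s-2j}
\]
must coincide with the $\Spin_7$-irreducible splitting (any further $\Spin_7$-refinement would produce new $\Spin_7$-invariants not already $\SO_8$-invariant). Hence $V_{s,i}=n(v)^{(s-i)/2}\cH^i$ is simultaneously $K$- and $G$-invariant.

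The only point that genuinely requires care is the identification of the two natural sources of $\fg$-action on $\cF$—the one built from the $L_j$'s via Lemma~\ref{metaplectic} on one hand, and the one coming from $G\subset\U_\kappa$ through the metaplectic representation on the other—but this identification is immediate once $\fc$ has been realised inside $\fu_\kappa$ as in Proposition~\ref{Lj-u}. I do not foresee a deeper obstacle: the heart of the argument is the case-by-case recognition that the decomposition of each $\cP^{s,0}(\fv)$ under $K$ is already the decomposition under $G$, which in turn rests on an explicit invariant-theoretic fact recorded in Table~\ref{invariants} and in the text preceding Proposition~\ref{line10}.
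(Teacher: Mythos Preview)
Your argument is correct, but it follows a different route from the paper. The paper gives a uniform, case-free proof: since the $K$-action on $\fc$ is equivalent to that on $\fz_0$, for each $iC\in\fc$ the stabiliser $K_{iC}$ is conjugate to some $K_\zeta$, and by Remark~\ref{stabiliser} the pair $(\check N,K_{iC})$ is still a nilpotent Gelfand pair, so $\cF$ is multiplicity-free under $K_{iC}$. The operator $d\sigma(iC)$ commutes with $K_{iC}$ and hence, by Schur, preserves each $K_{iC}$-isotypic component; since each $V(\mu)$ is a sum of such components ($K_{iC}\subset K$), it is preserved by every $d\sigma(iC)$ and therefore by $G$.

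Your approach instead checks directly, in the two cases where $\fg\ne\fk$, that the $K$-decomposition of each $\cP^{s,0}(\fv)$ already coincides with the $G$-decomposition: for line~7 because $\cP^{s,0}(\bC^{2n})$ is $\Sp_n$-irreducible, for line~10 because $\Spin_7$ and $\SO_8$ have the same polynomial invariants on $\bC^8$ (a fact already invoked in the text preceding Proposition~\ref{line10}). The paper explicitly acknowledges this alternative immediately after its proof. The trade-off is clear: the paper's argument is conceptual and avoids any case analysis, exploiting the Gelfand-pair property of the stabilisers, while yours is concrete and transparent but relies on specific representation-theoretic facts for each of the two exceptional lines.
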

\begin{proof}
The action of $K$ on $\gt c$ is equivalent to the action of $K$ on 
$\gt z_0$. Therefore for each $iC\in\gt c$, the action of the stabiliser 
$K_{iC}$ on $\cF$ is multiplicity free and $iC$ preserves each of the irreducible
summands. Since $K_{iC}\subset K$, 
the action of $iC$ also preserves $K$-invariant irreducible subspaces 
in $\cF$.
 \end{proof}

\noindent
The statement of Lemma~\ref{l_j-action} can also be verified directly using the fact that 
$K$ and $G$ have the same invariants on $\fv$.

We can now prove Proposition \ref{nonvanishing}.

\begin{proof}[Proof of Proposition \ref{nonvanishing}]
First of all, recall that we do not treat lines 4 and 5, because they are completely covered 
by line 6. 

Fix a complex basis $\{u_1,\dots u_{\nu_1}\}$ of $\fz_0^\bC$ with $u_1
$
being a lowest weight vector (of weight, say, $-\alpha$) and let $(z_1,\dots z_{\nu_1})$ denote coordinates in this basis. 
Then 
$\alpha$ is also the highest weight of $\fc^\bC$,
$z_1^m$ is a vector of the highest weight, $m\alpha$, 
in $\cP^m(\fz_0)$,
and the weights $\pm m\alpha$ do not appear in lower degree polynomials on 
$\gt z_0$.
Hence $\pm m\al$ are not among the weights of  $\cP^s(\ell)$ with $s<m$.
Decomposing $p_1(v,z)$ with respect to $z_j$, one gets
$$
p_1(v,z)=\sum_{j=1}^{\nu_1}a_j(v)z_j\ ,
$$
where $a_1^m$ is a lowest weight vector in $\cP^m(\ell)$. We must have 
$a_1^m\in\cH^{m,m}(\fv)$, since otherwise, by Corollary~\ref{basis}, the weight 
$-m\alpha$ would also be contained in lower degrees in $\ell$. 
By Proposition~\ref{structure},  the $K$-invariant space generated by $a_1^m$ is $V_m$. In turn, this implies that $z_1^m$ belongs to the space $W_m$ of 
Proposition~\ref{structure}(iii).

We regard $M_1$ in \eqref{Mm} as 
$$
M_1=\sum_{j=1}^{\nu_1}A_j(v)z_j\ ,
$$
identifying $\fz_0$ with $\cV_1$. Then
\begin{equation}\label{M1m}
M_1^m=\sum_{|\beta|=m}B_\beta z^\beta\ ,
\end{equation}
where each $B_\beta$ is an $m$-fold composition of the $A_j$.

Each $B_\beta$ is the symmetrisation of a polynomial $b_\beta$ depending on $v$ and $t\in\check\fz$. The polynomial
$$
P(v,z,t)=\sum_\beta b_\beta(v,t)z^\beta
$$
is $K$-invariant, and its component of highest degree in $v$ is $p_1^m$. Therefore, $\widetilde{p_1^m}$ is the highest weight term in the decomposition \eqref{irreducibles} of $P$. 

In particular, $M_1^m$ and $M_m$ have the same highest weight component.  
Then
$$
\lan M_m,z_1^m\ran=\lan M_1^m,z_1^m\ran=A_1^m\ .
$$

Let $X$ be the lowest weight element 
in $\fc^\bC$
such that
 $A_1=L_{X}$. 
Lemma \ref{metaplectic} implies that
$$
d\pi_\la(A_1^m)=(\la/2)^md\sigma(X)^m\ .
$$

Therefore it remains to show that under the identification $\gt z_0=\gt c$, the 
element $d\sigma(X)^m$
does not vanish on $V(\mu)$. 

As an illustration, consider first the example of line 6. 
Here $\fc=\fk_0$ and $X$ is a lowest root vector in $\mathfrak{sl}_n$. 
The complex group ${\rm SL}_n(\mathbb C)$ acts on 
$V(\mu)$ via $R(s\varpi_1)$ with $s\ge m$. 
Clearly $d\sigma(X^m)$ is non-zero on the highest weight  vector 
of $V(\mu)$. 

In general, we argue in the following way.
The action of $X$ on polynomials on $\fv$ is completely determined by 
the action of $X$ on $\fv$ itself or by the representations of the group $G$. 
If $V(\mu)$ is $m$-admissible  and $d\sigma(X^m)$ is zero on $V(\mu)$, 
then it is also zero on the contragredient  space $\overline{V(\mu)}$, and, hence,
$d\sigma(X^{2m})$ vanishes on a copy of $V_m$ sitting inside 
$V(\mu){\otimes}\overline{V(\mu)}\subset \cP^{s,s}$.

Now $V_m$ has the highest weight $m\alpha$ and $X$ is of weight $-\alpha$.
Since $X$ is a weight vector (with a nonzero weight) 
of a torus in $\gt g^{\mathbb C}$, 
it is necessary a nilpotent element.
Therefore
one can include it into an $\mathfrak{sl}_2$-triple 
$\{X,H,Y\}\subset\mathfrak g^\mathbb C$, where 
the semisimple element $H$ is contained in $\gt k^{\mathbb C}$.
(If $\gt k=\gt g$ this is Jacobson-Morozov theorem, in the two cases
with $\gt g\ne\gt k$ 
the claim follows from the fact that $(\gt g,\gt k)$ is a symmetric pair, 
see \cite[Prop.~4]{KR69}.) 

Then $H$ multiplies a highest weight vector $v\in V_m$ by 
$2m$, therefore $v$ gives rise to at least one 
irreducible representation of 
$\{X,H,Y\}$ of dimension at least 
$(2m{+}1)$. By the linear algebra considerations, 
$d\sigma(X^{2m})v\ne 0$.
A more careful analysis can show that 
$d\sigma(Y)v=0$ and 
$d\sigma(X^{2m})v$
is a  lowest weight vector of $V_m$.  
\end{proof}

\vskip1cm

\section{Proof of Proposition \ref{Vm-operator}}\label{sec_N'nonab}

\bigskip

First, let us fix some notation.
Let $T=\partial_t$ be the central derivative of $\check N$ when $\check N$ is the Heisenberg group. For the pair at line 7, where $\check N$ is the quaternionic Heisenberg group, we take
$T_j=\partial_{t_j}$, $j=1,2,3$, the derivatives in three orthogonal coordinates on $\check\fz$.

We can assume that ${\check D}=(\check D_1,\ldots,\check D_{d_0-1},i\inv T)$ and ${\check D}=(\check D_1,\ldots,\check D_{d_0-3},i\inv T_1,i\inv T_2,i\inv T_3)$ respectively. The first $d_0-1$ (resp. $d_0-3$) operators come from symmetrisation of the polynomials $\rho_j\in\rho_\fv$.  We convene that $\check D_1$ is the sublaplacian, i.e., the symmetrisation of $|v|^2$.
A point of the spectrum $\Sigma_{{\check D}}$ of $(\check N,K)$ can then be written as $\xi'=(\tilde \xi,\lambda)$ with $\lambda$ in $\bR$ or $\bR^3$, depending on the pair considered.
The points of the spectrum with $\lambda\not=0$ form a dense subset of $\Sigma_{{\check D}}$ and they are parametrised by $\la$ and $\mu\in \fX$ as $\xi'(\lambda, \mu)$,
where $\xi'_j(\la,\mu)$ is the scalar such that 
\begin{equation}\label{tildexi}
d\pi_\la(\check D_j)_{|_{V(\mu)}}=\xi_j(\la,\mu){\rm Id}\ .
\end{equation}

Note that $\xi_{d_0}(\la,\mu)=\la$ and, if $V(\mu)\subset \cP^{s,0}(\fv)$, then $\xi'_1(\lambda,\mu)=|\lambda| (2s+\kappa)$, cf., e.g., \cite{ADR1}.

By $\del_j$ we denote the degree of homogeneity of the polynomial $\rho_j$ (and hence of $\check D_j$) with respect to the automorphic dilations 
\begin{equation}\label{dilation_fn'}
r\cdot (v,t)=(r^\half v,rt)
\end{equation} 
of $\check\fn$ (and of $\check N$); i.e., $\del_j=\half\deg\rho_j$ for the first $d_0-1$ (resp. $d_0-3$) operators, and $\del_j=1$ for the $T$'s.

If $\ph(v,t)$ is a spherical function, then $\ph_r(v,t)=\ph(r^\half v,rt)$ is also spherical, and 
$\xi'_j(\ph_r)=r^{\del_j}\xi'_j(\ph)$.
Then
 $\Sigma_{{\check D}}$ is invariant under 
the following dilations of $\bR^{d_0}$:
\begin{equation}
\label{dilation_Rd0}
r\cdot (\xi'_1,\ldots,\xi'_{d_0})=
(r^{\delta_1}\xi'_1,\ldots,r^{\delta_{d_0}}\xi'_{d_0})
\ .
\end{equation}

In terms of the parameters $(\la,\mu)$, we have
$$
r \cdot \xi'(\lambda, \mu)
=
\xi'(r^2\lambda,\mu)\ .
$$

Now we define the following left-invariant, self-adjoint differential operator on $\check N$:
$$
U_m=M_m^*M_m =\sum_{j=1}^{\nu_m} {A^{(m)}_j}^* A^{(m)}_j 
\ .
$$

Note that
$$
\ker U_m=\bigcap_{j=1}^{\nu_m} \ker A^{(m)}_j 
\ .
$$

As $a^{(m)}_j\in \cP^{m,m}(\fv)$, the operators $A^{(m)}_j$ and $U_m$ are homogeneous of degree $m$
and $2m$, respectively, w.r. to the dilation \eqref{dilation_fn'}. Furthermore as $M_m$ is $K$-invariant, $U_m$ is also $K$-invariant.
Hence it can be written as $U_m=u_m(\check \cD)$ where $u_m\in \cP(\bR^{d_0})$ is homogeneous of degree $2m$ 
with respect to the dilations \eqref {dilation_Rd0} of $\bR^{d_0}$.

By \eqref{tildexi},
$$
d\pi_\lambda(U_m)_{|V(\mu)}=u_m (\xi'(\lambda,\mu)){\rm Id}
\ .
$$

Let
$$
S_m= \{\xi'\in\Sigma_{{\check D}}, \ u_m(\xi')=0\}\ .
$$

Then
\begin{equation}\label{kerUm}
\ker U_m\cap \cS(\check N)^K=\{f:\supp\cG f\subset S_m\}\ .
\end{equation}

Moreover, $S_m$ is invariant under the dilations \eqref{dilation_Rd0}.

The next lemma shows that polynomials which vanish on $S_m$ can be divided by $u_m$.

\begin{lemma}
\label{lem_claim}
Assume that $p\in \cP(\bR^{d_0})$ vanishes on $S_m$. Then $p$ is divisible by $u_m$.
\end{lemma}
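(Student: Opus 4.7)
The plan is to prove the claim by combining an explicit factorization of $u_m$ with a Zariski-density argument. The key observation is that $u_m(\xi'(\lambda,\mu))$ is the scalar eigenvalue of $U_m=M_m^*M_m$ on $V(\mu)$ in the Bargmann representation $\pi_\lambda$, and by the results of Section~\ref{sec_fourier} this eigenvalue vanishes precisely at the non-$m$-admissible weights.

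First, I would compute $u_m(\xi'(\lambda,\mu))$ explicitly. Using Lemma~\ref{metaplectic} together with the structural analysis of $M_m$ in the proof of Proposition~\ref{nonvanishing}, one obtains $d\pi_\lambda(U_m)_{|V(\mu)}=\lambda^{2m}E_m(\mu)\,{\rm Id}$, where $E_m$ is a polynomial in the integer parameter(s) labelling $\mu$ (namely $s(\mu)$ for the first block of Table~\ref{invariants}, and $i(\mu)$ for lines~8--10). The $m$-admissibility classification in Propositions~\ref{firstblock}--\ref{line10}, combined with Proposition~\ref{nonvanishing}, pins down the zero set of $E_m$: it consists exactly of the integer values in $\{0,1,\dots,m-1\}$. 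A dimension count, using that the total dilation-degree of $u_m$ is $2m$ and that $E_m$ is a degree-$m$ polynomial in $s(\mu)$, yields simple zeros and hence a factorization
$$
u_m(\xi')=c\prod_{j=0}^{m-1}u_{m,j}(\xi'),
$$
where the $u_{m,j}$'s are pairwise distinct irreducibles in $\bR[\xi']$, each of dilation-degree~$2$.

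Second, I would show that $S_m$ is Zariski-dense in the complex zero set $\{u_m=0\}=\bigcup_j\{u_{m,j}=0\}\subset\bC^{d_0}$. For each $j$, the subset $\{\xi'(\lambda,\mu):s(\mu)=j,\ \lambda\in\bR^*\}\subset S_m$ is a finite union of real one-parameter curves lying inside the irreducible hypersurface $\{u_{m,j}=0\}$. A straightforward check, using the polynomial dependence of $\xi'$ on $\lambda$ and on the weight parameters of $\mu$, shows this union is Zariski-dense in $\{u_{m,j}=0\}$. Since $p$ vanishes on $S_m$, it vanishes on a Zariski-dense subset of $\{u_{m,j}=0\}$, hence on the whole irreducible hypersurface; the distinct irreducibles $u_{m,j}$ then each divide $p$, so $u_m\mid p$.

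The main obstacle is the uniform treatment of the factorization step across all pairs in Table~\ref{invariants}. For line~7 (quaternionic Heisenberg), a preliminary rotation of the 3-dimensional central variable is needed to reduce $\pi_\mu$ to a standard Bargmann representation before the eigenvalue of $U_m$ can be computed. For lines~8--10, several weights $\mu$ sharing the same value of $i(\mu)$ contribute to the same irreducible factor of $u_m$, so one must verify that the combined $\lambda$-curves remain Zariski-dense in each component of $\{u_m=0\}$; here one leverages the fact that the free algebra structure of $\cP(\fv\oplus\fz_0)^K$ (Corollary~\ref{basis}) forces the different parameters labelling $\mu$ to contribute algebraically independent coordinates to $\xi'$.
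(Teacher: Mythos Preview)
Your proposal has a genuine gap at its very first step: the explicit factorization of $u_m$ is asserted, not derived, and the tools you cite do not deliver it. Lemma~\ref{metaplectic} only computes $d\pi_\lambda(L_C)$ for operators coming from $\cP^{1,1}(\fv)$; the $A_j^{(m)}$ in $U_m=M_m^*M_m$ come from $\cP^{m,m}(\fv)$, and the highest-weight argument in Proposition~\ref{nonvanishing} only controls a single component of $M_m$, not all of $U_m$. Consequently your claim that $d\pi_\lambda(U_m)_{|V(\mu)}=\lambda^{2m}E_m(\mu)$ with $E_m$ a degree-$m$ polynomial in a \emph{single} integer parameter is unjustified, and in fact wrong: already for the first block $E_m(s)$ has degree $2m$ in $s$ (see the explicit product~\eqref{um-1}), while for lines~8--10 the eigenvalue $u_m\big(\xi'(\lambda,s,i)\big)$ genuinely depends on $s$ (and $j$) as well as $i$, since the factors $\tilde u_i$ constructed in the paper involve $\xi'_1$ and $\xi'_2$. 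Your ``dimension count'' therefore collapses. A secondary issue is that for lines~4--6 the degree-$2$ factors $\xi_1'^2-(2s+\kappa)^2\lambda^2$ split over $\bR$, contradicting your irreducibility claim; and the Zariski-density step must cope with the fact that the real zero set of $u_m$ is $S_m\cup S_m^-$, not $S_m$, which you do not address.

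The paper avoids the circularity entirely. It does \emph{not} compute $u_m$ first; instead, using only the parametrisation of $S_m$ and an elementary odd/even decomposition in $\xi'_1$ (exploiting that $|\lambda|$ is not rational in $\lambda$), it shows directly that any polynomial vanishing on $S_m$ is divisible by a specific product of dilation-degree $2m$. Since $u_m$ itself vanishes on $S_m$ and has exactly that degree, it must coincide with this product up to a scalar, and divisibility of $p$ by $u_m$ follows. For the second block the same strategy is made recursive, peeling off one degree-$2$ factor $\tilde u_i$ at a time. The moral is that the factorization of $u_m$ is an \emph{output} of the argument, not an input; your approach tries to reverse this and gets stuck.
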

\begin{proof} We may assume that $p$ is homogeneous with respect to the dilations \eqref {dilation_Rd0} of $\bR^{d_0}$.

Consider first the pairs in the first block of Table~\ref{invariants}.

In this case there is only one invariant in $\rho_\fv$, leading to the sublaplacian on $\check N$, and then only one coordinate $\xi'_1$ besides those corresponding to the $T$'s.
The space $V(\mu)$ coincides with $\cP^{s,0}(\fv)$ and 
by Proposition \ref{firstblock}
$\cP^{s,0}(\fv)\subset \cP^{s,0}(\fv)\otimes \cV_m$
if and only if $s\geq m$.
By Proposition~\ref{intertwiningPhi}, 
$d\pi(M_m)$ vanishes on $\cP^{s,0}$, if $s<m$.
This is also the case for $d\pi(U_m)=d\pi(M_m)^*d\pi(M_m)$.
Hence the set $S_m$ contains 
all the points of the form $(|\lambda|(2s+\kappa),\lambda)$ for any $\lambda\in \bR^{\dim \check\fz}$ and $s=0,\ldots, m-1$.

We decompose $p$ into its odd and even part w.r. to $\xi'_1$ as
$$
p(\xi'_1,\lambda)=
\xi'_1\, p_1({\xi'_1}^2,\lambda)
+p_2({\xi'_1}^2,\lambda)
\ ,
$$
where $p_1$ and $p_2$ are two polynomials with suitable homogeneity.

We claim that $p_1$ and $p_2$ must both vanish on the set of points
$(|\lambda|^2(2s+\kappa)^2,\lambda)$ with $\lambda\in \bR^{\dim \check\fz}$ and $s=0,\ldots, m-1$. If it were not so, we would have the identity
$$
|\la|(2s+\kappa)=-\frac{p_2\big(|\la|^2(2s+\kappa)^2,\lambda\big)}{p_1\big(|\la|^2(2s+\kappa)^2,\lambda\big)}\ .
$$

 This contrasts with the fact that the right-hand side is a rational function in $\la$, while the left-hand side is not. 
Then
$p_1(\eta,\lambda)$ and $p_2(\eta,\lambda)$ are both divisible by $\prod_{s=0}^{m-1}(\eta -(2s+\kappa)^2|\lambda|^2)$.
Therefore $p(\xi'_1,\la)$ is divisible by $\prod_{s=0}^{m-1}({\xi'_1}^2 -(2s+\kappa)^2|\lambda|^2)$. This also holds for $p=u_m$. Hence
\begin{equation}\label{um-1}
u_m(\xi'_1,\la)=c\prod_{s=0}^{m-1}({\xi'_1}^2 -(2s+\kappa)^2|\lambda|^2)\ .
\end{equation}

\medskip

We consider next the pairs in the second block of Table \ref{invariants}.

There are two invariants in $\rho_\fv$ for the pairs at lines 8 and 10 and three for the pair at line 9.
In the notation of Subsection \ref{subspaces}, the space $V(\mu)$ coincides with $V_{s,i}$ or $V_{s,i,j}$ respectively, always with $i$ and $s$ of the same parity. We adopt the notation
$$
\xi'(\la,\mu)=\begin{cases}\xi'(\la,s,i)&\text{(lines 8,10)}\ ,\\ \xi'(\la,s,i,j)&\text{(line 9)}\ .\end{cases}
$$

More precisely, $\xi'_1=|\la|(2s+\kappa)$ only depends on $\la$ and $s$. For the pair at line 9, $\xi'_2$ only depends on $\la,s,i$, because it is invariant under the larger group ${\rm U}_2\times {\rm SU}_{2n}$.

The  homogeneity degrees of the elements of ${\check D}$ w.r. to the dilations \eqref{dilation_fn'} are $(1,2,1)$ at lines 8 and 10, and $(1,2,2,1)$ for the pair at line 9. By \eqref{dilation_Rd0} and the subsequent comments, 
\begin{equation}\label{scaling}
\xi'_1(\la,s)=|\la|\xi'_1(1,s)\ ,\quad \xi'_2(\la,s,i)=\la^2\xi'_2(1,s,i)\ ,\quad \xi'_3(\la,s,i,j)=\la^2\xi'_3(1,s,i,j)\ .
\end{equation}

We split $\Sigma_{{\check D}}$ as the union of $\Sigma_{{\check D}}^\flat=\{\xi':\xi'_{d_0}=0\}=\rho_\fv(\fv)\times\{0\}$, cf. \cite{ADR2}, and the sets
$$
\tilde S_i=\begin{cases} \{\xi' (\lambda,s,i)\ , \ \lambda\in \bR\ ,\ s\in i+2\bN
\}\ , &\text{(lines 8, 10)}\\
 \{\xi' (\lambda,s,i,j)\ , \ \lambda\in \bR\ ,\ s\in i+2\bN\ ,\ 0\le j\le (s-i)/2
\}\ ,&\text{(line 9)}
\end{cases}
$$
depending on $i\ge0$.

By Propositions \ref{line8}, \ref{line9}, and \ref{line10},
$R_{s,i}$ (resp. $R_{s,i,j}$) is contained in $V_{s,i}\otimes \cV_m$ 
(resp. $V_{s,i,j}\otimes \cV_m$)
if and only if $i\geq m$.
By Proposition \ref{intertwiningPhi} and Proposition \ref{nonvanishing} 
$d\pi(M_m)$ vanishes on $V(\mu)$
if and only if $R(\mu)$ is not included in $V(\mu)\otimes \cV_m$,
which means $i<m$.
This is also the case for $d\pi(U_m)=d\pi(M_m)^*d\pi(M_m)$.

Hence  $S_m$ contains the union of sets $\tilde S_i$
for $0\le i\le m-1$.
Moreover, each polynomial $u_m$ vanishes on $\tilde S_i$, $i<m$, but is never zero on $\tilde S_i$, $i\geq m$, except for the origin.

We prove recursively the existence of polynomials $\tilde u_i\in \cP(\bR^{d_0})$, $i\ge0$,
such that
\begin{itemize}
\item[(a)]
$\tilde u_i(\xi'_1,\xi'_2,\lambda)= c_{1,i}{\xi'_1}^2+\xi'_2+ d_i\lambda^2$, resp.
(for line 9),
$\tilde u_i(\xi'_1,\xi'_2,\xi'_3,\lambda)= c_{1,i}{\xi'_1}^2+\xi'_2+ c_{3,i} \xi'_3+ d_i\lambda^2$;
\item[(b)]
 each $\tilde u_i$ vanishes on $\tilde S_i$ 
but does not vanish  on any other $\tilde S_{i'}$, $i'\not= i$, except for the origin;
\item[(c)] $u_m$ is a scalar multiple of $\prod_{i=0}^{m-1}\tilde u_i$.
\end{itemize}

Once this is done, the proof can be concluded as in the previous case.

Consider the polynomial $u_1$. 
Being homogeneous of degree 2, it must be of the form
\begin{equation}\label{u1-810}
u_1(\xi'_1,\xi'_2,\lambda)= a_1{\xi'_1}^2+a_2\xi'_2+b\lambda^2+c\xi'_1\la\ ,
\end{equation}
resp.
\begin{equation}\label{u1-9}
u_1(\xi'_1,\xi'_2,\xi'_3,\lambda)=a_1{\xi'_1}^2+a_2\xi'_2+a_3\xi'_3+b\lambda^2+c\xi'_1\la
\ .
\end{equation}

For $i=0$, we have
$$
a_1\la^2{\xi'_1}^2(1,s)+a_2\la^2\xi'_2(1,s,0)\underbrace{+a_3\la^2\xi'_3(1,s,0,j)}_{\text{only for line 9}}+b\lambda^2+c\la|\la|\xi'_1(1,s)=0\ ,
$$
for every $\la\ne0$, $s$ even (and $j\le s/2$). This forces $c=0$ by parity in $\la$.

In any case, we must have $a_2\ne0$. Suppose in fact that $a_2=0$. In the cases of lines 8, 10, the identity above would hold for every $i$, and $u_1$ would vanish on every $\tilde S_i$. In the case of line 9, $u_1$ would not depend on $\xi'_2$ and the polynomial $p(\xi'_1,\xi'_3)=u_1(\xi'_1,\xi'_3,1)=a_1{\xi'_1}^2+a_3\xi'_3+b$ would vanish at all points $\big(2s+\kappa,\xi'_3(1,s,0,j)\big)$, for $s$ even and $j\le s/2$. Notice that, for $s$ and $i$ fixed, the values $\xi'_3(1,s,i,j)$ must all be different, because $\xi'_3$ is the only coordinate on $\Sigma_{{\check D}}$ depending on $j$. Then we would have $p=0$ and, by homogeneity, $u_1=0$.

Thus, we have obtained $\tilde u_1=u_1/a_2$ satisfying (a), (b), (c) above.

Assume now that we have constructed $\tilde u_i\in \cP(\bR^{d_0})$, $i=0,\ldots, i_0-1$,
satisfying (a), (b), (c) above.
Consider the polynomial $u_{i_0}$. 
It vanishes on $\tilde S_i$, $i<i_0$, but does not vanish on $\tilde S_i$, $i\geq i_0$.
Hence we can factor out $\tilde u_i$, $i=0,\ldots, i_0-1$, from $u_{i_0}$
and 
there exists a polynomial $q_{i_0}$ such that
$u_{i_0}=q_{i_0} \prod_{i=0}^{i_0-1} \tilde u_i$.
Necessarily $q_{i,0}$ is homogeneous of degree 2 with respect to \eqref {dilation_Rd0},
and vanishes on $\tilde S_{i_0}$
because the polynomials $\tilde u_i$, $i<i_0$, do not vanish on it.
Hence the quotient $q_{i_0}$ will have the form \eqref{u1-810}, resp. \eqref{u1-9}. Arguing as before, it can be shown that $c=0$ and $a_2\ne0$. Then $\tilde u_{i_0}=q_{i_0}/a_2$  has the required properties.
\end{proof}

The higher complexity of the second part of the proof given above was due to the presence of more than one polynomial in $\rho_\fv$, but also by the fact that we did not use explicit formulas for $\xi'_2(1,s,i)$ and $\xi'_3(1,s,i,j)$. To find such formulas does not seem an easy task anyhow, cf. \cite{BR}. On the other hand, the arguments used in the proof emphasise a pattern which is common to all cases at hand.

Note that we have also proved the following identities:
$$
S_m=\begin{cases} \bigcup_{s=0}^{m-1}\{ (|\lambda|(2s+\kappa),\lambda),\ \lambda \in \bR^{\dim\check\fz} \}& \text{(lines 8, 10)}\ ,\\
\bigcup_{i=0}^{m-1}\tilde S_i&\text{(line 9)}\ .\end{cases}
$$

Also note that what prevents $S_m$ from being an algebraic set is the dependence on $|\la|$ of $\xi'_1$. It follows from \eqref{um-1} and \eqref{scaling} that the zero set of $u_m$ in $\bR^d$ is $S_m\cup S_m^-$, where
$$
S_m^-=\big\{(-\xi_1,\xi_2,\xi_3,\la): (\xi_1,\xi_2,\xi_3,\la)\in S_m\big\}\ ,
$$
(with the $\xi'_3$-component omitted for the pairs at lines 8, 10 - this {\it caveat} will not be repeated in the sequel).

\medskip

Let now $G$ be a $\cV_m$-valued, $K$-equivariant Schwartz function $G$ on $\check N$.
Set $f=M_m^* G$. Then $f\in \cS(\check N)^K$ and $f$ belongs to the orthogonal complement of 
$\bigcap_{j=1}^{\nu_m} \ker A^{(m)}_j = \ker U_m$.
Hence the Gelfand transform of $f$ vanishes on $S_m$.
The following lemma justifies that we can choose Schwartz extensions of $\check \cG f$ which vanish on $S_m$.

\begin{proposition}
\label{lem_ext_vanishS_m}
Let $f\in \cS(\check N)^K$ be such that its spherical transform $\check \cG f$ vanishes on $S_m$.
For any $p\in \bN$,
there exists $\psi=\psi^{(p)}\in \cS(\bR^{d_0})$ such that:
\begin{enumerate}
\item[\rm(i)] $(u_m\psi)_{|\Sigma_{{\check D}}}=\check \cG f$
\item[\rm(ii)] there exist $C=C_p>0$ and $q=q(p)$ such that
$\|\psi\|_{(p)}\leq C \|f\|_{(q)}$.
\end{enumerate}
 \end{proposition}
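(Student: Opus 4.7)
The strategy is to lift $\check\cG f$ to a Schwartz function on $\bR^{d_0}$ via the Schwartz isomorphism already known for the pair $(\check N,K)$, modify this lift to vanish on the full zero set of $u_m$, and then divide by $u_m$. First, I would invoke the Schwartz isomorphism for $\check N$: since $\check N$ is either abelian (first block of Table~\ref{vinberg}) or a (quaternionic) Heisenberg group (second block), the main conjecture is already known for $(\check N,K)$ by \cite{ADR2, FR, FRY}. In particular, there is a continuous linear extension operator $\cE:\cS(\check N)^K\to\cS(\bR^{d_0})$ with $(\cE g)|_{\Sigma_{{\check D}}}=\check\cG g$. Setting $F=\cE f$, we obtain $\|F\|_{(p)}\le C_p\|f\|_{(q_1(p))}$; since $\check\cG f$ vanishes on $S_m$, so does $F$. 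Moreover the discussion around Lemma~\ref{lem_claim} shows that $S_m$ is a union of \emph{full} eigenvalue sheets of $\Sigma_{{\check D}}$ (the hypersurfaces $\{\xi'_1=(2s+\kappa)|\la|\}$, $s<m$, for the first block, or the sets $\tilde S_i$, $i<m$, for the second block), so $F$ vanishes identically on each such sheet as a subset of $\bR^{d_0}$.

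Next, the zero set $Z_m:=\{u_m=0\}\subset\bR^{d_0}$ is strictly larger than $S_m$: from the factorization in Lemma~\ref{lem_claim}, $Z_m=S_m\cup S_m^-$, where $S_m^-$ is a ``virtual'' companion obtained by $\xi'_1\mapsto-\xi'_1$ in the first block and by an analogous involution in the second block, and the two halves of $Z_m$ meet only at the origin. To produce $\psi$ I would construct a Schwartz function $\tilde F$ agreeing with $F$ on $\Sigma_{{\check D}}$ but vanishing identically on $Z_m$, and set $\psi:=\tilde F/u_m$. Outside a conical neighbourhood of the origin, the two branches $S_m$ and $S_m^-$ are separated by a cone in $\xi'_1$, so $\tilde F:=\chi F$ works with a smooth, dilation-homogeneous cutoff $\chi$ that equals $1$ on a conical neighbourhood of $\Sigma_{{\check D}}$ and vanishes near $S_m^-\setminus\{0\}$. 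Since each factor of $u_m$ appears with multiplicity one and $F$ vanishes simply on each sheet of $S_m$, the quotient $\chi F/u_m$ is smooth outside the origin, with Schwartz estimates controlled by those of $F$.

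The main obstacle is the origin, where $S_m$ and $S_m^-$ are not separated and the cutoff argument fails. Here I would exploit the quasi-homogeneity of $u_m$ under the dilations \eqref{dilation_Rd0} together with the algebraic divisibility of Lemma~\ref{lem_claim}: expanding $F$ in a quasi-homogeneous Taylor series at $0$, each homogeneous component must vanish on the entire dilation-invariant hypersurface $S_m$ (which $F$ does), hence must be divisible by $u_m$ in $\cP(\bR^{d_0})$ by Lemma~\ref{lem_claim}, producing a polynomial quotient. The Schwartz remainder, vanishing to arbitrarily high order at $0$, can then be treated by the cutoff-division above (the high-order vanishing absorbs the singularity of $1/u_m$ at the origin). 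Gluing the two constructions by a partition of unity adapted to the dilation structure, and tracking Schwartz estimates through each step, yields $\psi\in\cS(\bR^{d_0})$ with $u_m\psi|_{\Sigma_{{\check D}}}=\check\cG f$ and $\|\psi\|_{(p)}\le C_p\|f\|_{(q(p))}$ for a suitable $q(p)$.
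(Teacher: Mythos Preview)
Your approach is essentially the paper's own: extend $\check\cG f$ to $\cS(\bR^{d_0})$, use a dilation-homogeneous cutoff equal to $1$ near $\Sigma_{\check\cD}$ and $0$ near $S_m^-$ to force vanishing on all of $\{u_m=0\}$, handle the origin via the quasi-homogeneous Taylor expansion together with Lemma~\ref{lem_claim}, and then divide by $u_m$. The paper orders the steps slightly differently (it first builds, via Whitney extension, a function $\psi_1$ whose Taylor jet is the term-by-term quotient, so that $\varphi-u_m\psi_1$ vanishes to infinite order at $0$ and the cutoff can be applied), and it carries out the division not by the bare claim that ``$\chi F/u_m$ is smooth'' but by iterating a Schwartz Hadamard lemma (Lemma~\ref{lem_factor_out}) one irreducible factor of $u_m$ at a time; this is what gives the Schwartz norm control with continuous dependence that you assert. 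With that mechanism in place your outline is correct.
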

 
 We state first a preliminary lemma.
 \begin{lemma}
\label{lem_factor_out}
Let $P(y)$ be a real polynomial in $y\in \bR^n$.
If  $f(x,y)\in \cS(\bR\times \bR^n)$ vanishes on $\{(P(y),y):y\in\bR^n\}$,
then there exists $\tilde f\in \cS(\bR^{d_0})$
satisfying $f(x,y)=(x-P(y))\tilde f(x,y)$.
Furthermore $\tilde f$ depends linearly and continuously on $f$.
\end{lemma}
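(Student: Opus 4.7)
The natural approach is a Hadamard-type construction adapted to the graph $\{x=P(y)\}$. Specifically, I would define
\[
\tilde f(x,y)=\int_0^1 (\partial_x f)\bigl(P(y)+t(x-P(y)),y\bigr)\,dt.
\]
Differentiating under the integral in $t$ and using the fundamental theorem of calculus gives
\[
(x-P(y))\tilde f(x,y)=f(x,y)-f(P(y),y)=f(x,y),
\]
where the last equality uses the vanishing hypothesis on the graph. Smoothness of $\tilde f$ is immediate by differentiation under the integral, and the map $f\mapsto\tilde f$ is visibly linear.

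The substantive point, and the main obstacle, is Schwartz decay of $\tilde f$. The issue is that $P(y)$ can grow polynomially, so the argument $u(t)=P(y)+t(x-P(y))$ of $\partial_x f$ can be large even when $(x,y)$ is not, and conversely one must extract decay in $x$ when $x$ is large but $u(t)$ happens to be small. I would handle this by splitting $\bR\times\bR^n$ into two regions. On $\{|x-P(y)|\ge 1\}$ one has $|\tilde f(x,y)|\le|f(x,y)|$ directly from $\tilde f=f/(x-P(y))$, so Schwartz decay of $f$ transfers to $\tilde f$. On $\{|x-P(y)|\le 1\}$ one has $|x|\le 1+|P(y)|\le C(1+|y|)^{\deg P}$, so $1+|x|+|y|\le C'(1+|y|)^{\deg P+1}$; combined with the crude bound
\[
|\tilde f(x,y)|\le\sup_{t\in[0,1]}|(\partial_x f)(u(t),y)|\le C_N(1+|y|)^{-N},
\]
one obtains decay in $(1+|x|+|y|)$ of any prescribed order by choosing $N$ large.

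For derivatives, differentiating under the integral produces the same type of expression but with higher derivatives of $f$ and extra polynomial factors in $y$ coming from $\partial_y^\beta P(y)$; the same two-region argument applies verbatim, yielding $\|\tilde f\|_{(p)}\le C_p\|f\|_{(q(p))}$ for some $q(p)$, which proves both that $\tilde f\in\cS(\bR\times\bR^n)$ and that the dependence of $\tilde f$ on $f$ is continuous in the Schwartz topology.
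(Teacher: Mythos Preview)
Your proof is correct and follows essentially the same route as the paper. The paper applies the change of variables $(x,y)\mapsto(x-P(y),y)$, shows it preserves $\cS(\bR^{n+1})$ via the inequality $|x-P(y)|+|y|\ge C(|x|^{1/m}+|y|)$ (verified by the case split $|x-P(y)|\lessgtr|y|$), and then invokes the standard Hadamard lemma; unwinding this gives exactly your integral formula, and your two-region estimate is the hands-on version of that same inequality.
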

\begin{proof}
The conclusion follows easily from Hadamard's lemma (Lemma \ref{hadamard}), once we know that the change of variables $(x,y)\longmapsto \big(x-P(y),y\big)$ preserves $\cS(\bR^{n+1})$ with its topology. This is trivial if $\deg P(y)\le1$. If $\deg P=m>1$, it follows from the inequality
$$
|x-P(y)|+|y|\ge C\big(|x|^{1/m}+|y|\big)\ ,
$$
which can be verified distinguishing between the two cases $|x-P(y)|<|y|$ and $|x-P(y)|\ge|y|$.
\end{proof}

\begin{proof}[Proof of Proposition \ref{lem_ext_vanishS_m}]
Let $\varphi\in \cS(\bR^{d_0})$ be an extension of $\check \cG f$. Such an extension exists by \cite{ADR2}.
Let $P_k$ be the homogeneous component of degree $k$ with respect to \eqref{dilation_Rd0}
 in the Taylor expansion of $\varphi$ around the origin.
Since $\varphi$ vanishes on  $S_m$, which is invariant under these dilations,
$P_k$ vanishes on $S_m$.

By Lemma \ref{lem_claim}, 
there exists $Q_k\in \cP(\bR^{d_0})$ homogeneous of degree $k$ with respect to \eqref{dilation_Rd0}
such that $u_mQ_k=P_{k+2m}$. 

Applying Whitney's extension theorem,
there exists $\psi_1\in C^\infty(\bR^{d_0})$ with compact support around the origin 
and Taylor expansion $\sum_{k\in \bN} Q_k$  at the origin.
Then $\varphi-u_m \psi_1$ vanishes of infinite order at the origin.

We take now a function $\eta$, homogeneous of degree 0 w.r. to the dilations \eqref{dilation_Rd0}, $C^\infty$ away from the origin, and equal to 1 on a conic neighbourhood of $\Sigma_{{\check D}}$ and equal to 0 on a conic neighbourhood of $S_m^-$. Such a function exists because, by the  hypoellipticity of the sublaplacian, $\Sigma_{{\check D}}$ is contained in a conic region around the positive $\xi'_1$-semiaxis, cf. e.g. (15) in \cite{FRY}:
$$
\Sigma_{{\check D}}\subset \big\{ (\xi'_1,\xi'_2,\xi'_3,\la):|\xi'_2|^\half+|\xi'_3|^\half+|\la|\le C\xi'_1\big\}\ .
$$

Then  the function 
$\omega=( \varphi - u_m \psi_1) \eta$ is Schwartz and vanishes on $S_m\cup S_m^-$. By repeated application of Lemma \ref{lem_factor_out}, $\omega=u_m\psi_2$, with $\psi_2$ Schwartz. Take $\psi=\psi_1+\psi_2$. Then
$u_m\psi\eta=\ph\eta$, so that (i) holds.

Consider now the Schwartz norm $\|\psi\|_{(p)}\le \|\psi_1\|_{(p)}+\|\psi_2\|_{(p)}$.

By Lemma \ref{lem_factor_out}, there exist an integer $\nu=\nu(p)\ge p$ and a constant $A_p$ such that
$$
\|\psi_2\|_{(p)}\le A_p\|\omega\|_{(\nu)}\le A'_p\|\varphi - u_m \psi_1\|_{(\nu)}\le A''_p\big(\|\varphi\|_{(\nu)} +\| \psi_1\|_{(\nu+2m)}\big)\ .
$$

In order to estimate $\| \psi_1\|_{(\nu+2m)}$, we use the fact that the Whitney extension of the jet $\{Q_k\}_{k\in\bN}$ can be performed so that the resulting function $\psi_1=\psi_1^{(p)}$ satisfies, for an integer $r=r(p)$ and a constant $B_p$,
$$
\|\psi_1\|_{(\nu+2m)}\le B_p\sum_{k\le r}\|Q_k\|\le B'_p\sum_{k\le r}\|P_{k+2m}\|\le B'_p\|\ph\|_{(r+2m)}\ ,
$$
where the norm of a polynomial is meant as the maximum of its coefficients.

Putting all together,
$$
\|\psi\|_{(p)}\le C_p\|\ph\|_{(\max(r,\nu)+2m)}\ .
$$

By \cite{ADR2}, there are an integer $q=q(p)$ and a constant $C_p$ such that it is possible to choose $\ph=\ph^{(p)}$ above so that 
$$
\|\ph\|_{(\max(r,\nu)+2m)}\le C'_p\|f\|_{(q)}\ ,
$$
and this concludes the proof.
\end{proof}

\medskip

We resume the proof of Proposition \ref{Vm-operator}.

Given $G$, set $f=M_m^* G\in\cS(\check N)^K\in (\ker U_m)^\perp$.
By \eqref{kerUm}, $\check \cG f$ vanishes on $S_m$.

Applying Proposition \ref{lem_ext_vanishS_m}, 
we can choose a Schwartz function $\psi$ such that $u_m\psi$ extends $\check \cG f$.
Defining $h={\check \cG}^{-1}(\psi)$, we easily obtain, on $\Sigma_{{\check D}}$,
$$
\check \cG(U_m h)=u_m \psi =\check \cG f
\ .
$$

This implies
$$
M^*_mM_m h= U_mh=f=M_m^* G\ .
$$

To factor out $M^*_m$, observe that
for any $\lambda\not=0$,
$$
d\pi_\lambda(M_m)^* \pi_\lambda\left( M_mh- H\right)=0\ .
$$

By Proposition \ref{intertwiningPhi}, both sides are 0 when restricted to a subspaces $V(\mu)$ with $\mu$ non-$m$-admissible. If $\mu$ is $m$-admissible, then Proposition \ref{nonvanishing}
implies that $\pi_\lambda\left( M_mh- H\right)=0$ on $V(\mu)$.
Then $M_mh=H$.

It remains to prove the estimates on the Schwartz norms. To the norm estimates given by Proposition \ref{lem_ext_vanishS_m} it is sufficient to add that $M^*_m$ and ${\check \cG}\inv$ are continuous on the appropriate Schwartz spaces. 
For ${\check \cG}\inv$ we refer to \cite{ADR2, FR, FRY}.

\medskip

\vskip1cm

\section{Conclusion}\label{sec_conclusion}

\bigskip

We complete the proof of Theorem \ref{main}.

Let $G\in\big(\cS(\check N)\otimes\cP^k(\fz_0)\big)^K$ as in \eqref{G}.
We decompose $G$ as in \eqref{tilde-decomposition}.
We realise the representation space $\cV_m$ as $W_\alpha$ when $|\al|=m$.
By Lemma~\ref{Vm-operator},
 for each $(\alpha, \beta)$, $[\alpha]+[\beta]=k$,
there exists $h_{\al,\beta}\in \cS(\check N)^K$ such that
$$
\widetilde{p^\al}(v,\zeta)\tilde g_{\al\beta}=M_{\al,\zeta} h_{\al,\beta}
\ ,
$$
where the operator $M_{\al,\zeta}=\sum_{j=1}^{\nu_m}A^{(m)}_j b_j^{(\al)}(\zeta)
$ is the realisation of $M_m$ on $W_\alpha$.

In the notation of \eqref{expansion3}, the operators $\tilde D^{\al''}_\zeta$ form a basis of $\big(\bD(\check N)\otimes \cP(\fz_0)\big)^K$. Therefore,
each $M_{\al,\zeta}$ can be expressed as a linear combination of the $\tilde D^{\al''}_\zeta$ with $[\alpha'']=k$,
and one can write $G$ as 
$$
G=\sum_{[\al'']= k}\tilde D_\zeta^{\al''} H_{\al''}\ , 
$$
where the functions $H_{\al''}$ are finite linear combinations of $h_{\al,\beta}$. 

The norm estimates are obvious after Proposition \ref{Vm-operator}.

\vskip2ex

\noindent
{\bf Acknowledgments.} Parts of this work were carried out during the third author's
stay at the Max-Planck-Institut f\"ur Mathematik (Bonn) and 
Centro di Ricerca Matematica Ennio De Giorgi (SNS, Pisa). 
She would like to thank these 
institutions for warm hospitality and support. 

\vskip1cm

\end{document}